\newtheorem{theorem}{Theorem}[section]
\newtheorem{lemma}{Lemma}[section]
\newtheorem{remark}{Remark}[section]
\newenvironment {proof} {\noindent {\bf Proof.}}{\quad $\blacksquare$\par\vspace{3mm}}
\newcommand{\be}{\begin{equation}}
\newcommand{\ee}{\end{equation}}
\newcommand\bes{\begin{eqnarray}}
\newcommand\ees{\end{eqnarray}}
\newcommand{\bess}{\begin{eqnarray*}}
\newcommand{\eess}{\end{eqnarray*}}
\begin{document}
\setlength{\baselineskip}{15.2pt} \pagestyle{myheadings}

\title{ \bf  \LARGE Wellposedness of solution for an $N$-D chemotaxis-convection model during tumor angiogenesis}
\date{\empty}
\author{
\sl{Fengxiang Zhao $^1$, \ Jiashan Zheng $^2$,\  Kaiqiang Li\ $^{3}$\thanks{Corresponding author.}}\\
{ \normalsize School of Mathematical and Informational Sciences, Yantai University}\\
{ \normalsize Yantai, 264005, Shandong, P. R. China}\\
{}\\
}
\footnotetext[1]{Email address: zfx2037@163.com}
\footnotetext[2]{Email address: zhengjiashan2008@163.com}
\footnotetext[3]{Email address: kaiqiangli19@163.com}
\maketitle

\begin{quote}
\noindent {\bf Abstract.} {In this paper, we consider the following parabolic-parabolic-elliptic system }
\begin{align*}
\left\{\aligned
& u_t=\Delta u-\nabla\cdot(u\nabla v)+\xi\nabla\cdot(u\nabla w)+au-\mu u^{\alpha}, && x\in\Omega, t>0,\\
& v_t=\Delta  v+\nabla\cdot(v\nabla w)-v+u,&& x\in\Omega, t>0,\\
& 0=\Delta w-w+u,&& x\in\Omega, t>0\\
\endaligned\right.
\end{align*}
on a bounded domain $\Omega\subset \mathbb{R}^{N}$ ($N\geq1$) with smooth boundary $\partial \Omega$, where $\mu$, $a$, $\alpha$ are positive constants and $\xi\in\mathbb{R}$. If one of the following cases holds:\\
(i) $N\geq4$ and $\alpha>\frac{4N-4+N\sqrt{2N^2-6N+8}}{2N}$;\\
(ii) $N=3$, $\alpha>2$, for any $\mu>0$ or $\alpha=2$, the index $\mu$ should be suitably big;\\
(iii) $N=2$, $\alpha\geq2$, for any $\mu>0$.\\
Without any restriction on the index $\xi$, for any given suitably regular initial data, the corresponding Neumann initial-boundary problem admits a unique global and bounded classical solution.

\noindent {\bf AMS subject classifications 2020:} {92C17, 35K55, 35K59, 35K20.}

\noindent {\bf Keywords:} {Tumor angiogenesis; classical solution; boundedness; chemotaxis-convection}
\end{quote}

\newcommand\HI{{\bf I}}
\section{Introduction and main result}
\setcounter{equation}{0}
In this paper, we consider a chemotaxis system, which is a simplified representation of the initial phase of tumor-related angiogenesis and provides insights into the complex biological processes of tumor-induced angiogenesis in the earliest stages.
\begin{equation}\label{origin}
\left\{\aligned
& u_t=\Delta u-\nabla\cdot(u\nabla v)+\xi\nabla\cdot(u\nabla w)+au-\mu u^{\alpha}, && x\in\Omega, t>0,\\
& v_t=\Delta v+\nabla\cdot(v\nabla w)-v+u,&& x\in\Omega, t>0,\\
& 0=\Delta w-w+u,&& x\in\Omega, t>0,\\
& \frac{\partial u}{\partial \nu}=\frac{\partial v}{\partial\nu}=\frac{\partial w}{\partial\nu}=0,&& x\in\partial\Omega, t>0,\\
& u(x,0)=u_0(x),\ v(x,0)=v_0(x),&& x\in\Omega
\endaligned\right.
\end{equation}
in a bounded domain $\Omega\subset\mathbb{R}^{N}$ $(N\geq1)$ with smooth boundary, where $\xi$, $\mu$ and $a$ are positive parameters, the unknown function $u(x, t)$ represents the density of endothelial cells, $v(x,t)$ is the density of adhesive sites, and $w(x,t)$ stands for the density of the matrix. Assuming that the quadruple of initial data $(u_0,v_0)$ satisfies
\begin{equation}\label{origin2}
\left\{\aligned
& u_{0} \in C^{0}(\overline{\Omega}), u_{0}(x) \neq 0, \\
& v_{0} \in W^{1,\infty}(\overline{\Omega}), v_{0}(x)>0.\\
\endaligned\right.
\end{equation}

Before introducing our main findings, we summarize several studies focusing on chemotaxis models stemming from tumor growth. Tumor is a vascular-dependent disease, showing growth, invasion and metastasis, all of which are closely related to angiogenesis. Tumor cells secrete a series of diffusible chemicals, which stimulate endothelial cells to migrate, rearrange into capillaries or buds and proliferate \cite{OMC 1996 JMAMB}. The growth of tumor consists of two different stages: avascular stage and vascular stage. During the avascular stage, the tumor lacks nutritional supplies and metabolic transport channels, resulting in its small size. Tumor cells mainly rely on diffusion to survive, which sometimes leads to dormancy. In the process of tumor growth, tumor cells secrete diffusive substances, particularly angiogenic factors, which is very important for understanding the regulation of vascular growth \cite{FK 1987science}. Endogenous negative regulators of angiogenesis, such as thrombospondin, angiostatin and glioma-derived angiogenesis inhibitory factor, play crucial roles in neovascularized tumours \cite{FA 1995 nature}. As the tumor progressed to its later stages, its exponential growth is facilitated by the penetration of capillaries into its interior, which providing direct nourishment to the malignancy \cite{FA 1950 A.C.R}. Angiogenesis, haptotaxis, and convection are all critical factors in the movement of endothelial cells, as evidenced by numerous studies \cite{PK 1989 CROH, SRGP 1985 IM, YO 1978 Nature}. Endothelial cells secrete a matrix, comprised of fibronectin, laminin, and type IV collagen, with the distribution of adhesive sites on this matrix influencing their migratory patterns. Diffusion, chemotaxis, and convection jointly control the kinetics of endothelial cell movement, as outlined in previous research \cite{Chaplain Stuart 1993 JMAMB, PK 1989 CROH, Stokes Lauffenburger 1991 JTB}. Scholars have delved into the fundamental principles and strategies for governing these processes through mathematical modeling \cite{Mcelwain Balding 1985 JTB, Edelstein 1982 JTB}. Additionally, there are a large number of studies exploring chemotaxis-only models and their variations, as referenced in \cite{Ding Wang 2019 DCDSSB, Fujie 2020 DCDSSS, Tian Zheng 2016 CPAA, Wu Shi 2017 CPAA, zhao 2020 DCDS}.

When considering the intricate branching and growth mode of capillary sprouts in the process of angiogenesis, many experts and scholars have delved into its fundamental principles and strategies through mathematical modeling. Notably, Orme and Chaplain \cite{OMC 1996 JMAMB} introduced a nondimensionalized chemotaxis-convection system, which provides a deep understanding of these complex biological phenomena to describe the branching of capillary sprout during tumor angiogenesis.
\begin{equation}\label{Chaplain}
\left\{\aligned
& u_{t}=d_{1}\Delta u-\nabla \cdot (u \nabla v)+\nabla \cdot (u \nabla w),&& x\in\Omega, t>0,\\
& v_{t}=d_{2}\Delta v +\nabla \cdot (v \nabla w)- \mu v+\kappa u,&& x\in\Omega, t>0,\\
& w_{t}=d_{3}\Delta w+ru-\delta w,&& x\in\Omega, t>0,\\
\endaligned\right.
\end{equation}
where $d_{1}$, $d_{2}$, $d_{3}$, $\kappa$, $r$, $\mu$ and $\delta$ are positive parameters, $u = u(x,t)$, $v=v(x,t)$ and $w=w(x,t)$ are the density of the endothelial cells, the adhesive sites and the matrix, respectively. The model \eqref{Chaplain} forms a critical coupling for the initial stage of tumor-related angiogenesis. As cells migrate upward, they will generate an adhesion gradient, which will be influenced by their interaction with the surrounding matrix. Especially, the convection of these cells heavily depends to the matrix. In addition, due to the convection term in the model, the spatial regularity of variable $v$ is closely related to the spatial regularity of variable $w$. This intricate coupling ensures the accuracy of the model in capturing the dynamic processes involved in tumor angiogenesis. In \cite{OMC 1996 JMAMB}, the authors demonstrated the model's abundant dynamic properties through the utilization of linear stability analysis and numerical simulation. The famous Keller-Segel system, which has garnered extensive research attention in various contexts \cite{ Tao Winkler 2015 SJMA, wink 2013 JMPA,  winkler 2010 JDE, winkler 2016 JMPA}, serves as a pertinent reference. With the expansion of the matrix, endothelial cells and adhesive sites are partially entrained within its expansion. It is noteworthy that the endothelial cells tend to migrate to areas with higher concentration of adhesion sites. Additionally, both the matrix and adhesive sites undergo a decay processes. Li and Tao \cite{Li Tao 2020 JMAA}  considered the chemotaxis-convection model \eqref{Chaplain} with $N=1$, and obtained that for all suitably regular initial data, an associated Neumann initial-boundary problem admits a globally defined bounded smooth solution.
In another study, Tao and Winkler \cite{Tao Winkler 2021 NA} concerned with a parabolic-parabolic-elliptic system, in $N$-dimensional bounded convex domains with $N\leq3$, a suitable assumption on largeness of the parameter corresponding to the latter repulsion term ensures global existence of a bounded classical solution.
Sun and Li \cite{sun li 2021 JMAA} considered the initial-boundary value problem and investigated the global bounded solution in the classical sense in two dimensional case,
\begin{equation}\label{Sun Chunlei}
\left\{\aligned
& u_{t}=d_{1}\Delta u-\nabla \cdot (u \nabla v)+\nabla \cdot (u \nabla w)+f(u),&& x\in\Omega, t>0,\\
& v_{t}=d_{2}\Delta v +\nabla \cdot (v \nabla w)- \mu v+\kappa u,&& x\in\Omega, t>0,\\
& w_{t}=d_{3}\Delta w+ru-\delta w,&& x\in\Omega, t>0.\\
\endaligned\right.
\end{equation}
Assuming that the function $f\in C^{1}([0,\infty))$ satisfies $f(s)\leq a-bs^{m}$ for $s>0$ with $a\geq0$ and $b>0$ as well as $f(0)\geq0$, when $m>3$, for all suitably regular initial data, the authors proved that the corresponding Neumann-type initial-boundary value problem possesses a globally defined bounded classical solution. The function $f$ adheres to specific growth constraints, thereby guaranteeing that endothelial cells undergo degradation and apoptosis at elevated population densities. It plays a crucial role in the regularity of solutions. Sun and Li \cite{sun li 2021 JMAA} noticed that on a large time scale, for the incentive example of  endothelial cells, individuals may die at high population density, then focused on the endothelial cells near the tip of the sprouts and considered the global bounded solution with $N=2$. Recently, Tang, Zheng and Li \cite{Tang} extended the previous global boundedness result. They considered a quasilinear attraction-repulsion chemotaxis system with logistic source and proved that there exists a unique global-in-time and bounded classical solution for all appropriately regular nonnegative initial data. Then, Zheng and Ke \cite{zheng ke 2022 CPAA} obtained the first result about the boundedness of solution for the system with higher dimensional $(N\geq3)$. When $f(u)=au-bu^{m}$, if $m>1+\frac{N}{2}$, the corresponding Neumann initial-boundary value problem admits a unique global bounded classical nonnegative solution. Moreover, the logistic source can prevent the blow up phenomenon.

Motivated by the aforementioned works, we delve into the parabolic-parabolic-elliptic system \eqref{origin}, disregarding any constraints on the spatial dimension and the index $\xi$. Indeed, our main result reduces the assumption on the exponent $\alpha$ in the logistic source compared to the above mentioned conditions, which ensured the boundedness results of the system \eqref{origin}. Our result also gives the relationship between the spatial dimensions with the index $\alpha$ in logistic terms.

Our main result can be described as follows:
\begin{theorem}\label{theorem} Let $N\geq1$ and assume that the initial data $u_{0}$ and $v_{0}$ satisfy \eqref{origin2} in a bounded domain with smooth boundary. If one of the following cases holds:

(i) $N\geq4$ and $\alpha>\frac{4N-4+N\sqrt{2N^2-6N+8}}{2N}$,

(ii) $N=3$, $\alpha>2$, for any $\mu>0$ or $\alpha=2$, the index $\mu$ should be suitably big,

(iii) $N=2$, $\alpha\geq2$, for any $\mu>0$,\\
then the problem \eqref{origin} admits a unique global bounded classical solution $(u, v, w)$ with the property that $u\geq0$, $v\geq0$ and $w\geq0$. Moreover, there exists $C>0$ such that
\begin{equation}\label{111}
\parallel v(\cdot,t)\parallel_{L^{\infty}(\Omega)}+\parallel u(\cdot,t) \parallel_{L^{\infty}(\Omega)}+\parallel w(\cdot,t)\parallel_{L^{\infty}(\Omega)}
\leq C \quad\mbox{for all}~~t>0.
\end{equation}
\end{theorem}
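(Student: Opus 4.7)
The plan is to promote the local-in-time classical solution to a global bounded one by establishing a uniform-in-time $L^\infty$-bound on $u$; combined with standard elliptic regularity for $w$ and parabolic regularity for $v$, this forces $T_{\max} = \infty$ through the usual extensibility criterion. A local nonnegative classical solution on a maximal interval $[0, T_{\max})$, together with the blow-up alternative ``if $T_{\max} < \infty$ then $\|u(\cdot,t)\|_{L^\infty} + \|v(\cdot,t)\|_{W^{1,q}}$ becomes unbounded for some $q > N$,'' is obtained by a standard Amann-type fixed-point construction adapted to the chemotaxis-convection structure; nonnegativity follows from the maximum principle. Integrating the first equation over $\Omega$ and using Young's inequality on $au - \mu u^\alpha$ yields $\sup_{t \in [0, T_{\max})} \|u(\cdot,t)\|_{L^1(\Omega)} \leq C$, and elliptic $L^p$-theory applied to $-\Delta w + w = u$ produces $\|w(\cdot,t)\|_{W^{2,p}} \leq C\|u(\cdot,t)\|_{L^p}$ for every $p \in (1,\infty)$.

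The heart of the argument is an $L^p$-estimate for $u$ with $p$ large. Testing the first equation by $p u^{p-1}$ and integrating by parts produces
\begin{align*}
&\frac{d}{dt}\int_\Omega u^p + \frac{4(p-1)}{p}\int_\Omega |\nabla u^{p/2}|^2 \\
&\qquad = -(p-1)\int_\Omega u^p \Delta v + \xi(p-1)\int_\Omega u^p \Delta w + ap\int_\Omega u^p - \mu p\int_\Omega u^{p+\alpha-1}.
\end{align*}
Substituting $\Delta w = w - u$ from the third equation converts the convection contribution into $-\xi(p-1)\int_\Omega u^{p+1} + \xi(p-1)\int_\Omega u^p w$, and regardless of the sign of $\xi$ the leading term $|\xi|(p-1)\int_\Omega u^{p+1}$ is absorbable into $\mu p\int_\Omega u^{p+\alpha-1}$ via Young's inequality whenever $\alpha \geq 2$; at the borderline $\alpha = 2$ in three dimensions this absorption forces $\mu$ to be taken sufficiently large, matching case (ii). The chemotaxis integral $\int_\Omega u^p \Delta v$ is more delicate: it is treated either by substituting $\Delta v = v_t + v - u - \nabla\cdot(v\nabla w)$ from the second equation, or by running a parallel $L^q$-estimate on $\nabla v$ derived from $v_t + (I-\Delta)v = u + \nabla\cdot(v\nabla w)$ through the Neumann heat semigroup, and then interpolating via the Gagliardo--Nirenberg inequality between $\|\nabla u^{p/2}\|_{L^2}$ and $\|u\|_{L^1}$.

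The main analytical obstacle lies in calibrating the Gagliardo--Nirenberg exponents and the Young splittings so that every cross term is simultaneously absorbed into the gradient dissipation $\|\nabla u^{p/2}\|_{L^2}^{2}$ and the logistic nonlinearity $\mu\|u\|_{L^{p+\alpha-1}}^{p+\alpha-1}$. The resulting algebraic constraint reduces to a quadratic inequality in $\alpha$ whose larger root in dimensions $N \geq 4$ is exactly $\frac{4N-4+N\sqrt{2N^2-6N+8}}{2N}$, producing threshold (i); in dimensions $N = 2, 3$ the more favorable Sobolev embedding exponents yield the weaker conditions in (ii)--(iii). Once a uniform bound $\sup_t \|u(\cdot,t)\|_{L^p} < \infty$ is secured for some $p > N/2$, a Moser-type iteration (or a semigroup bootstrap on the $u$-equation, now viewed as a linear parabolic equation with coefficients $\nabla v$ and $\nabla w$ of bounded $L^q$-norm) upgrades this to an $L^\infty$-bound on $u$, and Schauder-type regularity yields the uniform $W^{1,\infty}$-bound on $v$ together with the $W^{2,p}$-bound on $w$, contradicting the blow-up alternative unless $T_{\max} = \infty$ and establishing \eqref{111}. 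Uniqueness of the classical solution follows from a routine energy estimate for the difference of two solutions under the derived regularity.
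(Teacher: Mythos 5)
Your proposal captures the standard chemotaxis-with-logistic road map (local existence, $L^1$ bound, $L^p$ bootstrap, Moser/semigroup upgrade), but the route through the $L^p$ estimate is genuinely different from the paper's and, as sketched, has gaps.

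The paper does not run a single $L^p$ scheme across all dimensions. For $N\le3$ it first proves $\|v(\cdot,t)\|_{L^\infty(\Omega)}\le C$ by a Moser iteration applied directly to $v$ (Lemma 4.1), and this $L^\infty$-bound on $v$ is used in essentially every subsequent estimate in Sections 4 (e.g.\ in the $\int u^p|\Delta v|$ treatment, in controlling $\|\nabla v\|_{L^4}$, in the $N=2$ maximal-regularity step). Your proposal never produces such a bound, and without it the chemotaxis integral $\int_\Omega u^p\Delta v$ does not close the way you describe: substituting $\Delta v$ from the second equation or interpolating $\nabla v$ both generate terms in $v$ and $\nabla v$ that must be controlled against something, and in the paper that ``something'' is precisely $\|v\|_{L^\infty}$. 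For $N\geq4$ the paper instead establishes $\|v\|_{L^{p_0}}\le C$ for $p_0<\frac{2\alpha}{N-2}-\frac{2}{N}$ (Lemma \ref{Le3.2}), and the quadratic threshold on $\alpha$ emerges from calibrating the Gagliardo--Nirenberg exponents against this particular $p_0$; asserting that ``the resulting algebraic constraint reduces to a quadratic inequality whose larger root is exactly'' the stated value, without exhibiting the GN interpolation chain, leaves out the content of Lemma \ref{Le3.03}.

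You also attribute the $\mu$-largeness in case (ii) solely to absorbing the convection-induced $|\xi|(p-1)\int u^{p+1}$ into $\mu p\int u^{p+\alpha-1}$ at $\alpha=2$. That accounts for only half of the paper's condition $\mu>\max\{2|\xi|,2C_3A\}$: the constant $2C_3A$ (with $A$ involving $\|v\|_{L^\infty}^2$) arises from coupling $\int u^2$ with $\int|\nabla v|^4$ in Lemmas \ref{Le3.301121}--\ref{Le3.30112}, i.e.\ from the chemotaxis term, not the convection term. Similarly for $N=2$, $\alpha=2$, the paper does not start from $\int u^p$: it first runs an entropy estimate on $\int(u+1)\ln(u+1)$ coupled with $\int|\nabla v|^2$ (Lemmas \ref{Le003.13}--\ref{Le003.1000301}), and only then performs an $L^{p_0}$ estimate for $p_0$ close to $1$ using maximal $L^p$-$L^q$ regularity (Lemma \ref{Le3.13}); that the threshold vanishes as $p\to1^+$ is what eliminates the $\mu$-constraint, and your narrative of ``$p$ large'' followed by ``$p>N/2$'' obscures this. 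In short, the proposal identifies several correct ingredients, but the $L^\infty$-bound on $v$ for $N\le3$, the coupled $u$--$\nabla v$ energy/entropy functionals, and the derivation of the $N\geq4$ algebraic threshold are all essential and missing.
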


\begin{remark}
In this paper, we only give the proof of Theorem \ref{theorem} in the case $N=2$ and $\alpha=2$ for any $\mu>0$, while the case $N=2$ and $\alpha>2$ is standard which we omit the detail proof.
\end{remark}
\begin{remark}
Theorem \ref{theorem} significantly enhances the well-posedness results for the case of $N\geq 3$ and $\alpha>1+\frac{N}{2}$, which were obtained by Zheng and Ke \cite{zheng ke 2022 CPAA}. This advancement demonstrates a notable extension of the previous research, broadening its applicability and deepening our understanding of the relevant concepts.
\end{remark}
\begin{remark}
In this article, we demonstrate that, without any restrictions on the index $\xi$, for $N\geq1$, then \eqref{origin} has a global classical bounded solution. Furthermore, the research conducted in this paper extends and enhances the previously  results in \cite{Tao Winkler 2021 NA}.
\end{remark}
\begin{remark}
Our main result provides a substantial relaxation of the requirement about parameters and the space dimensions.
\end{remark}

Throughout this paper, we use symbols $C$ and $C_{i}$ $(i=1,2,\cdot\cdot\cdot)$ as some generic positive constants which may vary in the context.
Moreover, for simplicity, $u(x,t)$ is written as $u$, the integral $\int_{\Omega}u(x)dx$ is written as $\int_{\Omega}u(x)$ and $ \int^{t+\tau}_{t}\int_{\Omega} u(x,s)dxds$ is written as $ \int^{t+\tau}_{t}\int_{\Omega} u(x,s)$.

The rest of this paper is organized as follows.  In the following section, we state some preliminaries which will be used later, the local existence theorem also be described in Section 2. In Section 3, the a priori estimate of solution to \eqref{origin} are established for $N\geq4$. In Section 4, when $N=2$ and $N=3$, we give some main inequalities about the solution of \eqref{origin}. The proof of theorem \ref{theorem} is given in Section 5.

\newcommand\HII{{\bf II}}
\section{Local Existence and Preliminaries}
\setcounter{equation}{0}
In order to describe our main results, we first present several fundamental lemmas which will be used later. Notably, the Gagliardo-Nirenberg inequality will be invoked frequently throughout the forthcoming proofs.
\begin{lemma}\label{GN}(see \cite{ZZ2022JMAA})
Let $\Omega$ be a bounded Lipschitz domain in $\mathbb{R}^N$, $p$, $q$, $r$, $s$$\geq1$, $j$, $m\in\mathbb{N}_0$ and $\alpha\in[\frac{j}{m},1]$ satisfying
 $$\frac{1}{p}=\frac{j}{N}+\left(\frac{1}{r}-\frac{m}{N}\right)\alpha+\frac{1-\alpha}{q}.$$
 Then there are positive constants $C_{GN}$ and $G'_{GN}$ such that for all functions $w\in L^q(\Omega)$ with $\nabla w\in L^r(\Omega)$, $w\in L^s(\Omega)$,
 $$\|D^j w\|_{L^p(\Omega)}\leq C_{GN}\|D^m w\|_{L^r(\Omega)}^\alpha\|w\|_{L^q}^{1-\alpha}+C'_{GN}\|w\|_{L^s}.$$
\end{lemma}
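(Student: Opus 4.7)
The plan is to prove this classical Gagliardo-Nirenberg interpolation inequality in two stages: first establish the scale-invariant inequality on the whole space $\mathbb{R}^N$ under the stated exponent identity, and then transfer it to the bounded Lipschitz domain $\Omega$ via an extension operator. The extension step is precisely what produces the additional lower-order term $C'_{GN}\|w\|_{L^s(\Omega)}$.

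For the first stage, I would establish, for Schwartz $f$ on $\mathbb{R}^N$, the homogeneous estimate
\begin{equation*}
\|D^j f\|_{L^p(\mathbb{R}^N)}\leq C\|D^m f\|_{L^r(\mathbb{R}^N)}^{\alpha}\|f\|_{L^q(\mathbb{R}^N)}^{1-\alpha}.
\end{equation*}
The cleanest route is via Littlewood-Paley decomposition $f=\sum_k \Delta_k f$ combined with Bernstein's inequality, splitting the sum at a frequency $2^{k_0}$ chosen to balance the high- and low-frequency contributions between $\|D^m f\|_{L^r}$ and $\|f\|_{L^q}$. The exponent identity in the hypothesis is exactly the scaling relation forced by dimensional analysis: rescaling $f_\lambda(x)=f(\lambda x)$ makes both sides homogeneous of the same degree in $\lambda$ if and only if the stated relation on $1/p$, $1/r$, $1/q$ holds, and the restriction $\alpha\in[j/m,1]$ guarantees admissibility of the interpolation between $W^{m,r}$ and $L^q$.

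For the second stage, I would invoke Stein's extension theorem for bounded Lipschitz domains, which produces a linear operator $E$ bounded from $W^{m,r}(\Omega)$ to $W^{m,r}(\mathbb{R}^N)$ and simultaneously from $L^q(\Omega)$ and $L^s(\Omega)$ to their $\mathbb{R}^N$ counterparts, with $(Ew)|_\Omega=w$ and $Ew$ compactly supported. Applying the whole-space inequality to $Ew$ and restricting to $\Omega$ yields
\begin{equation*}
\|D^j w\|_{L^p(\Omega)}\leq \|D^j(Ew)\|_{L^p(\mathbb{R}^N)}\leq C\|D^m(Ew)\|_{L^r(\mathbb{R}^N)}^{\alpha}\|Ew\|_{L^q(\mathbb{R}^N)}^{1-\alpha}.
\end{equation*}
Using the extension bounds together with Hölder control of intermediate Lebesgue norms on the bounded domain, the right-hand side is estimated by $C(\|D^m w\|_{L^r(\Omega)}+\|w\|_{L^s(\Omega)})^{\alpha}\|w\|_{L^q(\Omega)}^{1-\alpha}$. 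Invoking the elementary inequality $(a+b)^{\alpha}\leq a^{\alpha}+b^{\alpha}$ valid for $\alpha\in(0,1]$ and then Young's inequality to absorb the cross term $\|w\|_{L^s}^{\alpha}\|w\|_{L^q}^{1-\alpha}$ into a multiple of $\|w\|_{L^s}$ produces the desired form with the additive $C'_{GN}\|w\|_{L^s(\Omega)}$.

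The main obstacle is the book-keeping in the second stage: the constants of Stein's extension depend on the Lipschitz character of $\partial\Omega$, and the additive $L^s$ term arises from both the reflection across $\partial\Omega$ and the Hölder interpolation of lower-order norms, so care is needed to ensure a single operator simultaneously controls all three norms $W^{m,r}$, $L^q$, and $L^s$. The edge cases $\alpha=j/m$ (where the multiplicative right-hand side degenerates into a pure Sobolev embedding) and $\alpha=1$ (pure Sobolev) must be handled separately and glued by real interpolation. In practice, since the statement is given with an explicit citation, one may simply verify that $\Omega$ is admissible and the exponents satisfy the displayed scaling identity, and then defer to the reference.
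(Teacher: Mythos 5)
The paper offers no proof of this lemma: it is quoted verbatim from \cite{ZZ2022JMAA} as a known form of the Gagliardo--Nirenberg inequality, so there is no ``paper's route'' to compare against. Your two-stage plan (homogeneous inequality on $\mathbb{R}^N$ via Littlewood--Paley and Bernstein, then transfer to $\Omega$ by Stein extension) is the standard architecture and is sound in outline, but the step you dismiss as ``book-keeping'' is where the actual content of the bounded-domain version lives, and as written it has a genuine gap. Stein's operator is bounded from $W^{m,r}(\Omega)$ to $W^{m,r}(\mathbb{R}^N)$, so what you actually get is $\|D^m(Ew)\|_{L^r(\mathbb{R}^N)}\leq C\|w\|_{W^{m,r}(\Omega)}$, which carries $\|w\|_{L^r(\Omega)}$ and all intermediate derivatives. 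Reducing this to $\|D^m w\|_{L^r(\Omega)}+\|w\|_{L^s(\Omega)}$ for an \emph{arbitrary} $s\geq1$ --- in particular $s$ much smaller than $r$ and $q$ --- is itself a Gagliardo--Nirenberg-type statement and cannot be obtained by ``H\"older control of intermediate Lebesgue norms,'' since on a bounded domain H\"older only lets a larger exponent control a smaller one. The same defect appears at the end: Young's inequality applied to $\|w\|_{L^s}^{\alpha}\|w\|_{L^q}^{1-\alpha}$ leaves a remainder proportional to $\|w\|_{L^q}$, which is \emph{not} dominated by $\|w\|_{L^s}$ when $s<q$, so you do not land on the claimed right-hand side.

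The standard way to close this gap is not extension-plus-absorption but a decomposition $w=P+(w-P)$, where $P$ is the projection of $w$ onto the polynomials of degree less than $m$ (e.g. the $L^2(\Omega)$-orthogonal projection). For the remainder $w-P$ one has the genuinely homogeneous inequality $\|D^j(w-P)\|_{L^p}\leq C\|D^m w\|_{L^r}^{\alpha}\|w-P\|_{L^q}^{1-\alpha}$ via Poincar\'e and the whole-space estimate; for $P$, the space of such polynomials is finite dimensional, so all norms on it are equivalent and $\|D^jP\|_{L^p}\leq C\|P\|_{L^s(\Omega)}\leq C\|w\|_{L^s(\Omega)}+C\|w-P\|_{L^s(\Omega)}$, which is how the additive term acquires an arbitrary exponent $s\geq1$. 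Your final suggestion --- verify the hypotheses and defer to the cited reference --- is in fact exactly what the paper does, and is the appropriate course here.
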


We also give an ODE's theory, which plays a crucial role in our main proof.
\begin{lemma}\label{Le2}(see \cite{zheng  2015 JDE})
Let $y(t)\geq 0$ be a solution of problem
\begin{equation*}
\left\{\aligned
& y'(t)+Ay^{p} \leq B,\ \ \ t>0, \\
& y(0)=y_{0}\\
\endaligned\right.
\end{equation*}
with $ A>0$, $p>0$ and $B \geq 0$. Then for any $t>0$, we have
$$y(t)\leq \max \left\{y_{0},\left(\frac{B}{A}\right)^{\frac{1}{p}}\right\}.$$
\end{lemma}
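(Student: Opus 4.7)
The plan is to show the comparison directly by a contradiction argument based on the fact that whenever $y$ exceeds the equilibrium level $(B/A)^{1/p}$, the differential inequality forces its derivative to be non-positive. Set $M:=\max\{y_{0},(B/A)^{1/p}\}$; this is the quantity we must show bounds $y(t)$ from above, and the assumption $p,A>0$ with $B\ge 0$ ensures $M$ is well defined and $M^{p}\ge B/A$, i.e.\ $AM^{p}\ge B$.

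To avoid dealing with the borderline case where $y$ could touch $M$ with zero derivative, I would first prove the slightly stronger statement $y(t)\le M+\varepsilon$ for every $\varepsilon>0$, and then pass to the limit $\varepsilon\downarrow 0$. Fix $\varepsilon>0$ and set $M_{\varepsilon}:=M+\varepsilon$, so that $AM_{\varepsilon}^{p}>B$. Assume for contradiction that $E:=\{t>0:y(t)>M_{\varepsilon}\}$ is nonempty, and let $t^{\star}:=\inf E$. Because $y$ is continuous (being differentiable) and $y(0)=y_{0}\le M<M_{\varepsilon}$, we have $t^{\star}>0$ and $y(t^{\star})=M_{\varepsilon}$; moreover, by the definition of the infimum there is a sequence $t_{n}\downarrow t^{\star}$ with $y(t_{n})>M_{\varepsilon}=y(t^{\star})$, so
\begin{equation*}
y'(t^{\star})\;=\;\lim_{n\to\infty}\frac{y(t_{n})-y(t^{\star})}{t_{n}-t^{\star}}\;\ge\;0.
\end{equation*}

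On the other hand, evaluating the differential inequality at $t^{\star}$ gives
\begin{equation*}
y'(t^{\star})\;\le\;B-Ay(t^{\star})^{p}\;=\;B-AM_{\varepsilon}^{p}\;<\;0,
\end{equation*}
which contradicts the previous inequality. Hence $E=\emptyset$ and $y(t)\le M_{\varepsilon}$ for all $t\ge 0$. Letting $\varepsilon\downarrow 0$ produces the claimed bound $y(t)\le M$.

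The only subtle point in this plan is the handling of the threshold itself: at a point where $y=(B/A)^{1/p}$, the inequality only yields $y'\le 0$, which is not quite enough to rule out an immediate excursion above $M$, and that is precisely why I would introduce the auxiliary cushion $\varepsilon>0$ before invoking continuity. Everything else is elementary: positivity of $A$ and $p$ guarantees that $s\mapsto As^{p}$ is strictly increasing on $[0,\infty)$, and the rest of the argument is a one-sided comparison at a first-exit time, with no need for an explicit integration of the ODE.
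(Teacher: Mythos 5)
Your proof is correct and complete. The paper does not prove this lemma itself (it is quoted from the cited reference), but the standard argument for it is exactly the first-exit-time comparison you give, and your $\varepsilon$-cushion correctly handles the only delicate point, namely that at the threshold $\left(\frac{B}{A}\right)^{\frac{1}{p}}$ the inequality only forces $y'\leq 0$.
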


\begin{lemma}\label{Le20}(see \cite{J2018JMAA})Let
\begin{equation*}
A_1=\frac{1}{\delta+1}\left[\frac{\delta+1}{\delta}\right]^{-\delta}\left(\frac{\delta-1}{\delta}\right)^{\delta+1}
\end{equation*}
and $ H(y)=y+A_1y^{-\delta}C_{\delta+1}$ for $y>0$. For any fixed $\delta>1$, $C_{\delta+1}>0$. Then
$$\min_{y>0}H(y)=\frac{\delta-1}{\delta}C^{\frac{1}{\delta+1}}_{\delta+1}.$$
\end{lemma}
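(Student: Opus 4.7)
The statement is an elementary one-variable calculus optimization: I want to minimize $H(y)=y+A_1 C_{\delta+1}y^{-\delta}$ over $y>0$, where $A_1$ is an explicit constant depending only on $\delta>1$. My plan is simply to locate the unique critical point by differentiation, confirm that it is the global minimum, and plug back in, reducing the cluttered form of $A_1$ to a convenient single expression along the way.

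First I would simplify $A_1$. Writing
\begin{equation*}
A_1=\frac{1}{\delta+1}\Bigl[\tfrac{\delta+1}{\delta}\Bigr]^{-\delta}\Bigl(\tfrac{\delta-1}{\delta}\Bigr)^{\delta+1}
=\frac{\delta^\delta}{(\delta+1)^{\delta+1}}\cdot\frac{(\delta-1)^{\delta+1}}{\delta^{\delta+1}}
=\frac{1}{\delta}\Bigl(\tfrac{\delta-1}{\delta+1}\Bigr)^{\delta+1},
\end{equation*}
so that in particular $\delta A_1=\bigl(\tfrac{\delta-1}{\delta+1}\bigr)^{\delta+1}$. This is the form that makes the final answer transparent.

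Next I would compute $H'(y)=1-\delta A_1 C_{\delta+1}y^{-(\delta+1)}$, set $H'(y)=0$, and solve for the unique positive critical point
\begin{equation*}
y^\ast=\bigl(\delta A_1 C_{\delta+1}\bigr)^{1/(\delta+1)}=\frac{\delta-1}{\delta+1}\,C_{\delta+1}^{1/(\delta+1)}.
\end{equation*}
Since $H''(y)=\delta(\delta+1)A_1C_{\delta+1}y^{-(\delta+2)}>0$ for all $y>0$ (and $H(y)\to\infty$ as $y\to 0^+$ and as $y\to\infty$), $y^\ast$ is indeed the global minimizer.

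Finally I would evaluate $H(y^\ast)$. The critical-point identity $(y^\ast)^{\delta+1}=\delta A_1 C_{\delta+1}$ gives $A_1 C_{\delta+1}(y^\ast)^{-\delta}=y^\ast/\delta$, whence
\begin{equation*}
H(y^\ast)=y^\ast+\frac{y^\ast}{\delta}=\frac{\delta+1}{\delta}\,y^\ast=\frac{\delta+1}{\delta}\cdot\frac{\delta-1}{\delta+1}\,C_{\delta+1}^{1/(\delta+1)}=\frac{\delta-1}{\delta}\,C_{\delta+1}^{1/(\delta+1)},
\end{equation*}
which is the claimed value. There is essentially no obstacle here beyond bookkeeping: the only place that requires a bit of care is the algebraic simplification of $A_1$, and in particular noticing that the prefactor $\tfrac{1}{\delta+1}[(\delta+1)/\delta]^{-\delta}$ combines with $[(\delta-1)/\delta]^{\delta+1}$ precisely so that $\delta A_1=[(\delta-1)/(\delta+1)]^{\delta+1}$, which makes both $y^\ast$ and $H(y^\ast)$ collapse into clean closed forms.
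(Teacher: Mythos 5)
Your proof is correct, and the calculus argument (simplify $A_1$, locate the unique critical point, check convexity, substitute back) is the standard one; the paper itself states this lemma without proof, citing \cite{J2018JMAA}, where essentially the same computation appears. All of your algebra checks out, in particular the key identity $\delta A_1=\bigl(\tfrac{\delta-1}{\delta+1}\bigr)^{\delta+1}$ and the resulting value $H(y^\ast)=\tfrac{\delta-1}{\delta}C_{\delta+1}^{1/(\delta+1)}$.
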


Next, we recall the following elementary inequality.
\begin{lemma}\label{Le4}(see \cite{zheng ke 2022 CPAA})
Let $T>0$, $\tau\in(0,T)$, $A>0$, $\alpha>0$ and $B>0$, suppose that
$y: [0,T)\rightarrow[0,\infty)$ is absolutely continuous such that
\begin{equation*}
 y'(t)+Ay^{\alpha}(t) \leq h(t) \quad\mbox{for all}~~t\in(0,T) \\
\end{equation*}
with some nonnegative function $h \in L^{1}_{loc}([0,T))$ satisfying
\begin{equation*}
 \int^{t}_{t-\tau} h(s)ds \leq B \quad\mbox{for all}~~t\in(\tau,T). \\
 \end{equation*}
Then there exists a positive constant $C$ such that
\begin{equation*}
y(t)\leq max \{y_{0}+B,\frac{1}{\tau^{\frac{1}{\alpha}}}(\frac{B}{A})^{\frac{1}{\alpha}}+2B\} \quad\mbox{for all}~~t\in(\tau,T). \\
\end{equation*}
\end{lemma}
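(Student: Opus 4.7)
The plan is to prove the lemma by a bootstrap argument based on a carefully chosen threshold. I would introduce the level $K := \tau^{-1/\alpha}(B/A)^{1/\alpha}$, so that $A K^\alpha \tau = B$. For fixed $t \in (\tau, T)$, I distinguish two cases according to whether $y$ meets this threshold on the sliding window $[t-\tau, t]$. In the first case there exists $s_0 \in [t-\tau, t]$ with $y(s_0) \leq K$; dropping the nonnegative dissipation term $A y^\alpha$ and integrating the differential inequality from $s_0$ to $t$ yields $y(t) \leq y(s_0) + \int_{s_0}^t h(s)\,ds \leq K + B$, since $[s_0,t] \subset [t-\tau,t]$ and the hypothesis on $h$ applies. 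In the second case $y(s) > K$ on the whole window, so $A \int_{t-\tau}^t y^\alpha(s)\,ds > A K^\alpha \tau = B$, and integrating the differential inequality over $[t-\tau,t]$ produces the strict decrease $y(t) < y(t-\tau)$.

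The second case alone does not yield an absolute bound, so I would iterate it backwards in steps of length $\tau$. Setting $t_k := t - k\tau$, as long as the ``no dip'' scenario persists at each $t_k$, one obtains a strictly decreasing chain $y(t) < y(t_1) < y(t_2) < \cdots$. The iteration must terminate at some finite $k^\star$, for one of two reasons: either the ``dip'' scenario finally applies at $t_{k^\star}$, giving $y(t_{k^\star}) \leq K + B$ by the argument of the first case; or the step index drops into $t_{k^\star} \in [0, \tau]$, in which case I use the pointwise bound $y(s) \leq y_0 + \int_0^s h$ on $[0,\tau]$ together with $\int_0^\tau h \leq B$ (which follows by letting $t \to \tau^+$ in the hypothesis and invoking monotone convergence for the nonnegative integrand $h$) to conclude $y(t_{k^\star}) \leq y_0 + B$. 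Chaining back through the monotone decrease yields $y(t) \leq \max\{y_0 + B,\, K + B\}$, which already implies the stated bound $\max\{y_0 + B,\, K + 2B\}$.

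The main difficulties are primarily of a bookkeeping nature rather than analytical: one must verify that the backwards iteration terminates cleanly at the initial segment $[0, \tau]$ after $\lfloor t/\tau \rfloor$ steps, and one must justify the bound $\int_0^\tau h \leq B$ from a hypothesis phrased only for $t \in (\tau, T)$. Absolute continuity of $y$ provides the continuity needed to make the two cases genuinely exhaustive and to select the dip point $s_0$ consistently with $y(s_0) \leq K$. The small slack between the natural bound $K + B$ and the author's stated $K + 2B$ appears to be built in precisely to absorb these boundary subtleties cheaply, and no sharper bound is needed for applications to the PDE system in question.
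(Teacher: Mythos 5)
Your argument is correct, and it coincides with the standard proof of this lemma: the paper itself gives no proof but cites Zheng--Ke, where the argument is exactly this dichotomy (either $y$ dips below $K=\tau^{-1/\alpha}(B/A)^{1/\alpha}$ somewhere in the window $[t-\tau,t]$, yielding $y(t)\le K+B$, or $A\int_{t-\tau}^{t}y^{\alpha}>B$ forces $y(t)<y(t-\tau)$, iterated back to the initial segment). The only point worth tightening is the bound $\int_0^\tau h\le B$: the windows $[\epsilon,\tau+\epsilon]$ are not nested, so rather than "monotone convergence in $t\to\tau^+$" one should write $\int_\epsilon^\tau h\le\int_\epsilon^{\tau+\epsilon}h\le B$ and let $\epsilon\downarrow 0$, which is what you evidently intend.
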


 Finally, we will give the following estimates of parabolic equation.
\begin{lemma}\label{Le7}(\cite{cao 2014 JMAA}, \cite{H P 1997 CPDE})
Suppose $\gamma\in(1,+\infty)$ and $g\in L^{\gamma}((0,T);L^{\gamma}(\Omega))$. Assuming $v$ is a solution of the following initial problem
\begin{equation*}
\left\{\aligned
& v_{t}-\Delta v +v=g, \\
& \frac{\partial v}{\partial \nu}=0,\\
& v(x,0)=v_{0}(x).
\endaligned\right.
\end{equation*}
Then there exists a positive constant $C_{\gamma}$ such that if $s_{0}\in[0,T)$, $v(\cdot,s_{0})\in W^{2,\gamma}(\Omega))$ with $\frac{\partial v(\cdot,s_{0})}{\partial \nu}=0$, then
\begin{align*}
& \int_{s_0}^Te^{\gamma s}\|\Delta v(\cdot,s)\|_{L^\gamma(\Omega)}^\gamma ds\\
 \leq& C_\gamma\left(\int_{s_0}^Te^{\gamma s}\|g(\cdot,s)\|_{L^\gamma(\Omega)}^\gamma ds+e^{\gamma s_0}(\|v_{0}(\cdot,s_0)\|_{L^\gamma(\Omega)}^\gamma+\|\Delta v_{0}(\cdot,s_0)\|_{L^\gamma(\Omega)}^\gamma)\right).
\end{align*}
\end{lemma}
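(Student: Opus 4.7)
The target is a weighted maximal $L^\gamma$-regularity estimate for the Neumann heat equation with a zero-order term. My plan is to reduce it to the unweighted maximal $L^\gamma$-regularity for the Neumann Laplacian (the substantive content of Hieber--Pruss \cite{H P 1997 CPDE}) via a single exponential change of variables, so that the cited black-box does all the real analytic work.

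First I would absorb the $+v$ term into a time-shift and introduce the exponential weight in one stroke by setting $w(x,t):=e^{t-s_0}v(x,t)$ for $t\in[s_0,T)$. Using $v_t=\Delta v-v+g$, a direct differentiation yields
\begin{equation*}
w_t=e^{t-s_0}(v_t+v)=e^{t-s_0}(\Delta v+g)=\Delta w+e^{t-s_0}g,
\end{equation*}
so $w$ solves the pure Neumann heat equation $w_t-\Delta w=G$ on $\Omega\times(s_0,T)$ with forcing $G:=e^{t-s_0}g$, boundary condition $\partial_\nu w=0$, and initial data $w(\cdot,s_0)=v(\cdot,s_0)=v_0(\cdot,s_0)\in W^{2,\gamma}(\Omega)$ satisfying the compatibility condition $\partial_\nu w(\cdot,s_0)=0$.

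Next I apply the standard parabolic maximal $L^\gamma(L^\gamma)$-regularity for the Neumann Laplacian on a smooth bounded domain, which is precisely the Hieber--Pruss theorem: there is a constant $C_\gamma>0$, depending only on $\gamma$ and $\Omega$, such that any such $w$ obeys
\begin{equation*}
\int_{s_0}^T\|\Delta w(\cdot,s)\|_{L^\gamma(\Omega)}^\gamma\,ds\le C_\gamma\Bigl(\int_{s_0}^T\|G(\cdot,s)\|_{L^\gamma(\Omega)}^\gamma\,ds+\|w(\cdot,s_0)\|_{L^\gamma(\Omega)}^\gamma+\|\Delta w(\cdot,s_0)\|_{L^\gamma(\Omega)}^\gamma\Bigr),
\end{equation*}
where the appearance of both $\|w(\cdot,s_0)\|_{L^\gamma}$ and $\|\Delta w(\cdot,s_0)\|_{L^\gamma}$ is the natural $W^{2,\gamma}$-trace norm on initial data compatible with the Neumann condition. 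Substituting $\Delta w=e^{t-s_0}\Delta v$, $G=e^{t-s_0}g$, and $w(\cdot,s_0)=v_0(\cdot,s_0)$, then multiplying the entire inequality by $e^{\gamma s_0}$, reproduces the target bound verbatim.

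The only genuine obstacle is the underlying unweighted maximal regularity result itself, whose proof rests on operator-valued Fourier multiplier theorems and the bounded $H^\infty$-calculus of the Neumann Laplacian on $L^\gamma(\Omega)$ and is the heart of \cite{H P 1997 CPDE} (and the closely related treatment in \cite{cao 2014 JMAA}). Since this statement is exactly the quoted ingredient, the remaining work here is limited to verifying that the exponential substitution preserves the boundary and initial conditions and that the scaling of the weight produces the stated factor $e^{\gamma s_0}$; both are routine.
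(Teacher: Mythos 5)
The paper offers no proof of this lemma at all --- it is quoted as a known result from \cite{cao 2014 JMAA} and \cite{H P 1997 CPDE} --- and your reduction via the substitution $w=e^{t-s_0}v$, which converts $v_t-\Delta v+v=g$ into the pure Neumann heat equation $w_t-\Delta w=e^{t-s_0}g$ and then invokes the Hieber--Pr\"uss maximal $L^\gamma$-regularity with $W^{2,\gamma}$-compatible initial data, is exactly the standard derivation used in those references. Your computation of the weight factor $e^{\gamma s_0}$ and the preservation of the boundary/initial conditions is correct, so the proposal is sound modulo the cited black box.
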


Next, we review the well-known estimates for heat semigroup with homogeneous Neumann boundary conditions.
\begin{lemma}\label{semigroup}(See \cite{winkler 2010 JDE}, \cite{ZJ2021JDE})
Let $(e^{t\Delta})_{t}\geq0$ be the Neumann heat semigroup in $\Omega$
and let $\lambda_{1}>0$ denote the first nonzero eigenvalue of $-\Delta~in~ \Omega$ under Neumann boundary conditions.
Then there exists constant $C$,  if $1\leq q \leq p \leq \infty$, then
\begin{equation*}
 \|\nabla e^{t\Delta}w\|_{{L^{p}}(\Omega)}\leq C(1+t^{-\frac{n}{2}(\frac{1}{q}-\frac{1}{p})})e^{-\lambda_{1}t}\|w\|_{{L^{q}}(\Omega)}
\end{equation*}
for all $t>0$ and $w\in {L^{q}(\Omega)}$.
\end{lemma}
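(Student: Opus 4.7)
The plan is to combine a short-time smoothing estimate coming from the Gaussian kernel of the Neumann heat semigroup with a long-time exponential decay estimate coming from the spectral gap at $\lambda_1$. The decomposition that makes both regimes compatible is to split $w = \bar{w} + (w-\bar{w})$, where $\bar{w} = |\Omega|^{-1}\int_\Omega w$; constants are preserved by $e^{t\Delta}$ and annihilated by $\nabla$, so $\nabla e^{t\Delta} w = \nabla e^{t\Delta}(w - \bar{w})$, and it suffices to analyze the semigroup on the mean-zero invariant subspace $X_0 = \{f \in L^2(\Omega):\int_\Omega f = 0\}$, on which the spectrum of $-\Delta$ is bounded below by $\lambda_1$.

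For the short-time regime $0 < t \le 1$, I would invoke the classical Gaussian gradient estimate for the Neumann heat kernel $p(t,x,y)$ on a smooth bounded domain,
$$|\nabla_x p(t,x,y)| \le C\, t^{-(n+1)/2}\exp\bigl(-c|x-y|^2/t\bigr),$$
which can be established either through Davies-type off-diagonal semigroup estimates or via a boundary-reflection/flattening argument combined with interior and boundary parabolic regularity. Writing $\nabla e^{t\Delta}w(x) = \int_\Omega \nabla_x p(t,x,y)\, w(y)\,dy$ and applying Young's convolution inequality (equivalently, Riesz--Thorin interpolation between the $L^q \to L^\infty$ and $L^q \to L^q$ mapping properties of the integral operator) produces the bound
$$\|\nabla e^{t\Delta}w\|_{L^p(\Omega)} \le C\, t^{-\frac{n}{2}\left(\frac{1}{q}-\frac{1}{p}\right)}\|w\|_{L^q(\Omega)}$$
on $0 < t \le 1$, where the factor $e^{-\lambda_1 t}$ is for free since it is bounded below on this interval.

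For the long-time regime $t \ge 1$, I would factor $e^{t\Delta} = e^{\Delta}\circ e^{(t-1)\Delta}$. Restricted to $X_0$, the spectral expansion plus Parseval give $\|e^{s\Delta}f\|_{L^2} \le e^{-\lambda_1 s}\|f\|_{L^2}$ for all $s \ge 0$ and $f \in X_0$, and this can be promoted to $L^q \to L^q$ contractivity on $X_0$ (up to a harmless constant) by combining the Markov property of $p(t,x,y)$ with duality and interpolation. The residual one-step operator $e^{\Delta}$ is smoothing from $L^q$ into $W^{1,p}$ by the short-time estimate evaluated at $t=1$. Composing the two steps yields $\|\nabla e^{t\Delta}(w-\bar{w})\|_{L^p} \le C e^{-\lambda_1 t}\|w\|_{L^q}$ for $t \ge 1$, in which the polynomial factor $t^{-\frac{n}{2}(1/q - 1/p)}$ is bounded. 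Gluing the two regimes produces the single envelope $C(1 + t^{-\frac{n}{2}(1/q - 1/p)})e^{-\lambda_1 t}$ claimed in the lemma.

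The main obstacle is securing the sharp short-time gradient heat-kernel bound under Neumann boundary conditions; although classical, the pointwise $t^{-(n+1)/2}$ factor has to be produced cleanly up to $\partial\Omega$, which requires either invoking a Davies-type off-diagonal estimate or carefully executing a near-boundary reflection argument with $C^{1,\alpha}$ control of the kernel. A secondary technical point is extending the exponential $L^2 \to L^2$ decay on the mean-zero subspace to arbitrary $1 \le q \le p \le \infty$; this is handled by combining $L^q$-contractivity of $e^{t\Delta}$ on $X_0$ with the one-step smoothing via $e^{\Delta}$, as described above.
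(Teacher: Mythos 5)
The paper offers no proof of this lemma --- it is quoted from the literature (Winkler's 2010 \emph{J.\ Differ.\ Equ.} paper, Lemma 1.3(ii)) --- so the comparison is against the standard argument there, and your architecture matches it exactly: split off the mean $\bar w$, use a Gaussian gradient bound on the Neumann heat kernel for small times, and use the spectral gap on the mean-zero subspace together with a one-step smoothing factorization for large times. That is the right skeleton.

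There is, however, a concrete error in the short-time step, and it interacts with the fact that the statement as printed in the paper is itself a misprint. From your own kernel bound $|\nabla_x p(t,x,y)|\leq C t^{-(n+1)/2}\exp(-c|x-y|^2/t)$, Young's convolution inequality with $1+\frac{1}{p}=\frac{1}{r}+\frac{1}{q}$ gives $\|\nabla_x p(t,\cdot,y)\|_{L^r}\leq C t^{-\frac{n+1}{2}}\cdot t^{\frac{n}{2r}}=C t^{-\frac{1}{2}-\frac{n}{2}(\frac{1}{q}-\frac{1}{p})}$, so the short-time estimate you obtain is $\|\nabla e^{t\Delta}w\|_{L^p}\leq C t^{-\frac{1}{2}-\frac{n}{2}(\frac{1}{q}-\frac{1}{p})}\|w\|_{L^q}$, not $C t^{-\frac{n}{2}(\frac{1}{q}-\frac{1}{p})}\|w\|_{L^q}$: the factor $t^{-1/2}$ coming from the loss of one derivative has been silently dropped. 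This is not a removable constant --- the lemma as literally stated is false (take $p=q$ and let $t\to 0^+$; the claimed bound would force $\|\nabla w\|_{L^p}\leq 2C\|w\|_{L^p}$ for every $w\in L^p$, which is absurd). The correct version of the lemma carries the exponent $-\frac{1}{2}-\frac{n}{2}(\frac{1}{q}-\frac{1}{p})$, and that is in fact the exponent the authors use when they apply the lemma, e.g.\ in \eqref{Le9} and \eqref{3.752}. So you should correct the Young's-inequality computation and prove the statement with the extra $t^{-1/2}$; the rest of your argument (the large-time factorization $e^{t\Delta}=e^{\Delta}\circ e^{(t-1)\Delta}$ on the mean-zero subspace and the gluing of the two regimes) then goes through as sketched.
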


The following statement on local existence and extensibility can be verified by adapting arguments well-established in the context of related chemotaxis problems such as that detailed e.g. in \cite{WMZ2014JDE,WWW2012SIAM,Winkler2010CPDE}, to the present setting in a straightforward manner, so that we may refrain from presenting an elaborate proof here.
\begin{lemma}\label{Le3}
Let $\Omega \subset \mathbb{R}^{N}$ $(N \geq 1)$ be a smooth bounded domain with smooth boundary. Assume that \eqref{origin2} holds. Then there exist $T_{max}\in(0,\infty]$ and a uniquely determined triple of functions
\begin{equation}
\left\{\aligned
& u \in C^{0}(\overline{\Omega}\times [0,T_{max})\cap C^{2,1}(\overline\Omega\times(0,T_{max})),\\
& v \in \bigcap_{q>\max\{N,2\}}C^{0}([0,T_{max});W^{1,q}(\Omega))\cap C^{2,1}(\overline{\Omega}\times (0,T_{max})),\\
& w \in C^{2,0}(\overline{\Omega}\times (0,T_{max}))\\
\endaligned\right.
\end{equation}
such that $u,v,w$ are positive in $\overline\Omega\times(0,T_{max})$ and $(u,v,w)$ solves \eqref{origin} in the classical sense in $\Omega\times (0,T_{max})$, then if $T_{max}<\infty$,
$$\limsup\limits_{t\nearrow T_{max}}\left\{\|u(\cdot,t)\|_{L^\infty(\Omega)}+\|v(\cdot,t)\|_{L^{\infty}(\Omega)}+\|w(\cdot,t)\|_{W^{1,\infty}(\Omega)}\right\}
=\infty.$$
\end{lemma}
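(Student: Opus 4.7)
The plan is to follow the now-classical Banach fixed-point strategy used for chemotaxis problems (compare \cite{WMZ2014JDE,WWW2012SIAM,Winkler2010CPDE}), exploiting the fact that the third equation is elliptic, which decouples $w$ from the dynamics. Fix $q>\max\{N,2\}$, $R>0$ and $T\in(0,1]$ to be chosen, and work in the closed bounded subset
\begin{equation*}
S_T=\bigl\{(\tilde u,\tilde v)\in C^{0}([0,T];C^{0}(\overline\Omega))\times C^{0}([0,T];W^{1,q}(\Omega)):\ \|\tilde u-u_0\|\le R,\ \|\tilde v-v_0\|\le R\bigr\}
\end{equation*}
equipped with the natural product norm. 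I would define a solution map $\Phi:(\tilde u,\tilde v)\mapsto(u,v)$ by carrying out three sub-steps in the following order, which is dictated by the circular dependence of $u,v,w$ on each other's gradients.

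First, given $\tilde u(\cdot,t)$, solve the elliptic Neumann problem $0=\Delta w-w+\tilde u$ for $w(\cdot,t)$. Standard elliptic regularity yields $w\in W^{2,q}(\Omega)\hookrightarrow C^{1}(\overline\Omega)$ with norms controlled by $\|\tilde u\|_{L^{q}}$, so $\nabla w\in L^{\infty}$. Second, with this $w$ in hand, solve the linear parabolic Neumann problem $v_t=\Delta v+\nabla\cdot(v\nabla w)-v+\tilde u$ for $v$; the coefficient $\nabla w$ being bounded, $W_q^{2,1}$ maximal Sobolev regularity (or Amann's theory) supplies $v$ in $C^{0}([0,T];W^{1,q})\cap C^{2,1}$. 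Third, with $v,w$ determined, solve the linear parabolic Neumann problem $u_t=\Delta u-\nabla\cdot(u\nabla v)+\xi\nabla\cdot(u\nabla w)+au-\mu\tilde u^{\alpha-1}u$ (freezing the superlinear absorption) for $u$. Elementary semigroup estimates combined with Lemma \ref{semigroup} show that for $T$ sufficiently small, $\Phi$ maps $S_T$ into itself and is a strict contraction, so Banach's fixed-point theorem produces the desired local solution. Parabolic Schauder theory then upgrades the regularity to the classes stated in the lemma.

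Positivity follows from maximum principles applied in the right order: writing the $u$-equation as $u_t-\Delta u+(\nabla v-\xi\nabla w)\cdot\nabla u-u\bigl[\Delta v-\xi\Delta w+a-\mu u^{\alpha-1}\bigr]=0$ with bounded coefficients, the (weak and then strong) maximum principle together with $u_0\ge 0,\,u_0\not\equiv 0$ gives $u>0$ on $(0,T_{\max})$; the elliptic maximum principle applied to $-\Delta w+w=u>0$ then yields $w>0$; and the strong parabolic maximum principle on the $v$-equation, using $v_0>0$ and the nonnegative source $u$, gives $v>0$. Uniqueness is obtained by differencing two solutions with identical initial data, multiplying the $u$- and $v$-equations by $u_1-u_2$ and $v_1-v_2$ respectively, exploiting $L^{\infty}$-bounds on the coefficients on any compact subinterval of $(0,T_{\max})$, estimating $\|w_1-w_2\|_{W^{2,q}}\le C\|u_1-u_2\|_{L^{q}}$ from the elliptic equation, and closing a Grönwall argument.

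For the blow-up criterion, suppose to the contrary that $T_{\max}<\infty$ while the three norms on the right side of the displayed formula stay bounded uniformly in $t\in(0,T_{\max})$ by some $M$. Then $\|\nabla w(\cdot,t)\|_{L^{\infty}}\le CM$ by elliptic regularity, so the $v$-equation has bounded drift and source; parabolic $L^{p}$-theory and Hölder regularity (De Giorgi--Nash or Lemma \ref{Le7}) then furnish a uniform-in-time bound on $v$ in some $C^{2+\sigma,1+\sigma/2}$-class, whence $\nabla v$ is uniformly bounded as well. The $u$-equation then has bounded drift and, by the $L^{\infty}$-bound on $u$, also bounded right-hand side, yielding uniform Schauder bounds on $u$ up to $t=T_{\max}$. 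Taking $(u(\cdot,t_0),v(\cdot,t_0))$ for $t_0$ close to $T_{\max}$ as new initial data and reapplying the local existence step furnishes an extension beyond $T_{\max}$ with a uniform-in-$t_0$ existence time, contradicting the maximality of $T_{\max}$. The main technical obstacle is precisely this continuation step: one must convert pure $L^{\infty}$ (plus $W^{1,\infty}$ for $w$) control into enough Hölder regularity to restart the fixed point, and the elliptic third equation is what makes this feasible without further a priori information.
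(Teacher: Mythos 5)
Your proposal is correct and follows exactly the standard Banach fixed-point, maximum-principle, and continuation strategy that the paper itself invokes by reference (it omits the proof, deferring to the arguments in \cite{WMZ2014JDE,WWW2012SIAM,Winkler2010CPDE}). Your sketch — elliptic solve for $w$, then linear parabolic solves for $v$ and $u$, contraction for small $T$, positivity by maximum principles, and extension by restarting from $t_0$ near $T_{\max}$ — is precisely the adaptation the authors have in mind, so there is nothing to correct.
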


In view of Lemma \ref{Le3}, we have the following estimates.
\begin{lemma}\label{Le71}(see \cite{Zheng2018JMAA})
Let $\Omega\subset\mathbb{R}^{N}$ $(N\geq1)$ be a bounded domain with smooth boundary and $T_{max}\in(0,\infty)$. Then one can pick any $s_{0}\leq1$, for any $p>1$, there exists $K>0$ such that for all $\tau\in[0,s_{0}]$ satisfying
\begin{align}\label{72}
\parallel v(\cdot,\tau)\parallel_{L^{p}(\Omega)}+\parallel u(\cdot,\tau) \parallel_{L^{p}(\Omega)}+\parallel \Delta w(\cdot,\tau)\parallel_{L^{p}(\Omega)}\leq K.
\end{align}

\end{lemma}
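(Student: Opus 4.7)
The statement asks for a uniform $L^p$ bound on $u(\cdot,\tau)$, $v(\cdot,\tau)$, and $\Delta w(\cdot,\tau)$ over a short initial time interval $[0,s_0]$ with $s_0 \leq 1$. The natural strategy is to simply read these bounds off the continuity assertions of the local existence theorem (Lemma~\ref{Le3}), together with one application of elliptic regularity for the equation governing $w$. No PDE machinery beyond what is already in Section~2 is needed.

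First, I would fix $s_0 \in (0,\min\{1,T_{max}\})$, so that $[0,s_0]$ is a compact subinterval of $[0,T_{max})$. Since Lemma~\ref{Le3} provides $u\in C^0(\overline\Omega\times[0,T_{max}))$, the function $u$ is continuous on the compact set $\overline\Omega\times[0,s_0]$, hence uniformly bounded there; in particular $\|u(\cdot,\tau)\|_{L^p(\Omega)}$ is uniformly bounded for $\tau\in[0,s_0]$. The initial regularity hypothesis \eqref{origin2} (namely $u_0\in C^0(\overline\Omega)$) is exactly what is needed to get started at $\tau=0$.

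Second, for $v$, Lemma~\ref{Le3} gives $v\in \bigcap_{q>\max\{N,2\}} C^0([0,T_{max});W^{1,q}(\Omega))$. I would pick any $q>\max\{N,2,p\}$; Sobolev embedding yields $W^{1,q}(\Omega)\hookrightarrow L^\infty(\Omega)\hookrightarrow L^p(\Omega)$, so by continuity on the compact interval $[0,s_0]$ the quantity $\|v(\cdot,\tau)\|_{L^p(\Omega)}$ is uniformly bounded. The initial condition $v_0\in W^{1,\infty}(\Omega)$ from \eqref{origin2} covers the starting time $\tau=0$.

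Finally, for $\Delta w$, I would exploit the third equation of \eqref{origin}, which reads $\Delta w - w = -u$ together with the homogeneous Neumann boundary condition. Standard $L^p$ elliptic regularity for this Neumann problem yields a constant $C_p>0$ such that $\|w(\cdot,\tau)\|_{W^{2,p}(\Omega)} \leq C_p\|u(\cdot,\tau)\|_{L^p(\Omega)}$, and in particular $\|\Delta w(\cdot,\tau)\|_{L^p(\Omega)}\leq C_p \|u(\cdot,\tau)\|_{L^p(\Omega)}$ for all $\tau\in[0,s_0]$. Combining the three uniform bounds produces the desired constant $K$. Since the lemma is essentially a repackaging of Lemma~\ref{Le3} plus one-shot elliptic regularity, there is no genuine obstacle; the only point requiring care is choosing $s_0<T_{max}$ so that the compactness argument applies throughout $[0,s_0]$.
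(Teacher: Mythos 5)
Your proof is correct, and it uses the standard (and essentially inevitable) argument for this kind of short-time uniform estimate. The paper itself does not prove Lemma~\ref{Le71}; it simply cites \cite{Zheng2018JMAA}, so there is no internal proof to compare against. Your route — read off boundedness of $u$ and $v$ on the compact interval $[0,s_0]$ directly from the continuity assertions of the local existence Lemma~\ref{Le3}, then control $\Delta w$ via elliptic $L^p$ regularity applied to $-\Delta w + w = u$ with Neumann data — is exactly what the cited reference does and what one would expect.

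Two minor remarks. First, you correctly identify that the lemma's phrasing ``one can pick any $s_0\leq 1$'' is loose: one must also have $s_0 < T_{max}$ so that $[0,s_0]$ is a compact subinterval of $[0,T_{max})$ where Lemma~\ref{Le3} provides continuity; your normalization $s_0\in(0,\min\{1,T_{max}\})$ is the right fix and matches the paper's own choice $\tau=\min\{1,\tfrac12 T_{max}\}$ in \eqref{L01}. Second, requiring $q>\max\{N,2,p\}$ when invoking $v\in C^0([0,T_{max});W^{1,q}(\Omega))$ is slightly more than needed — $q>\max\{N,2\}$ already gives $W^{1,q}(\Omega)\hookrightarrow L^\infty(\Omega)\hookrightarrow L^p(\Omega)$ on the bounded domain — but this is harmless.
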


\newcommand\HIII{{\bf III}}
\section{ A priori estimates for $N\geq4$}
\setcounter{equation}{0}
In this section, in what follows, we shall focus on deriving the a priori estimates of the local solution in order to extend it to be a global one. Henceforth, let $(u,v,w)$ be the classical solution of \eqref{origin} with the maximal existence time $T_{max}\in(0,\infty]$. Some basic properties can be derived as below. Let
\begin{equation}\label{L01}
\tau :=\mbox{min}\left\{1,\frac{1}{2}T_{max}\right\}.
\end{equation}

\begin{lemma}\label{Le3.1}
Let $\alpha>1$, under the assumptions in Lemma \ref{Le3}, then there exists $C>0$ such that the solution of \eqref {origin} satisfies
\begin{align}\label{Le30}
\int_{\Omega}u(\cdot,t) +\int_{\Omega}v(\cdot,t) \leqslant C \quad\mbox{for all}~~t\in(0,T_{max}).
\end{align}
\end{lemma}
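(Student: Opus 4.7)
The plan is to obtain the two $L^{1}$ bounds separately via standard mass estimates, using the dissipative logistic term for $u$ and the decay term $-v$ for $v$.

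First I would integrate the $u$-equation over $\Omega$. The Neumann boundary conditions $\partial_\nu u = \partial_\nu v = \partial_\nu w = 0$ make the three divergence terms vanish after applying the divergence theorem, leaving
\begin{equation*}
\frac{d}{dt}\int_\Omega u = a\int_\Omega u - \mu\int_\Omega u^\alpha.
\end{equation*}
Since $\alpha>1$, Hölder's inequality yields $\int_\Omega u \le |\Omega|^{(\alpha-1)/\alpha}\bigl(\int_\Omega u^\alpha\bigr)^{1/\alpha}$, so $\int_\Omega u^\alpha \ge |\Omega|^{1-\alpha}\bigl(\int_\Omega u\bigr)^{\alpha}$. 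Setting $y(t):=\int_\Omega u(\cdot,t)$, we get
\begin{equation*}
y'(t) \le a\, y(t) - \mu|\Omega|^{1-\alpha}\, y(t)^{\alpha}.
\end{equation*}
Young's inequality applied to the linear term absorbs half of the superlinear dissipation, giving an inequality of the form $y'(t)+\tfrac{\mu}{2}|\Omega|^{1-\alpha}y(t)^{\alpha}\le C_1$ with a constant $C_1>0$ depending only on $a$, $\mu$, $\alpha$ and $|\Omega|$. Lemma~\ref{Le2} (with $A=\tfrac{\mu}{2}|\Omega|^{1-\alpha}$, $p=\alpha$, $B=C_1$) then delivers
\begin{equation*}
\int_\Omega u(\cdot,t) \le \max\Bigl\{\textstyle\int_\Omega u_0,\; \bigl(\tfrac{2C_1}{\mu}|\Omega|^{\alpha-1}\bigr)^{1/\alpha}\Bigr\}=:C_2
\qquad\text{for all }t\in(0,T_{max}).
\end{equation*}

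Next I would integrate the $v$-equation. Again the Laplacian integrates to zero, and the convective flux $v\nabla w$ contributes $\int_{\partial\Omega} v\,\partial_\nu w = 0$ by the Neumann condition on $w$. Therefore
\begin{equation*}
\frac{d}{dt}\int_\Omega v + \int_\Omega v = \int_\Omega u \le C_2.
\end{equation*}
A direct Grönwall/ODE comparison (or a second application of Lemma~\ref{Le2} with $p=1$) immediately yields $\int_\Omega v(\cdot,t) \le \max\{\int_\Omega v_0,\,C_2\}$, and adding the two bounds gives the required constant $C$ in \eqref{Le30}.

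There is essentially no obstacle: both estimates follow from integration and elementary ODE comparison, with the superlinearity $\alpha>1$ being exactly what is needed to dominate the linear production $au$ in the first step. The only subtlety worth stating explicitly is the vanishing of the chemotactic/haptotactic flux terms upon integration, which relies precisely on the Neumann conditions for $v$ and $w$ recorded in \eqref{origin}. $\blacksquare$
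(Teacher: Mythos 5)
Your argument is correct and follows the paper's proof essentially line for line: integrate the $u$-equation, apply Hölder to bound $\int_\Omega u^\alpha$ from below by a power of $\int_\Omega u$, absorb the linear term via Young's inequality, invoke Lemma~\ref{Le2}, then integrate the $v$-equation and apply an ODE comparison. You are also more careful than the paper in explicitly noting that the flux terms vanish by the Neumann conditions, and in correctly labelling the lower bound step as Hölder rather than Cauchy--Schwarz.
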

\begin{proof}
Integrating by parts in the first equation of \eqref{origin}, one has
 \begin{align}\label{0L01}
\frac{d}{dt} \int_{\Omega} u =\int_{\Omega}(au-\mu u^{\alpha}) \quad\mbox{for all}~~t\in(0,T_{max}).
\end{align}
Combining with the Cauchy-Schwarz inequality can derive that
\begin{align}\label{0L001}
\frac{d}{dt} \int_{\Omega} u
\leq a\int_{\Omega}u-\frac{\mu}{|\Omega|^{\alpha-1}}\left(\int_{\Omega} u\right)^{\alpha}\quad\mbox{for all}~~t\in(0,T_{max}).
\end{align}
Since $\alpha>1$, using the Young's inequality, we can obtain
\begin{align}\label{0L0001}
\frac{d}{dt} \int_{\Omega} u 
\leq& \frac{\alpha-1}{\alpha}\left(\frac{\mu\alpha}{2|\Omega|^{\alpha-1}} \right)^{-\frac{1}{\alpha-1}}a^{\frac{\alpha}{\alpha-1}}-\frac{\mu}{2|\Omega|^{\alpha-1}}\left(\int_{\Omega} u\right)^{\alpha}\nonumber\\
=& \frac{\alpha-1}{\alpha} \left(\frac{2|\Omega|^{\alpha-1}}{\mu \alpha}\right)^{\frac{1}{\alpha-1}} a^{\frac{\alpha}{\alpha-1}}-\frac{\mu}{2|\Omega|^{\alpha-1}}\left(\int_{\Omega} u\right)^{\alpha}\quad\mbox{for all}~~t\in(0,T_{max}).
\end{align}
Moreover, integrating \eqref{0L0001} and using Lemma \ref{Le2}, one has
\begin{align}\label{L11}
& \int_{\Omega} u\leq \mbox{max} \left\{\int_{\Omega}u_{0},C_{1}\mu^{-\frac{1}{\alpha-1}}\right\} \quad\mbox{for all}~~t\in(0,T_{max}),
\end{align} \\
where $$C_{1}=(\alpha-1)^{\frac{1}{\alpha}}|\Omega|(\frac{2}{\alpha})^{\frac{1}{\alpha-1}}a^{\frac{1}{\alpha-1}}.$$
Integrating the second equation in \eqref{origin}, we can draw that
\begin{align}\label{L2}
&\frac{d}{dt} \int_{\Omega} u +\int_{\Omega}v=\int_{\Omega}u \quad\mbox{for all}~~t\in(0,T_{max}).
\end{align}
From the above estimate \eqref{L11}, we can see that there is a positive constant $C_{2}$ such that
\begin{align}\label{L21}
\int_{\Omega}v\leq C_{2} \quad\mbox{for all}~~t\in(0,T_{max}).
\end{align}
Combining \eqref{L11} and \eqref{L21} can obtain the estimate \eqref{Le30}. Hereto the proof is completed.
\end{proof}

By means of the first equation in \eqref{origin}, we can obtain that the spatiotemporal estimate of $u$.
\begin{lemma}\label{Le3.0}
Let $\alpha>1$. There exists a positive constant $C$ such that
\begin{equation}
\int^{t+\tau}_{t}\int_{\Omega}u^{\alpha}(\cdot,s)dxds\leq C \quad\mbox{for all}~~t\in(\tau,T_{max}),
\end{equation}
where $\tau$ is given by \eqref{L01}.
\end{lemma}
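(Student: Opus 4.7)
The plan is to extract the spatio-temporal estimate directly from the $L^1$ identity already derived in the proof of Lemma \ref{Le3.1}. Specifically, equation \eqref{0L01} reads
\begin{equation*}
\frac{d}{dt}\int_\Omega u = a\int_\Omega u - \mu \int_\Omega u^\alpha,
\end{equation*}
so that $\mu \int_\Omega u^\alpha$ can be isolated and expressed as a combination of $\int_\Omega u$ and $\frac{d}{dt}\int_\Omega u$. The idea then is to integrate this identity in time over the window $(t,t+\tau)$ and let Lemma \ref{Le3.1} absorb the pointwise-in-time mass terms.

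Concretely, I would rewrite the identity as
\begin{equation*}
\mu \int_\Omega u^\alpha(\cdot,s) = a\int_\Omega u(\cdot,s) - \frac{d}{ds}\int_\Omega u(\cdot,s),
\end{equation*}
integrate in $s$ from $t$ to $t+\tau$, and obtain
\begin{equation*}
\mu \int_t^{t+\tau}\!\!\int_\Omega u^\alpha = a \int_t^{t+\tau}\!\!\int_\Omega u \;+\; \int_\Omega u(\cdot,t) - \int_\Omega u(\cdot,t+\tau).
\end{equation*}
Then I would invoke Lemma \ref{Le3.1}, which gives a uniform bound $\int_\Omega u(\cdot,s)\le C'$ on $(0,T_{max})$, to estimate both the time-integrated term (bounded by $a\tau C'$, and $\tau\le 1$ by definition \eqref{L01}) and the two endpoint terms (each bounded by $C'$, with the sign of the second one actually helping). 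Dividing by $\mu>0$ yields the claim with $C := \mu^{-1}(aC' + 2C')$.

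There is essentially no analytic obstacle here: the lemma is a direct corollary of the mass identity and the uniform $L^1$ bound, and the cutoff $\tau = \min\{1, T_{max}/2\}$ only enters to keep the time-window bounded in the short-time regime. The only minor care needed is to make sure we work with $t\in(\tau,T_{max})$ so that $t+\tau$ does not exceed $T_{max}$ in a problematic way — but this is already how the statement is framed, and Lemma \ref{Le3.1} holds on all of $(0,T_{max})$, so the endpoint evaluations are legitimate. Hence I expect the proof to be a short computation rather than requiring any new technical ingredient.
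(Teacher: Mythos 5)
Your proposal is correct and follows essentially the same route as the paper: integrate the mass identity \eqref{0L01} over $(t,t+\tau)$, drop the favorable boundary term, and control the remaining terms with the uniform $L^1$ bound from Lemma \ref{Le3.1}. The only (immaterial) difference is that your bookkeeping correctly keeps $\int_\Omega u(\cdot,t)$ as the surviving endpoint term, whereas the paper's display \eqref{Le3.11} writes $\int_\Omega u(\cdot,t+\tau)$ in its place — a harmless typo, since both are uniformly bounded.
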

\begin{proof}
Integrating the first equation of \eqref{origin} can draw that
\begin{equation}\label{Le3.10}
\frac{d}{dt}\int_{\Omega}u=a\int_{\Omega}u-\mu\int_{\Omega}u^{\alpha}\quad\mbox{for all}~~t\in(0,T_{max}).
\end{equation}
Integrating \eqref{Le3.10} over $(t,t+\tau)$ and using \eqref{L11}, then we have
\begin{align}\label{Le3.11}
 \int^{t+\tau}_{t}\int_{\Omega}u^{\alpha}   
\leq &\frac{1}{\mu}\left[\int_{\Omega}u(\cdot,t+\tau)+a \tau ~\sup\limits_{s\in(t,t+\tau)}\int_{\Omega} u(\cdot,s
)\right]\nonumber\\
\leq &  \frac{a+1}{\mu} \int_{\Omega} u\nonumber\\
\leq& C_{1} \quad\mbox{for all}~~t\in(\tau,T_{max}),
\end{align}
where $\tau$ is given by \eqref{L01} and $C_1$ is a positive constant. In consequence, the proof of this lemma is finished.
\end{proof}

By means of \eqref{Le30} and using the equation of $w$, we can see that $w$ does possess the better property than $L^1$-boundedness.
\begin{lemma}\label{Le030.2}
Under the assumptions in lemma \ref{Le3}, then there exists a positive constant $C$ satisfying
\begin{equation}
\parallel w(\cdot,t)\parallel_{L^{l_0}(\Omega)}\leq C \quad\mbox{for all}~~t\in(0,T_{max}).
\end{equation}
where $l_0\in[1,\frac{N}{(N-2)_+})$.
\end{lemma}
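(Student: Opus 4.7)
The third equation $0=\Delta w-w+u$ with homogeneous Neumann boundary conditions makes $w(\cdot,t)$ the solution of the elliptic problem $-\Delta w+w=u(\cdot,t)$, so the result is essentially an elliptic regularity statement: an $L^1$ bound on the source $u$ should yield $L^{l_0}$ control of $w$ for every $l_0<N/(N-2)_+$. Lemma \ref{Le3.1} already supplies $\|u(\cdot,t)\|_{L^1(\Omega)}\leq C$ uniformly in $t\in(0,T_{\max})$, so everything reduces to an elliptic smoothing argument applied pointwise in time.

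My plan is to represent $w(\cdot,t)=(I-\Delta_{\mathcal N})^{-1}u(\cdot,t)$ through the Neumann heat semigroup via
\begin{equation*}
w(\cdot,t)=\int_{0}^{\infty}e^{-s}\,e^{s\Delta}u(\cdot,t)\,ds,
\end{equation*}
which is valid on $\Omega$ with Neumann boundary conditions since $-\Delta+I$ is a positive self-adjoint operator. Taking the $L^{l_0}$-norm and applying the standard Neumann heat semigroup estimate (cf.\ Lemma \ref{semigroup}, in its non-gradient version)
\begin{equation*}
\|e^{s\Delta}\varphi\|_{L^{l_0}(\Omega)}\leq C\bigl(1+s^{-\frac{N}{2}(1-\frac{1}{l_0})}\bigr)\|\varphi\|_{L^{1}(\Omega)},\qquad s>0,
\end{equation*}
to $\varphi=u(\cdot,t)$ yields
\begin{equation*}
\|w(\cdot,t)\|_{L^{l_0}(\Omega)}\leq C\|u(\cdot,t)\|_{L^{1}(\Omega)}\int_{0}^{\infty}e^{-s}\bigl(1+s^{-\frac{N}{2}(1-\frac{1}{l_0})}\bigr)\,ds.
\end{equation*}

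The remaining task is to verify that the $s$-integral is finite. The exponential factor guarantees integrability at infinity, while near $s=0$ the integrand is $O(s^{-\frac{N}{2}(1-1/l_0)})$, which is integrable exactly when $\frac{N}{2}(1-\frac{1}{l_0})<1$, i.e.\ $l_0<\frac{N}{N-2}$ for $N\geq 3$ and $l_0<\infty$ for $N\in\{1,2\}$. This is precisely the condition $l_0\in[1,N/(N-2)_+)$ in the statement. Combining with the $L^1$-bound on $u$ from Lemma \ref{Le3.1} delivers a time-independent constant $C$, as required.

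The only point that requires a little care is the semigroup representation of $(I-\Delta_{\mathcal N})^{-1}$ and the precise form of the smoothing estimate with the factor $1+s^{-\frac{N}{2}(1-1/l_0)}$; an equivalent route is to invoke standard elliptic $L^p$ theory (for $N\geq 3$, the Sobolev embedding $W^{2,q}\hookrightarrow L^{l_0}$ for $q$ slightly larger than $1$), but the semigroup formulation is cleaner and already available in the toolkit listed in Section 2. No essential obstacle is expected since the argument decouples $w$ from the coupling structure of \eqref{origin} and only uses the elliptic character of the $w$-equation together with the already-established $L^1$-bound on $u$.
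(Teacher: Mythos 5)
Your argument is correct, but it follows a genuinely different route from the paper's. The paper observes that $w$ solves $-\Delta w + w = u$ and invokes the classical $L^1$ elliptic regularity result of Br\'{e}zis--Strauss, which converts the uniform $L^1$ bound on $u$ into a $W^{1,l_1}(\Omega)$ bound on $w$ for every $l_1 \in [1, N/(N-1)_+)$; the claimed $L^{l_0}$ estimate then follows by the Sobolev embedding $W^{1,l_1}\hookrightarrow L^{l_0}$ with $l_0 < \tfrac{N l_1}{N-l_1}\nearrow \tfrac{N}{(N-2)_+}$. You instead write $w=(I-\Delta_{\mathcal N})^{-1}u$ as the Laplace-transform integral $\int_0^\infty e^{-s}e^{s\Delta}u\,ds$ and use the $L^1\to L^{l_0}$ smoothing of the Neumann heat semigroup (without the spectral-gap exponential, correctly, since the mean is not filtered out), obtaining the same threshold $l_0<N/(N-2)_+$ from the integrability condition $\tfrac{N}{2}(1-\tfrac{1}{l_0})<1$ at $s=0$. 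Both arguments are standard and deliver the stated bound; the paper's gets the slightly stronger $W^{1,l_1}$ information as a by-product (which is harmless but unused here), while yours is more self-contained in that it only uses the semigroup machinery already set up in Section 2 rather than an external reference to Br\'{e}zis--Strauss.
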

\begin{proof}
 According to the third equation of \eqref{origin} and Lemma \ref{Le3.1}, there exists a constant $C_1>0$ such that $\int_{\Omega}u\leq C_1$, $t\in(0,T_{max})$. Therefore, invoking the classical result by Br$\acute{\mathrm{e}}$zis and Strauss \cite{HW1973JMSJ} as well as the Minkowski inequality, for any $l_1\in[1,\frac{N}{(N-1)_+})$, we conclude that there exist constants $C_2>0$ and $C_3>0$ as well as $C_4>0$ such that
\begin{align}\label{Le7.111221}
\parallel w\parallel_{W^{1,l_1}(\Omega)}\leq& C_2\parallel \Delta w-w\parallel_{L^{1}(\Omega)}\nonumber\\
\leq& C_3\parallel u\parallel_{L^{1}(\Omega)}\leq C_4\quad\mbox{for all}~~t\in(0,T_{max}).
\end{align}
This together with the Sobolev embedding theorem implies that
\begin{align}\label{Le7.111222}
&\parallel w\parallel_{L^{l_0}(\Omega)}\leq C_5\quad\mbox{for all}~~t\in(0,T_{max})~~\quad\mbox{and}~~l_0\in\left[1,\frac{N}{(N-2)_+}\right)
\end{align}
with a positive constant $C_5$.
\end{proof}

Next, we will give the estimates of $u$ in $L^l$ and $v$ in $L^{p_0}$ space by the standard testing procedures method, which will be used to establish existence result of the global and bounded classical solution.
\begin{lemma}\label{Le3.2}
Let $N\geq4$ and $\alpha>\frac{(N-2)(N+2)}{2N}$, for any $1<p_{0}<\frac{2\alpha}{N-2}-\frac{2}{N}$, then there exists constant $C>0$ such that
\begin{equation}
\parallel v(\cdot,t)\parallel_{L^{p_{0}}(\Omega)}\leq C \quad\mbox{for all}~~t\in(0,T_{max}).
\end{equation}
\end{lemma}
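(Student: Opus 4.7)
The plan is to derive, for $y(t):=\int_\Omega v^{p_0}$, a differential inequality of the form $y'(t)+y(t)\leq h(t)$ with $\int_{t-\tau}^{t}h(s)\,ds\leq C$ uniformly in $t\in(\tau,T_{\max})$, and then invoke Lemma~\ref{Le4} to conclude.

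First I would test the $v$-equation in \eqref{origin} against $v^{p_0-1}$. Integration by parts, combined with the elliptic identity $\Delta w=w-u$ from the third equation, yields
\[\tfrac{1}{p_0}\tfrac{d}{dt}\!\int_\Omega v^{p_0}+(p_0-1)\!\int_\Omega v^{p_0-2}|\nabla v|^2+\int_\Omega v^{p_0}+\tfrac{p_0-1}{p_0}\!\int_\Omega v^{p_0}u=\tfrac{p_0-1}{p_0}\!\int_\Omega v^{p_0}w+\int_\Omega v^{p_0-1}u.\]
The borderline term $\int_\Omega v^{p_0-1}u$ is controlled via the H\"older split $\int v^{p_0-1}u\leq(\int v^{p_0}u)^{(p_0-1)/p_0}(\int u)^{1/p_0}$ combined with the $L^1$-bound on $u$ from Lemma~\ref{Le3.1} and Young's inequality, giving $\int v^{p_0-1}u\leq\delta\!\int v^{p_0}u+C_\delta$; for $\delta$ small the $\delta$-term is absorbed into the existing $\int v^{p_0}u$-term on the left.

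The crux is then to dominate $\int_\Omega v^{p_0}w$. I would apply H\"older with conjugate exponents $s,s'$ to get $\int v^{p_0}w\leq\|v\|_{L^{p_0 s}}^{p_0}\|w\|_{L^{s'}}$, then invoke the $L^p$-elliptic regularity $\|w\|_{W^{2,\alpha}}\leq C\|u\|_{L^\alpha}$ for $-\Delta w+w=u$, followed by the Sobolev embedding $W^{2,\alpha}\hookrightarrow L^{s'}$, to produce $\|w\|_{L^{s'}}\leq C\|u\|_{L^\alpha}$ for a suitable $s'$. Writing $y_0:=v^{p_0/2}$, the Gagliardo--Nirenberg inequality of Lemma~\ref{GN}, anchored at the uniform $L^1$-bound on $v$ from Lemma~\ref{Le3.1}, gives $\|y_0\|_{L^{2s}}^2\leq C(\|\nabla y_0\|_{L^2}^{2\theta}+1)$, with $\theta\in(0,1)$ determined by $\tfrac{1}{2s}=\theta(\tfrac{1}{2}-\tfrac{1}{N})+(1-\theta)\tfrac{p_0}{2}$. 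Combining these bounds and applying Young's inequality with conjugate exponents $1/\theta,1/(1-\theta)$ dominates $\int v^{p_0}w$ by $\epsilon\|\nabla y_0\|_{L^2}^2+C_\epsilon\|u\|_{L^\alpha}^{1/(1-\theta)}+C\|u\|_{L^\alpha}$, and choosing $\epsilon$ small enough absorbs the gradient contribution into $(p_0-1)\!\int v^{p_0-2}|\nabla v|^2=\tfrac{4(p_0-1)}{p_0^2}\|\nabla y_0\|_{L^2}^2$ on the left.

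The delicate algebraic step — and the main obstacle I foresee — is to verify that the standing hypotheses $\alpha>\tfrac{(N-2)(N+2)}{2N}$ and $1<p_0<\tfrac{2\alpha}{N-2}-\tfrac{2}{N}$ permit a choice of $s$ (hence of $s'$) making $1/(1-\theta)\leq\alpha$, equivalently $\theta\leq(\alpha-1)/\alpha$; this reduces to a rational inequality in $p_0,\alpha,N$ derived from the scaling identity above. Once secured, the resulting ODE $y'(t)+y(t)\leq h(t)$ holds with $h(t)=C+C\|u(\cdot,t)\|_{L^\alpha}^{1/(1-\theta)}+C\|u(\cdot,t)\|_{L^\alpha}$; H\"older in time together with the spatiotemporal $L^\alpha$-bound of Lemma~\ref{Le3.0} yields $\int_{t-\tau}^{t}h(s)\,ds\leq C$, and Lemma~\ref{Le4} then delivers the desired uniform $L^{p_0}$-bound on $v$. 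In the regime $\alpha\geq N/2$ the Sobolev step is replaced by the direct bound $\|w\|_{L^\infty}\leq C\|u\|_{L^\alpha}$, leading to a parallel argument with $s=1$.
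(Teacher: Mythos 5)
Your overall plan — test with $v^{p_0-1}$, exploit $\Delta w=w-u$ to produce exact cancellation of the $\tfrac{p_0-1}{p_0}\int v^{p_0}u$ term, dominate the surviving cross term $\int_\Omega v^{p_0}w$ by the gradient dissipation plus a quantity controlled by the spatiotemporal $L^\alpha$-bound of Lemma~\ref{Le3.0}, and close via Lemma~\ref{Le4} — is correct and matches the paper's skeleton, but your treatment of the key term $\int_\Omega v^{p_0}w$ is genuinely different. You split first by H\"older in space, $\int v^{p_0}w\leq\|v\|_{L^{p_0 s}}^{p_0}\|w\|_{L^{s'}}$, then pass $w$ through elliptic regularity plus the Sobolev embedding $W^{2,\alpha}\hookrightarrow L^{s'}$ to get a single power of $\|u\|_{L^\alpha}$, and only then apply Gagliardo--Nirenberg to $v^{p_0/2}$ and Young with exponents $1/\theta,1/(1-\theta)$. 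The paper instead performs a pointwise Young split $v^{p_0}w\leq\varepsilon v^{p_0+2/N}+C w^q$ with $q=\tfrac{N}{2}(p_0+\tfrac{2}{N})$, applies Gagliardo--Nirenberg separately to $\int v^{p_0+2/N}$ (anchored at $\|v\|_{L^1}$) and to $\int w^q$ (interpolating between $\|\Delta w\|_{L^\alpha}$ and the Lemma~\ref{Le030.2} bound $\|w\|_{L^{l_0}}$, $l_0<N/(N-2)$). Your route avoids Lemma~\ref{Le030.2} entirely, which is a genuine simplification; the trade-off is that you need the embedding exponent $s'$ to simultaneously satisfy $s'\leq N\alpha/(N-2\alpha)$ and $s=s'/(s'-1)<N/(N-2)$, the latter forcing $\alpha>N/4$ (which does follow from $\alpha>\tfrac{(N-2)(N+2)}{2N}$ for $N\geq4$).

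On the step you flag but leave open: the computation does close. Taking the extremal $s'=N\alpha/(N-2\alpha)$ (for $\alpha<N/2$; the case $\alpha\geq N/2$ you already dispose of via $W^{2,\alpha}\hookrightarrow L^\infty$) gives $\tfrac{1}{s}=1-\tfrac{1}{\alpha}+\tfrac{2}{N}$, and the Gagliardo--Nirenberg scaling identity yields
\[
1-\theta=\frac{\tfrac{1}{2s}-\tfrac12+\tfrac{1}{N}}{\tfrac{p_0}{2}-\tfrac12+\tfrac{1}{N}}=\frac{\tfrac{2}{N}-\tfrac{1}{2\alpha}}{\tfrac{p_0}{2}-\tfrac12+\tfrac{1}{N}},
\]
so the requirement $1/(1-\theta)\leq\alpha$ reduces to $p_0\leq\tfrac{4\alpha}{N}-\tfrac{2}{N}$. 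For $N=4$ this is exactly $p_0\leq\alpha-\tfrac12$, agreeing with the hypothesis $p_0<\tfrac{2\alpha}{N-2}-\tfrac{2}{N}$, and for $N>4$ one has $\tfrac{4\alpha}{N}-\tfrac{2}{N}>\tfrac{2\alpha}{N-2}-\tfrac{2}{N}$, so your window is in fact a little wider than what the lemma claims. Thus your method covers the stated range; but to turn the proposal into a proof you should actually record this exponent bookkeeping (including the ancillary checks $\theta\in(0,1)$ and $s<N/(N-2)$), exactly as the paper records the analogous condition $q\xi_1\leq\alpha$.
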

\begin{proof}
Multiplying both sides of the second equation in \eqref{origin} by $v^{p_0-1}$, using integrations by parts, then combining the third equation of system \eqref{origin}, we obtain that
\begin{align}\label{Lp0}
&\int_{\Omega}v^{p_0-1}v_{t}=\int_{\Omega}v^{p_0-1} \Delta v+\int_{\Omega}\nabla\cdot(v\nabla w)v^{p_0-1}-\int_{\Omega}v^{p_0}+\int_{\Omega}uv^{p_0-1}
\end{align}
for all $t\in(0,T_{max})$. Recalling the third equation of \eqref{origin}, we can derive that
\begin{align}\label{Lp}
& \frac{1}{p_0}\frac{d}{dt}\int_{\Omega}v^{p_0}+\left(p_0-1\right)\int_{\Omega}v^{p_0-2}|\nabla v|^{2}\nonumber\\
=&\frac{p_0-1}{p_0}\int_{\Omega}v^{p_0}(w-u)+\int_{\Omega}v^{p_0-1}u-\int_{\Omega}v^{p_0}\nonumber\\
=&\frac{p_0-1}{p_0}\int_{\Omega}v^{p_0}w-\frac{p_0-1}{p_0}\int_{\Omega}v^{p_0}u+\int_{\Omega}v^{p_0-1}u-\int_{\Omega}v^{p_0} \quad\mbox{for all}~~t\in(0,T_{max}).
\end{align}
By means of the Young's inequality, for any $\varepsilon>0$, there exists $C_{1}>0$ such that
\begin{align}\label{Lp1}
\frac{p_0-1}{p_0}\int_{\Omega}v^{p_0}w\leq &\varepsilon \int_{\Omega}v^{p_0+\frac{2}{N}}+C_{1}\int_{\Omega}w^{\frac{p_0+\frac{2}{N}}{\frac{2}{N}}}\nonumber\\
=&\varepsilon \int_{\Omega}v^{p_0+\frac{2}{N}}+C_{1}\int_{\Omega}w^{q}
 \quad\mbox{for all}~~t\in(0,T_{max}),
\end{align}
where $q=\frac{p_0+\frac{2}{N}}{\frac{2}{N}}<\frac{N\alpha}{N-2}$ by using the fact that $p_{0}<\frac{2\alpha}{N-2}-\frac{2}{N}$, which together with the Gagliardo-Nirenberg inequality and the $L^{p}$ theory of elliptic equation as well as Lemma \ref{Le030.2}, we have
\begin{align}\label{3.001}
C_{1}\parallel w\parallel^{q}_{L^{q}(\Omega)}
\leq& C_2(\parallel \Delta w\parallel^{q\xi_1}_{L^{\alpha}(\Omega)}\parallel w\parallel^{q(1-\xi_1)}_{L^{l_0}(\Omega)}+\parallel w\parallel^{q}_{L^{l_0}(\Omega)})\nonumber\\
\leq&C_3(\parallel \Delta w\parallel^{q\xi_1}_{L^{\alpha}(\Omega)}+1)\nonumber\\
\leq&C_4\parallel u\parallel^{q\xi_1}_{L^{\alpha}(\Omega)}+C_3  \quad\mbox{for all}~~t\in(0,T_{max})
\end{align}
with some constants $C_i > 0$ $(i=2,3,4)$ and
$$\xi_1=\frac{\frac{1}{l_0}-\frac{1}{q}}{\frac{2}{N}+\frac{1}{l_0}-\frac{1}{\alpha}}\in(0,1).$$
Here we have used the fact that $q\xi_1\leq\alpha$, where $l_0$ is the same as Lemma \ref{Le030.2}.
Subsequently, collecting \eqref{Lp1} and \eqref{3.001} can draw that
\begin{align}\label{3.002}
\frac{p_0-1}{p_0}\int_{\Omega}v^{p_0}w\leq &\varepsilon \int_{\Omega}v^{p_0+\frac{2}{N}}+C_{1}\int_{\Omega}w^{\frac{p_0+\frac{2}{N}}{\frac{2}{N}}}\nonumber\\
\leq&\varepsilon \int_{\Omega}v^{p_0+\frac{2}{N}}+C_5\parallel u\parallel^{\alpha}_{L^{\alpha}(\Omega)}+C_6 \quad\mbox{for all}~~t\in(0,T_{max})
\end{align}
with some $C_5>0$ and $C_6>0$. According to Lemma \ref{Le3.1}, using the Young's inequality, one has
\begin{align}\label{Lp2}
\int_{\Omega}v^{p_0-1}u\leq& \frac{p_0-1}{p_0}\int_{\Omega}uv^{p_0}+\frac{1}{p_0}\int_{\Omega}u\nonumber\\
 \leq& \frac{p_0-1}{p_0}\int_{\Omega}uv^{p_0}+C_{7} \quad\mbox{for all}~~t\in(0,T_{max}),
\end{align}
where $C_7>0$ is a constant. For the second term in the left hand side of \eqref{Lp}, we can derive that
\begin{align}\label{Lp3}
& (p_0-1)\int_{\Omega}v^{p_0-2}|\nabla v|^{2}=(p_0-1)\frac{4}{p_0^{2}}\parallel\nabla v^{\frac{p_0}{2}}\parallel^{2}_{L^{2}(\Omega)} \quad\mbox{for all}~~t\in(0,T_{max}).
\end{align}
Therefore, using the Gagliardo-Nirenberg interpolation inequality and Lemma \ref{Le3.1}, one can find $C_8>0$ and $C_9>0$ such that
\begin{align}\label{Lp4}
\int_{\Omega}v^{p_0+\frac{2}{N}}=&\int_{\Omega}v^{\frac{p_0}{2}\cdot\frac{2(p_0+\frac{2}{N})}{p_0}}\nonumber\\
=& \parallel v^{\frac{p_0}{2}} \parallel^{\frac{2(p_0+\frac{2}{N})}{p_0}}_{L^{\frac{2(p_0+\frac{2}{N})}{p_0}}(\Omega)}\nonumber\\
\leq& C_{8} \parallel |\nabla v|^{\frac{p_0}{2}} \parallel^{\frac{2(p_0+\frac{2}{N})}{p_0}\cdot \theta}_{{L^{2}(\Omega)}} \parallel v^{\frac{p_0}{2}} \parallel ^{\frac{2(p_0+\frac{2}{N})}{p_0}\cdot (1-\theta)}_{L^{\frac{2}{p_0}}(\Omega)}+C_{8} \parallel v^{\frac{p_0}{2}} \parallel ^{\frac{2(p_0+\frac{2}{N})}{p_0}}_{L^{\frac{2}{p_0}}(\Omega)}\nonumber\\
\leq& C_{9}\left(\parallel |\nabla v|^{\frac{p_0}{2}}\parallel^{\frac{2(p_0+\frac{2}{N})}{p_0}\cdot \theta}_{{L^{2}(\Omega)}}+1\right)\quad\mbox{for all}~~t\in(0,T_{max}),
\end{align}
where $\theta=\frac{\frac{Np_0}{2}-\frac{Np_0}{2(p_0+\frac{2}{N})}}{1-\frac{N}{2}+\frac{Np_0}{2}}$. In view of $\frac{2(p_0+\frac{2}{N})}{p_0}\cdot \theta<2$, with the help of Young's inequality, there exist positive constants $C_{10}$ and $C_{11}$ such that
\begin{align}\label{Lp49991}
\int_{\Omega}v^{p_0+\frac{2}{N}}\leq& C_{10}\parallel |\nabla v|^{\frac{p_0}{2}}\parallel^{2}_{{L^{2}(\Omega)}}+C_{11}\quad\mbox{for all}~~t\in(0,T_{max}).
\end{align}
Thus,
\begin{align}\label{Lp41}
\parallel|\nabla v|^{\frac{p_0}{2}}\parallel^{2}_{L^{2}}\geq \frac{1}{C_{10}}\int_{\Omega}v^{p_0+\frac{2}{N}}-\frac{C_{11}}{C_{10}}\quad\mbox{for all}~~t\in(0,T_{max}).
\end{align}
Combining \eqref{Lp3} with \eqref{Lp41} can show that
\begin{align}\label{Lp42}
\frac{p_0^{2}}{4}\int_{\Omega}v^{p_0-2}|\nabla v|^{2}\geq \frac{1}{C_{10}}\int_{\Omega}v^{p_0+\frac{2}{N}}-\frac{C_{11}}{C_{10}}\quad\mbox{for all}~~t\in(0,T_{max}).
\end{align}
Inserting the above inequality into \eqref{Lp}, we can find $C_{12}>0$ such that
\begin{align}\label{Lp43}
&\frac{1}{p_0}\frac{d}{dt}\int_{\Omega}v^{p_0}+\frac{\left(p_0-1\right)}{C_{10}}\frac{4}{p_0^{2}}
\int_{\Omega}v^{p_0+\frac{2}{N}}\nonumber\\
\leq&\varepsilon \int_{\Omega}v^{p_0+\frac{2}{N}}+C_5\int_{\Omega}u^{\alpha}-\frac{p_0-1}{p_0}\int_{\Omega}v^{p_0}u+\frac{p_0-1}{p_0}\int_{\Omega}
uv^{p_0}
-\int_{\Omega}v^{p_0}
+C_{12}\nonumber\\
=&\varepsilon \int_{\Omega}v^{p_0+\frac{2}{N}}+C_5\int_{\Omega}u^{\alpha}-\int_{\Omega}v^{p_0}+C_{12}
 \quad\mbox{for all}~~t\in(0,T_{max}).
\end{align}
Choosing $\varepsilon <\frac{(p-1)}{C_{10}}\frac{4}{p_0^{2}}$, then
\begin{align}\label{Lp5}
\frac{1}{p_0}\frac{d}{dt}\int_{\Omega}v^{p_0} + \int_{\Omega}v^{p_0}\leq C_5\int_{\Omega}u^{\alpha}+C_{12}\quad\mbox{for all}~~t\in(0,T_{max}).
\end{align}
Based on the result of Lemma \ref{Le3.0}, we can pick $C_{13}>0$ such that
\begin{equation*}
\int^{t+\tau}_{t}\int_{\Omega}u^{\alpha}\leq C_{13} \quad\mbox{for all}~~t\in(\tau,T_{max}).
\end{equation*}
With the help of Lemma \ref{Le4}, we can obtain the result of this lemma.
\end{proof}

As a basic step of a priori estimates, we establish the boundedness of $u$ for the case $N\geq4$.
\begin{lemma}\label{Le3.03}
Let $N\geq4$ and $\alpha>\frac{4N-4+N\sqrt{2N^2-6N+8}}{2N}$. Then for any $l>1$, there is a positive constant $C$ such that the solution of \eqref{origin} has the property
\begin{equation}\label{Le3.000311}
\parallel u(\cdot,t)\parallel_{L^{l}(\Omega)}\leq C \quad\mbox{for all}~~t\in(0,T_{max}).
\end{equation}
\end{lemma}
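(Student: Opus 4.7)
My plan is to derive, for each fixed $l>1$, a differential inequality of the form $\frac{d}{dt}\int_\Omega u^l+\int_\Omega u^l\leq h(t)$ with $\int_t^{t+\tau}h(s)\,ds$ uniformly bounded, and then invoke Lemma \ref{Le4}. First I would test the first equation of \eqref{origin} by $u^{l-1}$, integrate by parts, and substitute $\Delta w=w-u$ from the third equation to obtain
\begin{align*}
&\frac{1}{l}\frac{d}{dt}\int_\Omega u^l + \frac{4(l-1)}{l^2}\int_\Omega|\nabla u^{l/2}|^2 + \mu\int_\Omega u^{l+\alpha-1} \\
&\qquad = -\frac{l-1}{l}\int_\Omega u^l\Delta v + \frac{\xi(l-1)}{l}\int_\Omega u^l(w-u) + a\int_\Omega u^l.
\end{align*}
Since $\alpha>1$, the terms $a\int_\Omega u^l$ and (when $\xi<0$) $\frac{|\xi|(l-1)}{l}\int_\Omega u^{l+1}$ are absorbed into a small fraction of $\mu\int_\Omega u^{l+\alpha-1}$ via Young, while for $\xi>0$ the cubic-type term has the favorable sign and is simply dropped.

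For the convection contribution, Young gives $\frac{|\xi|(l-1)}{l}\int_\Omega u^l w\le\frac{\mu}{8}\int_\Omega u^{l+\alpha-1}+C\int_\Omega w^q$ with $q=\frac{l+\alpha-1}{\alpha-1}$, and the $w$-integral is controlled by Gagliardo-Nirenberg combined with the elliptic estimate $\|\Delta w\|_{L^\alpha}\le C(\|w\|_{L^\alpha}+\|u\|_{L^\alpha})$ and the $L^{l_0}$-bound on $w$ from Lemma \ref{Le030.2}, producing $C(\|u\|_{L^\alpha}^{q\xi_2}+1)$ with some $\xi_2\in(0,1)$; this is integrable over windows of length $\tau$ thanks to the spatiotemporal estimate $\int_t^{t+\tau}\int_\Omega u^\alpha\le C$ of Lemma \ref{Le3.0}. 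Similarly, Young on the chemotaxis term yields $\frac{l-1}{l}\int_\Omega u^l|\Delta v|\le\frac{\mu}{8}\int_\Omega u^{l+\alpha-1}+C\int_\Omega|\Delta v|^\beta$ with $\beta=q$, and Lemma \ref{Le7} applied to the $v$-equation written as $v_t-\Delta v+v=g$ with $g=\nabla v\cdot\nabla w+v(w-u)+u$ reduces the task to an $L^\beta$-estimate of each piece of $g$. Combining H\"older and Gagliardo-Nirenberg with Lemma \ref{Le3.2} ($v\in L^{p_0}$) and Lemma \ref{Le030.2} ($w\in L^{l_0}$), one obtains $\int_t^{t+\tau}\int_\Omega|\Delta v|^\beta\le C+C\int_t^{t+\tau}\|u\|_{L^\alpha}^{\beta\xi_3}$, which is again absorbed via Young and Lemma \ref{Le3.0}.

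After all absorptions the resulting inequality takes the required form and Lemma \ref{Le4} delivers the claim. The main obstacle is the chemotaxis step: the piece $\nabla v\cdot\nabla w$ of $g$ forces one to invoke a higher-order Sobolev bound on $v$, and one must carefully track the Gagliardo-Nirenberg interpolation exponents arising in the estimate of $\int_\Omega|\Delta v|^\beta$ as $\beta$ grows with $l$, ensuring that the exponent of $\|u\|_{L^\alpha}$ produced remains strictly below $1$ so that a final Young's inequality can absorb it into $\mu\int_\Omega u^{l+\alpha-1}$. The restrictions $p_0<\frac{2\alpha}{N-2}-\frac{2}{N}$ from Lemma \ref{Le3.2} and $l_0<\frac{N}{N-2}$ from Lemma \ref{Le030.2} propagate through these calculations, and solving the resulting algebraic inequality on the interpolation exponents produces exactly the quadratic condition $\alpha>\frac{4N-4+N\sqrt{2N^2-6N+8}}{2N}$.
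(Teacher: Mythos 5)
Your proposal and the paper's proof share the same opening move (test the first equation by $u^{l-1}$, use $\Delta w=w-u$, split into the dissipation $\mu\int u^{l+\alpha-1}$ and the two Laplacian integrals $\int|\Delta v|^m$, $\int|\Delta w|^m$ with $m=\frac{l+\alpha-1}{\alpha-1}$, and handle $\int w^q$ via Gagliardo--Nirenberg plus Lemma~\ref{Le030.2}), but you diverge on the chemotaxis piece and on the closure. You propose to control $\int_t^{t+\tau}\int_\Omega|\Delta v|^\beta$ by the maximal $L^p$--$L^q$ regularity in Lemma~\ref{Le7} applied to $v_t-\Delta v+v=g$ and then close with the time-windowed ODE Lemma~\ref{Le4}. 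The paper instead derives a \emph{pointwise-in-time} bound $C_3\int_\Omega|\Delta v|^m\leq\frac{7\mu}{256}\int_\Omega u^{l+\alpha-1}+C$ by inserting the algebraic expansion $\nabla v\cdot\nabla w+v\Delta w+u$ for $\Delta v$ (note that the $v_t$ contribution from the second equation is not visible in \eqref{Le3.000038}, so this step does not follow from elliptic regularity alone; your maximal-regularity route through Lemma~\ref{Le7} handles $v_t$ in the standard way and is the safer path) and then Gagliardo--Nirenberg interpolating each factor $|\nabla v|^{m\theta}$, $|\nabla w|^{m\theta'}$, $v^{m\theta_1}$, $|\Delta w|^{m\theta_1'}$, $u^m$ against Lemmas~\ref{Le030.2} and~\ref{Le3.2}; the resulting differential inequality $y'+cy\leq C$ is then closed directly, without Lemma~\ref{Le4}. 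Two caveats for your variant: (i) the piece $\nabla v\cdot\nabla w$ in $g$ will, after GN, reintroduce $\|\Delta v\|_{L^\beta}$ on the right-hand side, so you need a strict self-absorption — the interpolation exponent hitting $\|\Delta v\|_{L^\beta}$ must stay strictly below the full power $\beta$, exactly the bookkeeping the paper does in \eqref{Le3.0000381}--\eqref{Le3.0000383}, and it is this constraint, together with $p_0<\frac{2\alpha}{N-2}-\frac{2}{N}$ and $l_0<\frac{N}{(N-2)_+}$, that generates the quadratic condition on $\alpha$; (ii) Lemma~\ref{Le7} needs $v(\cdot,s_0)\in W^{2,\beta}(\Omega)$ with the Neumann condition at some $s_0$, which is supplied by Lemma~\ref{Le71} and should be cited. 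With those two points filled in, your route gives a valid alternative proof.
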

\begin{proof}
Let $\alpha>2$. Multiplying the first equation of \eqref{origin} by $u^{l-1}$ and integrating by parts over $\Omega$, we arrive at
\begin{align}\label{Le3.00031}
&\frac{1}{l}\frac{d}{dt}\int_{\Omega}u^{l}+(l-1)\int_{\Omega}u^{l-2}|\nabla u|^{2}\nonumber\\
=&-\frac{(l-1)}{l}\int_{\Omega}u^{l}\Delta v +\frac{\xi(l-1)}{l}\int_{\Omega}u^{l}\Delta w+ a\int_{\Omega}u^{l}-\mu\int_{\Omega}u^{l+\alpha-1}
\end{align}
for all $t\in(0,T_{max})$. Using the Young's inequality can draw that
\begin{align}\label{Le3.00032}
&\frac{1}{l}\frac{d}{dt}\int_{\Omega}u^{l}+(l-1)\int_{\Omega}u^{l-2}|\nabla u|^{2}\nonumber\\
\leq&-\frac{(l-1)}{l}\int_{\Omega}u^{l}\Delta v +\frac{\xi(l-1)}{l}\int_{\Omega}u^{l}\Delta w-\frac{\mu}{2}\int_{\Omega}u^{l+\alpha-1}+C_{1}
\end{align}
for all $t\in(0,T_{max})$ and $C_{1}>0$. For the second term on the right hand side of the above estimate, there exists a constant $C_{2}>0$ satisfying
\begin{align}\label{Le3.00033}
&\frac{\xi(l-1)}{l}\int_{\Omega}u^{l}\Delta w\nonumber\\
\leq& \frac{\mu}{4}\int_{\Omega}u^{l+\alpha-1}+C_{2}\int_{\Omega}|\Delta w|^{\frac{l+\alpha-1}{\alpha-1}} \quad\mbox{for all}~~t\in(0,T_{max}).
\end{align}
Similarly, applying the Young's inequality, if $\alpha>2$, for the first term on the right hand side of \eqref{Le3.00032}, we have
\begin{align}\label{Le3.00034}
-\frac{(l-1)}{l}\int_{\Omega}u^{l}\Delta v
\leq\frac{\mu}{8}\int_{\Omega}u^{l+\alpha-1}+C_{3}\int_{\Omega}|\Delta v|^{\frac{l+\alpha-1}{\alpha-1}} \quad\mbox{for all}~~t\in(0,T_{max}),
\end{align}
where $C_{3}>0$ is a constant. Combining \eqref{Le3.00032}--\eqref{Le3.00034}, we can draw that
\begin{align}\label{Le3.00035}
\frac{1}{l}\frac{d}{dt}\int_{\Omega}u^{l}
\leq -\frac{\mu}{8}\int_{\Omega}u^{l+\alpha-1} +C_{3}\int_{\Omega}|\Delta v|^{\frac{l+\alpha-1}{\alpha-1}}+C_{2}\int_{\Omega}|\Delta w|^{\frac{l+\alpha-1}{\alpha-1}}+C_{1}
\end{align}
for all $t\in(0,T_{max})$. Now, in view of the third equation in \eqref{origin}, we have
\begin{align}\label{Le30.00036}
&C_2\parallel \Delta w\parallel^{\frac{l+\alpha-1}{\alpha-1}}_{L^{\frac{l+\alpha-1}{\alpha-1}}(\Omega)}\nonumber\\
\leq& C_4\parallel  w\parallel^{\frac{l+\alpha-1}{\alpha-1}}_{L^{\frac{l+\alpha-1}{\alpha-1}}(\Omega)}+C_5\parallel  u\parallel^{\frac{l+\alpha-1}{\alpha-1}}_{L^{\frac{l+\alpha-1}{\alpha-1}}(\Omega)} \quad\mbox{for all}~~t\in(0,T_{max})
\end{align}
with $C_i>0$ $(i=4,5)$.
By utilizing the Gagliardo-Nirenberg inequality, Lemma \ref{Le030.2} and the Young's inequality, we can deduce that there are some constants $C_i>0$ $(i={6,7})$ such that
\begin{align}\label{Le30.00035}
&C_4\parallel w\parallel^{\frac{l+\alpha-1}{\alpha-1}}_{L^{\frac{l+\alpha-1}{\alpha-1}}(\Omega)}\nonumber\\
\leq& {C_6}(\parallel \Delta w\parallel^{\frac{l+\alpha-1}{\alpha-1}\xi_2}_{L^{\frac{l+\alpha-1}{\alpha-1}}(\Omega)}\parallel  w\parallel^{\frac{l+\alpha-1}{\alpha-1}(1-\xi_2)}_{L^{l_0}(\Omega)}+\parallel  w\parallel^{\frac{l+\alpha-1}{\alpha-1}}_{L^{l_0}(\Omega)})\nonumber\\
\leq& \frac{C_2}{2}\parallel \Delta w\parallel^{\frac{l+\alpha-1}{\alpha-1}}_{L^{\frac{l+\alpha-1}{\alpha-1}}(\Omega)}+{C_7} \quad\mbox{for all}~~t\in(0,T_{max}),
\end{align}
where $\xi_2=\frac{1-\frac{2}{N}-\frac{\alpha-1}{l+\alpha-1}}{1-\frac{\alpha-1}{l+\alpha-1}}\in(0,1)$. Here, $l_0$ is the same as Lemma \ref{Le030.2}. Combining
\eqref{Le30.00035} with \eqref{Le30.00036}, we have
\begin{align}\label{Le30.000300006}
&\parallel \Delta w\parallel^{\frac{l+\alpha-1}{\alpha-1}}_{L^{\frac{l+\alpha-1}{\alpha-1}}(\Omega)}\nonumber\\
\leq& \frac{C_2}{2} \parallel \Delta w\parallel^{\frac{l+\alpha-1}{\alpha-1}}_{L^{\frac{l+\alpha-1}{\alpha-1}}(\Omega)}+C_5\parallel  u\parallel^{\frac{l+\alpha-1}{\alpha-1}}_{L^{\frac{l+\alpha-1}{\alpha-1}}(\Omega)}+{C_{7}}\quad\mbox{for all}~~t\in(0,T_{max}),
\end{align}
which implies that
\begin{align}\label{Le30.00037}
C_2\parallel \Delta w\parallel^{\frac{l+\alpha-1}{\alpha-1}}_{L^{\frac{l+\alpha-1}{\alpha-1}}(\Omega)}\
\leq& {C_{8}}\parallel  u\parallel^{\frac{l+\alpha-1}{\alpha-1}}_{L^{\frac{l+\alpha-1}{\alpha-1}}(\Omega)}+{C_{9}}\quad\mbox{for all}~~t\in(0,T_{max})
\end{align}
with $C_i>0$ $(i={8,9})$.
Moreover, adding $\frac{l+\alpha-1}{l(\alpha-1)}\int_{\Omega}u^{l}$ on both sides of  \eqref{Le3.00035}. Since $\alpha>2$, by using the Young's inequality can derive that
\begin{align}\label{Le3.00038}
&\frac{1}{l}\frac{d}{dt}\int_{\Omega}u^{l} +\frac{l+\alpha-1}{l(\alpha-1)}\int_{\Omega}u^{l} \nonumber\\
\leq& -\frac{\mu}{8}\int_{\Omega}u^{l+\alpha-1}+\frac{l+\alpha-1}{l(\alpha-1)}\int_{\Omega}u^{l} +C_{2}\int_{\Omega}|\Delta w|^{\frac{l+\alpha-1}{\alpha-1}}+C_{3}\int_{\Omega}|\Delta v|^{\frac{l+\alpha-1}{\alpha-1}}+C_{1}\nonumber\\
\leq&-\frac{\mu}{16}\int_{\Omega}u^{l+\alpha-1}+C_{2}\int_{\Omega}|\Delta w|^{\frac{l+\alpha-1}{\alpha-1}}+C_{3}\int_{\Omega}|\Delta v|^{\frac{l+\alpha-1}{\alpha-1}}+{C_{10}} \nonumber\\
\leq&-\frac{\mu}{16}\int_{\Omega}u^{l+\alpha-1}+{C_{8}}\int_{\Omega}u^{\frac{l+\alpha-1}{\alpha-1}}+C_{3}\int_{\Omega}|\Delta v|^{\frac{l+\alpha-1}{\alpha-1}}+{C_{11}}\nonumber\\
\leq&-\frac{\mu}{32}\int_{\Omega}u^{l+\alpha-1}+C_{3}\int_{\Omega}|\Delta v|^{\frac{l+\alpha-1}{\alpha-1}}+{C_{12}}\quad\mbox{for all}~~t\in(0,T_{max})
\end{align}
with some constants $C_i>0$ $(i={10,11,12})$.

Next, we estimate the second term on the right hand side of the above equation. Since
 $$\alpha>\frac{4N-4+N\sqrt{2N^2-6N+8}}{2N}>\frac{3N-2+\sqrt{N^4-2N^3+N^2+4N+4}}{2N},$$
 then
$\alpha>\frac{4N-4+N\sqrt{2N^2-6N+8}}{2N}$,
we can find $\theta$  fulfilling
$$\frac{N(\alpha-1)}{N\alpha-2N+1}<\theta<\frac{N+2p_0}{p_0+N}.$$
For $\alpha>\frac{3N-2+\sqrt{N^4-2N^3+N^2+4N+4}}{2N}$, we can choose $\theta_1$ satisfying
$$\frac{\alpha-1}{\alpha-2}<\theta_1<1+\frac{2p_0}{N}.$$
Denote $m=\frac{l+\alpha-1}{\alpha-1}$, then there are positive constants $C_i>0$ $(i={13,\cdot\cdot\cdot,19})$ such that
\begin{align}\label{Le3.000038}
&C_{3}\int_{\Omega}|\Delta v|^{m}\nonumber\\
\leq&{C_{13}}\int_{\Omega}|\nabla v \cdot \nabla w+v\Delta w+ u|^{m}\nonumber\\
\leq&{C_{14}}\int_{\Omega}\left[|\nabla v \cdot\nabla w|^{m}+|v\Delta w|^{m}+ u^{m}\right]\nonumber\\
\leq&{C_{15}}\int_{\Omega}|\nabla v |^{m\theta}+{C_{16}}\int_{\Omega}|\nabla w|^{m\theta'}+{C_{17}}\int_{\Omega}v^{m\theta_1}+{C_{18}}\int_{\Omega}|\Delta w|^{m\theta_1'}+{C_{19}}\int_{\Omega}u^{m}
\end{align}
for all $t\in(0,T_{max})$ and $\frac{1}{\theta}+\frac{1}{\theta'}=1$ as well as $\frac{1}{\theta_1}+\frac{1}{\theta'_1}=1$. In order to estimate ${C_{15}}\int_{\Omega}|\nabla v |^{m\theta}$, combining the Gagliardo-Nirenberg interpolation inequality, Young's inequality and Lemma \ref{Le3.2}, we can obtain that there exist constants $C_i>0$ $(i=20,21,22)$ such that
\begin{align}\label{Le3.0000381}
{C_{15}}\parallel\nabla v\parallel^{m\theta}_{L^{m\theta}(\Omega)}
\leq& {C_{20}}(\parallel\Delta v\parallel^{m\theta\xi_3}_{L^{m}(\Omega)}\parallel v\parallel^{m\theta(1-\xi_3)}_{L^{p_0}(\Omega)}+\parallel v\parallel^{m\theta}_{L^{p_0}(\Omega)})\nonumber\\
\leq&{C_{21}}(\parallel\Delta v\parallel^{m\theta\xi_3}_{L^{m}(\Omega)}+1)\nonumber\\
\leq&\frac{C_3}{4}\parallel\Delta v\parallel^{m}_{L^{m}(\Omega)}+{C_{22}}\quad\mbox{for all}~~t\in(0,T_{max}),
\end{align}
where $m\theta\xi_3<m$, $\theta<\frac{N+2p_0}{N+p_{0}}$, $\xi_3=\frac{\frac{1}{N}-\frac{1}{\theta m}+\frac{1}{p_0}}{\frac{2}{N}-\frac{1}{ m}+\frac{1}{p_0}}\in(0,1)$ and $p_0$ is the same as Lemma \ref{Le3.2}.

Similarly, {invoking Young's inequality and the $L^{p}$ theory of elliptic equation, we conclude that there exist
$C_{i}>0$ $(i={23,24,25,26,27})$ such that}
\begin{align}\label{Le3.0000382}
{C_{16}}\parallel\nabla w\parallel^{m\theta'}_{L^{m\theta'}(\Omega)}
\leq& {C_{23}}(\parallel\Delta w\parallel^{m\theta'\xi_4}_{L^{l+\alpha-1}(\Omega)}\parallel w\parallel^{m\theta'(1-\xi_4)}_{L^{l_0}(\Omega)}+\parallel w\parallel^{m\theta'}_{L^{l_0}(\Omega)})\nonumber\\
\leq&{C_{24}}(\parallel\Delta w\parallel^{m\theta'\xi_4}_{L^{l+\alpha-1}(\Omega)}+1)\nonumber\\
\leq&{C_{25}}\parallel\Delta w\parallel^{l+\alpha-1}_{L^{l+\alpha-1}(\Omega)}+{C_{26}}\nonumber\\
\leq&\frac{\mu}{128}\parallel u\parallel^{l+\alpha-1}_{L^{l+\alpha-1}(\Omega)}+{C_{27}}\quad\mbox{for all}~~t\in(0,T_{max}),
\end{align}
where $\theta'<\frac{N(\alpha-1)}{N-1}$, $\xi_4=\frac{\frac{N-1}{N}-\frac{1}{m\theta'}}{1-\frac{1}{l+\alpha-1}}\in(0,1)$.

In order to estimate ${C_{17}}\int_{\Omega}v^{m\theta_1}$ in the estimate \eqref{Le3.000038}. We use Lemma \ref{Le3.2} and Young's inequality, we can derive that
\begin{align}\label{Le3.0000383}
{C_{17}}\parallel v\parallel^{m\theta_1}_{L^{m\theta_1}(\Omega)}
\leq&{C_{28}}(\parallel \Delta v\parallel^{m\theta_1\xi_5}_{L^{m}(\Omega)}\parallel v\parallel^{m\theta_1(1-\xi_5)}_{L^{p_0}(\Omega)}+\parallel v\parallel^{m\theta_1}_{L^{p_0}(\Omega)})\nonumber\\
\leq&{C_{29}}(\parallel \Delta v\parallel^{m\theta_1\xi_5}_{L^{m}(\Omega)}+1)\nonumber\\
\leq&\frac{C_3}{4}\parallel \Delta v\parallel^{m}_{L^{m}(\Omega)}+{C_{30}}\quad\mbox{for all}~~t\in(0,T_{max}),
\end{align}
where $\theta_1<1+\frac{2p_0}{N}$, $\xi_5=\frac{\frac{1}{p_0}-\frac{1}{m\theta_1}}{\frac{2}{N}+\frac{1}{p_0}-\frac{1}{m}}\in(0,1)$ and $C_i$ $(i={28,29,30})$ are positive constants. 

Similarly, {using Young's inequality and the $L^{p}$ theory of elliptic equation}, for any $0<\varepsilon<1$ and $m\theta_1'<l+\alpha-1$, there exist some constants ${C_{31}}>0$ and ${C_{32}}>0$ such that
\begin{align}\label{Le3.0000384}
{C_{18}}\int_{\Omega}|\Delta w|^{m\theta_1'}\leq& \varepsilon\int_{\Omega}|\Delta w|^{l+\alpha-1}+{C_{31}}\nonumber\\
\leq&\frac{\mu}{256}\int_{\Omega}u^{l+\alpha-1}+{C_{32}}\quad\mbox{for all}~~t\in(0,T_{max}),
\end{align}
where $\theta_1'<\frac{l+\alpha-1}{m}$. Moreover,
\begin{align}\label{Le3.0000385}
{C_{19}}\int_{\Omega}u^{m}\leq\frac{\mu}{512}\int_{\Omega}u^{l+\alpha-1} +{C_{33}}\quad\mbox{for all}~~t\in(0,T_{max})
\end{align}
for ${C_{33}}>0$.

Then, inserting \eqref{Le3.0000381}-\eqref{Le3.0000385} into \eqref{Le3.000038}, then there exists a constant ${C_{34}}>0$ such that
\begin{align}\label{Le3.0000386}
C_{3}\int_{\Omega}|\Delta v|^{m}
\leq&\frac{C_3}{2}\parallel\Delta v\parallel^{m}_{L^{m}(\Omega)}+\frac{7\mu}{512}\parallel u\parallel^{l+\alpha-1}_{L^{l+\alpha-1}(\Omega)}+{C_{34}} \quad\mbox{for all}~~t\in(0,T_{max})
\end{align}
for all $t\in(0,T_{max})$. That is, there is constant ${C_{35}}>0$ such that
\begin{align}\label{Le3.00003806}
C_3\int_{\Omega}|\Delta v|^{m}
\leq&\frac{7\mu}{256}\parallel u\parallel^{l+\alpha-1}_{L^{l+\alpha-1}(\Omega)}+{C_{35}} \quad\mbox{for all}~~t\in(0,T_{max}).
\end{align}
Combining \eqref{Le3.00003806} with \eqref{Le3.00038} can draw that there exists a positive constant ${C_{36}}$ such that
\begin{align}\label{Le3.0000387}
&\frac{1}{l}\frac{d}{dt}\int_{\Omega}u^{l} +\frac{l+\alpha-1}{l(\alpha-1)}\int_{\Omega}u^{l} \nonumber\\
\leq& -\frac{\mu}{256}\int_{\Omega}u^{l+\alpha-1}+{C_{36}}\quad\mbox{for all}~~t\in(0,T_{max}).
\end{align}
Then, we have
\begin{equation}
\parallel u\parallel_{L^{l}(\Omega)}\leq C \quad\mbox{for all}~~t\in(0,T_{max}).
\end{equation}
Thus, the proof of this lemma is finished.
\end{proof}

The above lemma shows that the boundedness of $u$ in any $L^{l}(\Omega)$, which implies the boundedness of $\nabla w$ in $L^{\infty}(\Omega)$.
\begin{lemma}\label{Le3.4}
There is constant $C>0$ such that
\begin{equation}
\parallel w(\cdot,t) \parallel_{W^{1,\infty}(\Omega)} \leq C \quad\mbox{for all}~~t\in(0,T_{max}).
\end{equation}
\end{lemma}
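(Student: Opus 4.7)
The plan is to combine the $L^{l}$-boundedness of $u$ from Lemma \ref{Le3.03} with standard elliptic regularity applied to the third equation of \eqref{origin}, and then close the gap to $W^{1,\infty}$ via a Sobolev embedding.

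First, I would rewrite the third equation of \eqref{origin} as $-\Delta w + w = u$ in $\Omega$ with homogeneous Neumann boundary data $\partial_{\nu} w = 0$ on $\partial \Omega$. Because Lemma \ref{Le3.03} provides, for every $l > 1$, a constant $C_{l} > 0$ such that
\begin{equation*}
\|u(\cdot,t)\|_{L^{l}(\Omega)} \leq C_{l} \quad \mbox{for all}~ t \in (0, T_{\max}),
\end{equation*}
the standard $L^{p}$ theory for the Neumann problem associated with $-\Delta + 1$ yields a further constant $\tilde{C}_{l} > 0$ with
\begin{equation*}
\|w(\cdot,t)\|_{W^{2,l}(\Omega)} \leq \tilde{C}_{l}\, \|u(\cdot,t)\|_{L^{l}(\Omega)} \leq \tilde{C}_{l} C_{l} \quad \mbox{for all}~ t \in (0, T_{\max}).
\end{equation*}

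Second, I would fix any $l > N$. Then the Sobolev embedding theorem on the bounded smooth domain $\Omega$ gives $W^{2,l}(\Omega) \hookrightarrow W^{1,\infty}(\Omega)$ (in fact into $C^{1,1-N/l}(\overline{\Omega})$), so that
\begin{equation*}
\|w(\cdot,t)\|_{W^{1,\infty}(\Omega)} \leq C_{\mathrm{emb}}\, \|w(\cdot,t)\|_{W^{2,l}(\Omega)} \leq C \quad \mbox{for all}~ t \in (0, T_{\max}),
\end{equation*}
which is the claim.

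There is essentially no obstacle: the reduction of $W^{1,\infty}$-control to an $L^{l}$-estimate for $u$ via the elliptic equation for $w$ is entirely routine once Lemma \ref{Le3.03} is in hand, and the only mild care needed is to pick the integrability index $l$ strictly larger than $N$ so that the Sobolev embedding into $W^{1,\infty}$ is available. If one preferred to avoid the $L^{p}$-elliptic-regularity citation, an equivalent route would be the Neumann heat semigroup representation $w(\cdot,t) = \int_{0}^{\infty} e^{-s} e^{s\Delta} u(\cdot,t)\, ds$ together with the gradient estimate in Lemma \ref{semigroup}, applied with $q = l > N$, which again yields $\|\nabla w(\cdot,t)\|_{L^{\infty}(\Omega)} \leq C\, \|u(\cdot,t)\|_{L^{l}(\Omega)}$; combining with the obvious $L^{\infty}$-bound for $w$ obtained in the same way then completes the proof.
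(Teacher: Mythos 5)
Your argument is correct and coincides with the paper's proof: both rewrite the third equation as $-\Delta w + w = u$, invoke Lemma \ref{Le3.03} to bound $u$ in $L^{l}(\Omega)$ for $l>N$, apply $L^{p}$-elliptic regularity to get $w$ bounded in $W^{2,l}(\Omega)$, and conclude via the Sobolev embedding $W^{2,l}(\Omega)\hookrightarrow W^{1,\infty}(\Omega)$. The alternative heat-semigroup route you sketch at the end is a valid variant but is not needed.
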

\begin{proof}
Combining the $L^{p}$ theory of elliptic equation with Lemma \ref{Le3.03}, for all $t\in(0,T_{max})$, then there exists constant $C_{1}>0$ such that
\begin{equation}\label{Le3.46}
\sup\limits_{t\in(0,T)} \parallel w \parallel_{W^{2,p}(\Omega)} \leq C_{1} \quad\mbox{for any}~~p>N.
\end{equation}
Through the Sobolev embedding theorem, there exists a positive constant $C_{2}$ such that
\begin{equation}\label{Le3.47}
\sup\limits_{t\in(0,T)} \parallel w \parallel_{W^{1,\infty}(\Omega)} \leq C_{2} \quad\mbox{for any}~~t\in(0,T_{max}).
\end{equation}
Then the proof of this lemma is finished.
\end{proof}

In the following, we shall prove the boundedness of $\nabla v$ in $L^{p}$ space to establish the existence of global classical solution.
\begin{lemma}\label{Le3.00000000006}
Under the assumptions in lemma \ref{Le3}, then there exists constant $C>0$ such that
\begin{equation}
\parallel  v(\cdot,t) \parallel_{L^{p}(\Omega)} \leq C \quad\mbox{for all}~~t\in(0,T_{max}).
\end{equation}
\end{lemma}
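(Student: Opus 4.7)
The plan is to test the equation for $v$ against $v^{p-1}$ and close a single differential inequality using the uniform bounds already at hand, namely $\parallel w\parallel_{L^{\infty}(\Omega)}\leq C$ from Lemma \ref{Le3.4} and $\parallel u\parallel_{L^{q}(\Omega)}\leq C$ for arbitrarily large $q$ from Lemma \ref{Le3.03}. Multiplying the second equation of \eqref{origin} by $v^{p-1}$ and integrating by parts yields
\begin{align*}
\frac{1}{p}\frac{d}{dt}\int_{\Omega}v^{p}+\frac{4(p-1)}{p^{2}}\int_{\Omega}|\nabla v^{p/2}|^{2}+\int_{\Omega}v^{p}
=\int_{\Omega}v^{p-1}\nabla\cdot(v\nabla w)+\int_{\Omega}uv^{p-1}.
\end{align*}
The convection term is reshaped by a second integration by parts and the third equation of \eqref{origin}: $\int_{\Omega}v^{p-1}\nabla\cdot(v\nabla w)=\frac{p-1}{p}\int_{\Omega}v^{p}\Delta w=\frac{p-1}{p}\int_{\Omega}v^{p}w-\frac{p-1}{p}\int_{\Omega}v^{p}u$. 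The second piece has the favourable sign $\leq 0$ and is discarded, while the first is controlled by $C_{1}\int_{\Omega}v^{p}$ using Lemma \ref{Le3.4}.

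For the coupling term I use the elementary Young's inequality $uv^{p-1}\leq \frac{1}{p}u^{p}+\frac{p-1}{p}v^{p}$ together with Lemma \ref{Le3.03} to bound $\int_{\Omega}uv^{p-1}\leq C_{2}+\frac{p-1}{p}\int_{\Omega}v^{p}$. Collecting the contributions, the master inequality reduces to
\begin{align*}
\frac{1}{p}\frac{d}{dt}\int_{\Omega}v^{p}+\frac{4(p-1)}{p^{2}}\int_{\Omega}|\nabla v^{p/2}|^{2}\leq C_{3}\int_{\Omega}v^{p}+C_{4}.
\end{align*}

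To turn this into a closed ODE inequality for $y(t):=\int_{\Omega}v^{p}$, I invoke Gagliardo--Nirenberg (Lemma \ref{GN}) on $v^{p/2}$. Because $\parallel v\parallel_{L^{1}(\Omega)}$ is controlled by Lemma \ref{Le3.1}, the quantity $\parallel v^{p/2}\parallel_{L^{2/p}(\Omega)}$ is bounded, and the interpolation gives $\int_{\Omega}v^{p}=\parallel v^{p/2}\parallel_{L^{2}(\Omega)}^{2}\leq C_{5}\parallel\nabla v^{p/2}\parallel_{L^{2}(\Omega)}^{2a}+C_{5}$ with $a=\frac{N(p-1)}{Np+2-N}\in(0,1)$ for every $p>1$. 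Applying this identity twice, first to absorb the linear $C_{3}\int_{\Omega}v^{p}$ into the dissipation via Young's inequality, and then to bound the dissipation from below by a superlinear power of $y(t)$, I obtain an inequality of the form $y'(t)+c_{1}y(t)^{1/a}\leq c_{2}$ with $1/a>1$, and Lemma \ref{Le2} immediately delivers the desired time-uniform bound.

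The main obstacle is purely one of bookkeeping rather than mathematical depth: all pointwise and integral tools needed are already established in Lemmas \ref{Le3.1}, \ref{Le3.03}, and \ref{Le3.4}, and the only genuinely delicate point is verifying that the Gagliardo--Nirenberg exponent satisfies $a<1$ so that the dissipation in the ODI is strictly superlinear; this verification is automatic from $p>1$ via the arithmetic inequality $N(p-1)<Np+2-N$.
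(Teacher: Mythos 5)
Your argument is correct and gives a valid alternative route to the same uniform $L^{p}$ bound. The paper's own proof runs as follows: after the same integration by parts, it first applies Young's inequality to split $\tfrac{p-1}{p}\int_{\Omega}v^{p}w\leq\varepsilon\int_{\Omega}v^{p+2/N}+C\int_{\Omega}w^{(Np+2)/2}$, bounds the $w$-integral via Lemma~\ref{Le3.4}, and crucially uses the cancellation $-\tfrac{p-1}{p}\int_{\Omega}v^{p}u+\int_{\Omega}v^{p-1}u\leq\tfrac{1}{p}\int_{\Omega}u$ rather than the favorable sign alone, so it never needs the $L^{p}$-bound on $u$ in this step; it then absorbs $\varepsilon\int_{\Omega}v^{p+2/N}$ into the dissipation by reusing the estimates \eqref{Lp3}, \eqref{Lp4}, \eqref{Lp42} already established inside the proof of Lemma~\ref{Le3.2}. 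You instead bound $\tfrac{p-1}{p}\int_{\Omega}v^{p}w$ directly by $C\int_{\Omega}v^{p}$ via $\|w\|_{L^{\infty}}$, discard the helpful sign of $-\tfrac{p-1}{p}\int_{\Omega}v^{p}u$, invoke Lemma~\ref{Le3.03} to control $\int_{\Omega}u^{p}$, and then close the ODI with a fresh Gagliardo--Nirenberg interpolation on $v^{p/2}$ against $\|v\|_{L^{1}}$ producing the superlinear absorption $(\int_{\Omega}v^{p})^{1/a}$. Both approaches are sound; the paper's is marginally more economical (it avoids Lemma~\ref{Le3.03} here and recycles the $v^{p+2/N}$ machinery), while yours is more self-contained and avoids carrying the exponent $p+2/N$ through. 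The one small point you gloss over is that the GN inequality yields $\int_{\Omega}|\nabla v^{p/2}|^{2}\geq c\bigl((\int_{\Omega}v^{p})-C\bigr)_{+}^{1/a}$ rather than $c(\int_{\Omega}v^{p})^{1/a}$ outright, so you need the standard elementary manipulation $(y-C)_{+}^{1/a}\geq c_{1}y^{1/a}-c_{2}$ (or a direct case split on $y\leq 2C$) before Lemma~\ref{Le2} applies as stated; this is routine but should be spelled out.
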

\begin{proof}
Multiplying the second equation of \eqref{origin} by $v^{p-1}$, using the integrations by
parts as well as the third equation of system \eqref{origin} and Young's inequality, for any $\varepsilon>0$, we have
\begin{align}\label{Le3.00000000007}
&\frac{1}{p}\frac{d}{dt}\int_{\Omega}v^{p}+\int_{\Omega}v^{p}+\left(p-1\right)\int_{\Omega}v^{p-2}|\nabla v|^{2}\nonumber\\
\leq&\frac{p-1}{p}\int_{\Omega}v^{p}w
-\frac{p-1}{p}\int_{\Omega}v^{p}u+\int_{\Omega}v^{p-1}u \nonumber\\
\leq&\varepsilon\int_{\Omega}v^{p+\frac{2}{N}}+C_1\int_{\Omega}w^{\frac{p+\frac{2}{N}}{\frac{2}{N}}}
+C_2 \quad\mbox{for all}~~t\in(0,T_{max}).
\end{align}
In view of \eqref{Lp3}, \eqref{Lp4} and \eqref{Lp42}, there are constants $C_3>0$ and $C_4>0$ such that
\begin{align}\label{Le3.00000000008}
& (p-1)\int_{\Omega}v^{p-2}|\nabla v|^{2}=(p-1)\frac{4}{p^{2}}\parallel |\nabla v|^{\frac{p}{2}}\parallel^{2}_{L^{2}(\Omega)} \quad\mbox{for all}~~t\in(0,T_{max})
\end{align}
and
\begin{align}\label{Le3.00000000000008}
\int_{\Omega}v^{p+\frac{2}{N}}\leq C_{3}\parallel |\nabla v|^{\frac{p}{2}}\parallel^{2}_{L^{2}(\Omega)}+C_{4}\quad\mbox{for all}~~t\in(0,T_{max})
\end{align}
as well as
\begin{align}\label{Le3.00000000009}
\frac{p^{2}}{4}\int_{\Omega}v^{p-2}|\nabla v|^{2}\geq \frac{1}{C_{3}}\int_{\Omega}v^{p+\frac{2}{N}}-\frac{C_4}{C_3}\quad\mbox{for all}~~t\in(0,T_{max}).
\end{align}
Combining \eqref{Le3.00000000007}-\eqref{Le3.00000000009}, Lemma \ref{Le3.4} and the Sobolev embedding theorem, there are positive constants $C_5$ and $C_6$ such that
\begin{align}\label{Le3.000000000010}
&\frac{1}{p}\frac{d}{dt}\int_{\Omega}v^{p}+\int_{\Omega}v^{p}+\frac{\left(p-1\right)}{C_3}\frac{4}{p^{2}}
\int_{\Omega}v^{p+\frac{2}{N}} \nonumber\\
\leq&\varepsilon\int_{\Omega}v^{p+\frac{2}{N}}+C_1\int_{\Omega}w^{\frac{p+\frac{2}{N}}{\frac{2}{N}}}
+C_2\nonumber\\
\leq&\varepsilon\int_{\Omega}v^{p+\frac{2}{N}}+C_5\parallel \nabla w\parallel_{L^{\infty}}
+C_2 \nonumber\\
\leq&\varepsilon\int_{\Omega}v^{p+\frac{2}{N}}+C_6\quad\mbox{for all}~~t\in(0,T_{max}).
\end{align}
Then choosing $\varepsilon <\frac{(p-1)}{C_3}\frac{4}{p^{2}}$, we can derive that
\begin{align}\label{Le3.000000000011}
\frac{1}{p}\frac{d}{dt}\int_{\Omega}v^{p}+\int_{\Omega}v^{p}
\leq C_6 \quad\mbox{for all}~~t\in(0,T_{max}),
\end{align}
which implies that
\begin{equation*}
\int_{\Omega}v^{p}\leq C\quad\mbox{for all}~~t\in(0,T_{max}).
\end{equation*}
Thus, we complete the proof of this lemma.
\end{proof}

Now, the uniform estimate of $u$ has been established, we can use standard semigroup arguments to obtain the uniform bound of $\|\nabla v\|_{L^\infty(\Omega)}$.
\begin{lemma}\label{Le3.6}
Under the assumptions in lemma \ref{Le3}, then there exists a positive constant $C>0$ such that
\begin{equation}
\parallel \nabla v(\cdot,t) \parallel_{L^{\infty}(\Omega)} \leq C \quad\mbox{for all}~~t\in(0,T_{max}).
\end{equation}
\end{lemma}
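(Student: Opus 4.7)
The plan is to apply the variation-of-constants formula to $v$ and bound $\nabla v$ in $L^\infty$ using the Neumann heat-semigroup estimates from Lemma \ref{semigroup}, combining the uniform $L^p$-bounds for $u$ and $v$ available from Lemmas \ref{Le3.03} and \ref{Le3.00000000006} with the $W^{1,\infty}$-bound for $w$ from Lemma \ref{Le3.4}.

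Concretely, I would rewrite the second equation of \eqref{origin} as $v_t = (\Delta - 1)v + \nabla\cdot(v\nabla w) + u$ and use the third equation to expand
\begin{equation*}
\nabla\cdot(v\nabla w) = \nabla v\cdot\nabla w + v\Delta w = \nabla v\cdot\nabla w + v(w-u),
\end{equation*}
so that the mild formulation reads
\begin{equation*}
v(\cdot,t) = e^{t(\Delta-1)}v_0 + \int_0^t e^{(t-s)(\Delta-1)}\bigl[\nabla v\cdot\nabla w + v(w-u) + u\bigr](\cdot,s)\,ds.
\end{equation*}
Applying $\nabla$ to both sides and invoking the semigroup estimate in the form
\begin{equation*}
\|\nabla e^{(t-s)(\Delta-1)}F\|_{L^\infty(\Omega)} \leq C\bigl(1 + (t-s)^{-\tfrac{1}{2} - \tfrac{N}{2q}}\bigr) e^{-(1+\lambda_1)(t-s)} \|F\|_{L^q(\Omega)}
\end{equation*}
for any $q>N$ produces an integrable kernel in $s\in(0,t)$. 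The initial datum contributes at most $C\|\nabla v_0\|_{L^\infty}$ in view of \eqref{origin2}, while the bracket $v(w-u)+u$ is bounded in $L^q$ for arbitrary $q$ by H\"older's inequality together with Lemmas \ref{Le3.03}, \ref{Le3.4} and \ref{Le3.00000000006}.

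The main obstacle is the term $\nabla v\cdot\nabla w$, which reintroduces $\nabla v$ on the right-hand side; even though $\|\nabla w\|_{L^\infty}\leq C$, one still needs an a priori $L^q$-bound on $\nabla v$ itself before closing the estimate in $L^\infty$. I would obtain such a bound by a finite bootstrap: the same mild representation, evaluated in an intermediate $L^r$-norm, couples $\|\nabla v\|_{L^r}$ to $\|\nabla v\|_{L^{r'}}$ for some $r'<r$, and the singularity exponent $\tfrac{1}{2}+\tfrac{N}{2}(\tfrac{1}{r'}-\tfrac{1}{r})$ remains below $1$ whenever $\tfrac{1}{r'}-\tfrac{1}{r}<\tfrac{1}{N}$, so the integrability of $\nabla v$ improves by a fixed amount at each iteration and reaches any prescribed exponent in finitely many steps. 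Equivalently, the parabolic $L^p$-regularity of Lemma \ref{Le7} applied to $v_t-\Delta v+v=\nabla v\cdot\nabla w+v(w-u)+u$ yields a uniform bound on $\|\Delta v\|_{L^p}$ for $p>N$, hence on $\|v\|_{W^{2,p}}$, and Morrey's embedding then forces $\nabla v\in L^\infty$.

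With a uniform bound on $\|\nabla v(\cdot,t)\|_{L^q}$ for some fixed $q>N$ in hand, the gradient mild representation closes and delivers $\|\nabla v(\cdot,t)\|_{L^\infty(\Omega)}\leq C$ uniformly on $(0,T_{max})$, completing the proof.
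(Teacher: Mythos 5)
Your mild-solution setup is identical to the paper's: both of you write
\begin{equation*}
v(\cdot,t)=e^{t(\Delta-1)}v_0+\int_0^t e^{(t-s)(\Delta-1)}\bigl[\nabla v\cdot\nabla w+v\Delta w+u\bigr](\cdot,s)\,ds,
\end{equation*}
apply $\nabla$, invoke the Neumann semigroup estimates, and exploit the previously obtained bounds on $u$, $v$ and $w$. You also correctly identify the obstruction: the term $\nabla v\cdot\nabla w$ re-introduces $\nabla v$ on the right and must somehow be absorbed.

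The gap is in how you propose to close the loop. Your finite bootstrap ``couples $\|\nabla v\|_{L^r}$ to $\|\nabla v\|_{L^{r'}}$ for some $r'<r$'' is sound \emph{given} a uniform base bound $\|\nabla v(\cdot,t)\|_{L^{r_0}}\leq C$ for some finite $r_0$, but no such bound is available at this stage of the paper. The lemmas preceding Lemma~\ref{Le3.6} control $\|u\|_{L^l}$ (Lemma~\ref{Le3.03}), $\|w\|_{W^{1,\infty}}$ (Lemma~\ref{Le3.4}), and $\|v\|_{L^p}$ for every finite $p$ (Lemma~\ref{Le3.00000000006}), but none of them gives a uniform-in-time bound on any $\|\nabla v\|_{L^{r_0}}$. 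Applying the mild representation in $L^{r_0}$ itself only returns a linear Volterra inequality of the type $\|\nabla v\|_{L^{r_0}}(t)\leq C_1+C_2\int_0^t K(t-s)\|\nabla v\|_{L^{r_0}}(s)\,ds$, which does not produce a bound uniform on $(0,T_{\max})$ unless the kernel constant is small, and we have no control on $C_2\sim\|\nabla w\|_{L^\infty}$. Your ``equivalent'' parabolic-regularity alternative has the same circularity: Lemma~\ref{Le7} requires $g=\nabla v\cdot\nabla w+v(w-u)+u\in L^\gamma_tL^\gamma_x$, but the first summand already contains the very quantity you are trying to estimate.

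The paper closes the loop by a different, self-consistent device. It bounds $\|\nabla v\|_{L^{2N}}$ via Gagliardo--Nirenberg between $\|\nabla v\|_{L^\infty}$ and $\|v\|_{L^1}$,
\begin{equation*}
\|\nabla v\|_{L^{2N}}\leq C\bigl(\|\nabla v\|_{L^\infty}^{\zeta}\|v\|_{L^1}^{1-\zeta}+\|v\|_{L^1}\bigr),\qquad \zeta=\tfrac{1+\frac{1}{2N}}{1+\frac{1}{N}}\in(0,1),
\end{equation*}
inserts this into the semigroup estimate, and obtains for $y:=\sup_{t}\|\nabla v(\cdot,t)\|_{L^\infty}$ the algebraic inequality $y\leq C_7y^{\zeta}+C_8$, which closes because $\zeta<1$. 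No base case for $\nabla v$ and no maximal-regularity input are needed; the only ``lower-order'' norm used is $\|v\|_{L^1}$, which is already bounded. If you replace your bootstrap by this $L^\infty$--$L^1$ interpolation with subcritical exponent, the rest of your argument goes through.
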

\begin{proof}
Observing the second equation in system \eqref{origin}, using an associate variation-of-constants formula can draw that
\begin{equation}
v(\cdot,t)=e^{t(\Delta-1)}v_0+\int^{t}_{0}e^{(t-s)(\Delta-1)}[\nabla\cdot(v(\cdot,s)\nabla w(\cdot,s))+u(\cdot,s)]
\end{equation}
for all $t\in(0,T_{max})$. Combining the well-known smoothing property of the Neumann semigroup, Lemma \ref{Le3.03},  Lemma \ref{Le3.4}, Lemma \ref{Le3.00000000006},  and H\"{o}lder inequality, then there exist positive constants $\lambda$ and $C_i$ $(i=0,1,2,3,4)$ such that
\begin{align}\label{Le9}
&\parallel \nabla v \parallel_{L^{\infty}(\Omega)}\nonumber\\
\leq&C_0\parallel \nabla e^{t(\Delta-1)}v_{0} \parallel_{L^{\infty}(\Omega)}\nonumber\\
&+\int_{0}^{t}\parallel\nabla e^{(t-s)(\Delta-1)}\left[\nabla\cdot( v\nabla w)+u   \right]\parallel_{L^{\infty}(\Omega)}\nonumber\\
\leq& C_1\parallel v_{0} \parallel_{W^{1,\infty}(\Omega)}+C_1\int_{0}^{t}\left[1+(t-s)^{-\frac{1}{2}-\frac{N}{2}(\frac{1}{2N}-\frac{1}{\infty})}       \right]e^{-\lambda(t-s)}[\parallel \nabla v\cdot\nabla w+vw\nonumber\\
&-uv+u \parallel_{L^{2N}(\Omega)}+\parallel u \parallel_{L^{2N}(\Omega)}]\nonumber\\
\leq&C_1\parallel v_{0} \parallel_{W^{1,\infty}(\Omega)}+C_1\int_{0}^{t}\left[1+(t-s)^{-\frac{1}{2}-\frac{N}{2}(\frac{1}{2N}-\frac{1}{\infty})}       \right]e^{-\lambda(t-s)}[\parallel\nabla v\parallel_{L^{2N}(\Omega)}\parallel \nabla w \parallel_{L^{\infty}(\Omega)}\nonumber\\
&+\parallel v\parallel_{L^{2N}(\Omega)}\parallel w \parallel_{L^{\infty}(\Omega)}+2\parallel u\parallel_{L^{2N}(\Omega)}]\nonumber\\
\leq& C_2+C_3\int_{0}^{t}\left[1+(t-s)^{-\frac{1}{2}-\frac{N}{2}(\frac{1}{2N}-\frac{1}{\infty})}       \right]e^{-\lambda(t-s)}\parallel\nabla v\parallel_{L^{2N}(\Omega)}+C_4
\end{align}
for all $t\in(0,T_{max})$. Applying the Gagliardo-Nirenberg inequality and Lemma \ref{Le3.1}, we can draw that there exist constants $C_i>0$ $ (i=5,6)$ fulfilling
\begin{align}\label{Le09}
\parallel\nabla v\parallel_{L^{2N}(\Omega)}
&\leq C_5(\parallel\nabla v\parallel^{\zeta}_{L^{\infty}(\Omega)}\parallel v\parallel^{1-\zeta}_{L^{1}(\Omega)}+\parallel v\parallel_{L^{1}(\Omega)})\nonumber\\
&\leq C_6(\parallel\nabla v\parallel^{\zeta}_{L^{\infty}(\Omega)}+1) \quad\mbox{for all}~~t\in(0,T_{max}),
\end{align}
where $\zeta=\frac{1+\frac{1}{2N}}{1+\frac{1}{N}}\in(0,1)$.
Inserting \eqref{Le09} into \eqref{Le9}, we have
\begin{align}\label{Le009}
\parallel \nabla v \parallel_{L^{\infty}(\Omega)}\leq C_7\parallel\nabla v\parallel^{\zeta}_{L^{\infty}(\Omega)}+C_{8}\quad\mbox{for all}~~t\in(0,T_{max})
\end{align}
for some $C_7>0$ and $C_{8}>0$.

 Writing: $$y(t)=\sup\limits_{0\leq t\leq T_{max}}\parallel \nabla v \parallel_{L^{\infty}(\Omega)},$$ then we have
\begin{align}\label{Le0009}
y(t)\leq C_7y^{\zeta}(t)+C_{8} \quad\mbox{for all}~~t\in(0,T_{max}).
\end{align}
Then using a basic calculation, the conclusion of this lemma is valid.
\end{proof}

Similarly, we can obtain the uniform estimate of $\|u\|_{L^\infty(\Omega)}$ for the case of $N\geq4$.
\begin{lemma}\label{Le3.7}
Let $\alpha>\frac{N}{2}$. If $N\geq4$, then there exists a positive constant $C>0$ such that
\begin{equation}
\parallel u(\cdot,t) \parallel_{L^{\infty}(\Omega)} \leq C \quad\mbox{for all}~~t\in(0,T_{max}).
\end{equation}
\end{lemma}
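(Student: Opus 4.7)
The plan is a standard variation-of-constants/semigroup bootstrap, using the preparatory bounds already accumulated: Lemma \ref{Le3.03} supplies $\|u(\cdot,t)\|_{L^l(\Omega)} \leq C$ for every $l>1$, Lemma \ref{Le3.4} gives $\|w(\cdot,t)\|_{W^{1,\infty}(\Omega)} \leq C$, and Lemma \ref{Le3.6} gives $\|\nabla v(\cdot,t)\|_{L^\infty(\Omega)} \leq C$.

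First I would represent the solution of the first equation in \eqref{origin} through the Neumann heat semigroup as
\begin{equation*}
u(\cdot,t) = e^{t\Delta}u_0 + \int_0^t e^{(t-s)\Delta}\bigl[-\nabla\!\cdot\!(u\nabla v) + \xi\nabla\!\cdot\!(u\nabla w) + au - \mu u^\alpha\bigr]\,ds,
\end{equation*}
and then take $L^\infty(\Omega)$-norms and split the contribution into the chemotactic-convective divergence terms and the reactive terms. For the divergence part, Lemma \ref{semigroup} with an arbitrary $p\in(N,\infty)$ yields
\begin{equation*}
\bigl\|\nabla e^{(t-s)\Delta}\bigl[u(-\nabla v+\xi\nabla w)\bigr]\bigr\|_{L^\infty(\Omega)} \leq C\bigl(1+(t-s)^{-\frac12-\frac{N}{2p}}\bigr)e^{-\lambda_1(t-s)}\bigl\|u(-\nabla v+\xi\nabla w)\bigr\|_{L^p(\Omega)}.
\end{equation*}
Since $\nabla v, \nabla w$ are uniformly bounded in $L^\infty$ and $u$ is uniformly bounded in $L^p$ by Lemma \ref{Le3.03}, the factor $\|u(-\nabla v+\xi\nabla w)\|_{L^p(\Omega)}$ is bounded by a constant independent of $t$, and the choice $p>N$ makes the singularity $(t-s)^{-1/2-N/(2p)}$ integrable so that the time integral is finite and uniformly bounded for all $t\in(0,T_{max})$.

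For the reactive contribution, I would exploit the sign of the logistic term. Because $\alpha>1$ and $u\geq 0$, the elementary inequality $a s-\mu s^\alpha\leq C_0$ for all $s\geq 0$ (with $C_0$ depending only on $a,\mu,\alpha$) controls the positive part uniformly in $L^\infty$, while the negative part $\mu u^\alpha$ can be estimated in $L^p$ via Lemma \ref{Le3.03} applied with exponent $\alpha p$. Combining this with the contraction property $\|e^{(t-s)\Delta}f\|_{L^\infty(\Omega)} \leq C(1+(t-s)^{-N/(2p)})e^{-\lambda_1(t-s)}\|f\|_{L^p(\Omega)}$ from standard heat semigroup theory and collecting all contributions yields
\begin{equation*}
\|u(\cdot,t)\|_{L^\infty(\Omega)} \leq \|u_0\|_{L^\infty(\Omega)} + C\int_0^t\bigl(1+(t-s)^{-\frac12-\frac{N}{2p}}\bigr)e^{-\lambda_1(t-s)}\,ds \leq C',
\end{equation*}
with $C'$ independent of $t\in(0,T_{max})$.

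The main technical point is not any single estimate but making sure that the exponent $p$ chosen in the semigroup inequality simultaneously beats the time singularity ($p>N$) and is accessible from the $L^p$ control of $u$ (which requires $\alpha p$ to appear on the right-hand side and Lemma \ref{Le3.03} to apply there); this is where the assumption $\alpha>N/2$ is comfortably consistent with, though strictly weaker than, the full hypothesis on $\alpha$ in Theorem \ref{theorem}. Once the uniform $L^\infty$ bound on $u$ is in hand, combined with the already-established bounds on $v$ in $L^\infty$ (via Lemma \ref{Le3.6} and Sobolev embedding) and on $w$ in $W^{1,\infty}$ (Lemma \ref{Le3.4}), the extensibility criterion in Lemma \ref{Le3} forces $T_{max}=\infty$, completing the case $N\geq 4$.
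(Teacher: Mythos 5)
Your overall strategy (Duhamel representation, smoothing of the Neumann heat semigroup for the divergence term, exploiting the pointwise upper bound $as-\mu s^\alpha\leq C_0$ for the reactive term, fed by Lemmas \ref{Le3.03}, \ref{Le3.4}, \ref{Le3.6}) is the same one the paper uses, and the treatment of the divergence term $e^{(t-s)\Delta}\nabla\cdot(uh)$ with a fixed $p>N$ is sound: $\nabla\cdot(uh)$ has zero spatial mean (Neumann conditions), so the $e^{-\lambda_1(t-s)}$ decay of Lemma \ref{semigroup} applies and the time integral over $(0,t)$ is uniformly bounded.

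The gap is in the reaction term. You start the variation-of-constants formula at $s=0$ and then invoke a ``contraction property'' of the form
$\|e^{(t-s)\Delta}f\|_{L^\infty}\leq C\bigl(1+(t-s)^{-N/(2p)}\bigr)e^{-\lambda_1(t-s)}\|f\|_{L^p}$.
This inequality is false on a bounded domain with Neumann data: $e^{\tau\Delta}$ fixes constants, so there is no exponential decay unless $f$ has zero mean, and $f(u)=au-\mu u^\alpha$ has no reason to have zero mean. The only decay-free smoothing you actually have is $\|e^{\tau\Delta}f\|_{L^\infty}\leq C(1+\tau^{-N/(2p)})\|f\|_{L^p}$ (or, using the pointwise bound $f(u)\leq C_0$ and positivity of the semigroup, $e^{\tau\Delta}f(u)\leq C_0$ pointwise). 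Either way, integrating from $s=0$ to $s=t$ gives a contribution that grows linearly in $t$, not a uniform constant. Your final displayed inequality therefore does not follow as written.

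The paper avoids this by restarting Duhamel at $t_0=(t-1)_+$ rather than at $0$: since $t-t_0\leq 1$, it suffices to know $\|e^{(t-t_0)\Delta}u(\cdot,t_0)\|_{L^\infty}$ is bounded (maximum principle for $t\leq 1$, $L^1$--$L^\infty$ smoothing with Lemma \ref{Le3.1} for $t>1$) and that $f(u)\leq \sup_{s\geq 0}(as-\mu s^\alpha)_+$, so the reactive integral over a window of length at most $1$ is bounded with no decay needed. The same fix (or, equivalently, replacing $e^{\tau\Delta}$ by $e^{\tau(\Delta-1)}$ after rewriting $u_t-\Delta u+u=\dots$) repairs your argument; everything else in your proposal is consistent with the paper's proof. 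Your aside about estimating $\mu u^\alpha$ in $L^p$ via Lemma \ref{Le3.03} ``with exponent $\alpha p$'' is unnecessary once the window is localized, since $u\geq 0$ means the negative part of $f$ only decreases $u$ and one only needs the upper bound $f(u)\leq C_0$.
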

\begin{proof}
Recalling the first equation in \eqref{origin}, we have
$$u_t=\Delta u-\nabla\cdot(u\nabla v)+\xi\nabla\cdot(u\nabla w)+au-\mu u^{\alpha} \quad\mbox{for all}~~t\in(0,T_{max}).$$
Using the variation-of-constants formula can draw that
\begin{align}\label{Le3.7602}
u(\cdot,t)=e^{(t-t_{0})\Delta}u(\cdot,t_{0})-\int^{t}_{t_{0}}e^{(t-s)\Delta}\nabla h(\cdot,s)+\int^{t}_{t_{0}}e^{(t-s)\Delta}
f(u(\cdot,s))
\end{align}
for all $t\in(t_0,T_{max})$, where  $t_{0}: =(t-1)_{+}$, $h:=-u\nabla v+\xi u\nabla w$ and $f(u):=au-\mu u^{\alpha}.$

On the other hand, employing Lemma \ref{Le3.03}, Lemma \ref{Le3.4} and Lemma \ref{Le3.6}, we can arrive at the conclusion that
\begin{align}\label{Le3.763}
&\parallel h \parallel_{L^{2N}(\Omega)} \nonumber\\
=& \parallel  -u\nabla v+\xi u\nabla w \parallel_{L^{2N}(\Omega)} \nonumber\\
\leq & \parallel  u\parallel_{L^{2N}(\Omega)}\parallel \nabla v \parallel_{L^{\infty}(\Omega)}+|\xi|\parallel u \parallel_{L^{2N}(\Omega)}\parallel \nabla w \parallel_{L^{\infty}(\Omega)}\nonumber\\
\leq & C_1 \quad\mbox{for all}~~t\in(0,T_{max})
\end{align}
with a positive constant $C_1$.
Hence
\begin{align}\label{Le3.7005}
\parallel u\parallel_{L^{\infty}(\Omega)}
\leq& \parallel e^{(t-t_{0})\Delta}u(\cdot,t_{0})\parallel_{L^{\infty}(\Omega)}\nonumber\\
&+\int^{t}_{t_{0}}\parallel e^{(t-s)\Delta}\nabla h(\cdot,s)\parallel_{L^{\infty}(\Omega)} \nonumber\\ &+\int^{t}_{t_{0}}\parallel e^{(t-s)\Delta}f(u(\cdot,s))\parallel_{L^{\infty}(\Omega)} \quad\mbox{for all}~~t\in(0,T_{max}).
\end{align}
If $t\in(0,1]$, based on the maximum principle, we can deduce that
\begin{equation}
\parallel e^{(t-t_{0})\Delta}u_{0}\parallel_{L^{\infty}(\Omega)}\leq\parallel u_0\parallel_{L^{\infty}(\Omega)}\quad\mbox{for all}~~t\in(t_0,T_{max}).
\end{equation}
If $t>1$, by using an application of the Neumann heat semigroup $(e^{t\Delta})_{t>0}$ on $\Omega$ (\cite{FKI 1991 DCDS,winkler 2010 JDE}) and Lemma \ref{Le3.1}, then there exist constants $C_2>0$ and $C_3>0$, we can obtain that
\begin{equation}
\parallel e^{(t-t_{0})\Delta}u_{0}\parallel_{L^{\infty}(\Omega)}\leq C_2(t-t_0)^{-\frac{N}{2}}\parallel u_0\parallel_{L^{1}(\Omega)}
\leq C_3\quad\mbox{for all}~~t\in(t_0,T_{max}).
\end{equation}
Subsequently, in view of \eqref{Le3.763}, using the smoothing properties of the Stokes semigroup can find constants $\lambda_1>0$ and $C_4>0$ as well as
$C_5>0$ such that
\begin{align}
&\int^{t}_{t_{0}}\parallel e^{(t-s)\Delta}\nabla h(\cdot,s)\parallel_{L^{\infty}(\Omega)}\nonumber\\
\leq& C_4\int^{t}_{t_{0}}\left[1+(t-s)^{-\frac{1}{2}-\frac{N}{2}\times\frac{1}{2N}}\right]e^{-\lambda_1(t-s)}
\parallel h(\cdot,s)\parallel_{L^{2N}(\Omega)} \nonumber\\
\leq& C_5 \quad\mbox{for all}~~t\in(t_0,T_{max}).
\end{align}
Similarly, we can estimate the third term on the right hand side of \eqref{Le3.7005} as follows:
\begin{align}\label{Le3.7007}
&\int^{t}_{t_{0}}\parallel e^{(t-s)\Delta}f(u(\cdot,s))\parallel_{L^{\infty}(\Omega)}\\
\leq&\int^{t}_{t_{0}}\sup_{s\geq0}\left(as-\mu s^{\alpha}\right)_{+}
\leq\int^{t}_{t_{0}}\frac{a^{\frac{\alpha}{\alpha-1}}_{+}}{\mu^{\frac{1}{\alpha-1}}}\alpha^{\frac{-1}{\alpha-1}}(1-\frac{1}{\alpha})
\leq\frac{a^{\frac{\alpha}{\alpha-1}}_{+}}{\mu^{\frac{1}{\alpha-1}}}\alpha^{\frac{-1}{\alpha-1}}(1-\frac{1}{\alpha})\nonumber
\quad\mbox{for all}~~t\in(t_0,T_{max})
\end{align}
by using $0\leq t-t_0\leq1$. Collecting \eqref{Le3.7005}-\eqref{Le3.7007}, there exists a constant $C_6>0$ fulfilling
\begin{equation}
\parallel u \parallel_{L^{\infty}(\Omega)} \leq C_6 \quad\mbox{for all}~~t\in(0,T_{max}).
\end{equation}
So the proof of this lemma is completed.
\end{proof}

\newcommand\HIIII{{\bf IIII}}
\section{A priori estimates for $N=3$ and $N=2$}
\setcounter{equation}{0}
In this section, we give some estimates about the local solution $(u,v,w)$ which was obtained in Lemma \ref{Le3} for the case of $N=3$ and $N=2$.

\begin{lemma}\label{Le3.3010}
If $N\leq3$, then we can find a positive constant $C$ such that
\begin{equation}
\parallel v(\cdot,t) \parallel_{L^\infty(\Omega)} \leq C \quad\mbox{for all}~~t\in(0,T_{max}).
\end{equation}
\end{lemma}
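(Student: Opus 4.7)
I would prove the $L^\infty$-bound on $v$ by a bootstrap ladder: establishing $L^p$-bounds on $v$ for increasing $p$, then upgrading to $L^\infty$ via heat-semigroup estimates. The starting point is the energy identity obtained by testing the second equation of \eqref{origin} against $v^{p-1}$ and substituting $\Delta w = w-u$ from the elliptic third equation, namely
\begin{align*}
\frac{1}{p}\frac{d}{dt}\int_\Omega v^p + \frac{4(p-1)}{p^2}\int_\Omega |\nabla v^{p/2}|^2 + \int_\Omega v^p = \frac{p-1}{p}\int_\Omega v^p w - \frac{p-1}{p}\int_\Omega v^p u + \int_\Omega uv^{p-1}.
\end{align*}
The $-\frac{p-1}{p}\int v^p u$ term is nonpositive and will be discarded. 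Young's inequality bounds the remaining problematic terms by $\varepsilon \int_\Omega v^{p+2/N} + C\int_\Omega w^q + C\int_\Omega u^r$ for appropriate exponents $q,r$.

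Next, I would invoke the $L^p$-theory of elliptic equations applied to $-\Delta w + w = u$ to convert all $w$-norms into $u$-norms, and combine with the Gagliardo--Nirenberg inequality (Lemma \ref{GN}) together with the $L^1$-bound $\int v \leq C$ from Lemma \ref{Le3.1} to absorb $\int v^{p+2/N}$ into the dissipation $\|\nabla v^{p/2}\|_{L^2}^2$ (cf.\ the chain \eqref{Lp4}--\eqref{Lp42}). The resulting ODI has the shape
\begin{align*}
\frac{d}{dt}\int_\Omega v^p + \int_\Omega v^p \leq C\int_\Omega u^\alpha + C,
\end{align*}
to which Lemma \ref{Le4} applies because Lemma \ref{Le3.0} provides the uniform time-integrated control $\int_t^{t+\tau}\int_\Omega u^\alpha \leq C$. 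This yields $\sup_{t<T_{\max}}\|v(\cdot,t)\|_{L^p(\Omega)} \leq C$ for a first admissible $p$, and the argument can be iterated to push $p$ arbitrarily large once the corresponding higher $L^p$-bounds for $u$ are obtained by the analogous test on the first equation of \eqref{origin} (where $\alpha \geq 2$, and for $N=3,\alpha=2$ the largeness of $\mu$, ensures that the transport and reaction terms can be absorbed).

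Once $v\in L^p$ for $p$ large and $u\in L^p$ with $p > N$, elliptic regularity and Sobolev embedding give $\nabla w\in L^\infty$; then the variation-of-constants representation
\begin{align*}
v(\cdot,t) = e^{t(\Delta-1)}v_0 + \int_0^t e^{(t-s)(\Delta-1)}\bigl[\nabla\cdot(v\nabla w) + u\bigr]\,ds,
\end{align*}
together with the smoothing estimates of the Neumann heat semigroup (Lemma \ref{semigroup}) and H\"older's inequality on $\|v\nabla w\|_{L^q}$ with $q>N$, produces the desired bound $\|v(\cdot,t)\|_{L^\infty(\Omega)} \leq C$.

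The main obstacle I anticipate is the critical case $N=3$, $\alpha=2$: here the time-integrated control on $\int u^\alpha$ is precisely the borderline amount of information needed to close the $L^p$-iteration, and it is the largeness of $\mu$ that provides the slack to absorb the constants produced by Young's inequality when estimating $\int v^p w$ through the chain $w \leftarrow u$. Bookkeeping of these exponents, and verifying that the Gagliardo--Nirenberg parameter $\theta$ satisfies the required strict inequality at each bootstrap step, is the delicate technical part of the argument; the case $N=2,\alpha=2$ is then easier because the Sobolev embeddings have more room.
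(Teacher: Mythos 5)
Your proposal misses the key structural ingredient of the paper's proof and, as written, is circular. The paper proves this lemma by a self-contained Moser-type iteration that requires \emph{only} the mass bound $\int_\Omega u\leq C$ from Lemma \ref{Le3.1} and the $L^2$-bound on $w$ from Lemma \ref{Le030.2} (available for $N\leq 3$ since $l_0$ can be taken $\geq 2$), and in particular imposes \emph{no} hypotheses on $\alpha$ or $\mu$ beyond $\alpha>1$. The crucial observation is a cancellation: in the identity obtained from testing the second equation against $v^{p-1}$, the convection produces $-\frac{p-1}{p}\int_\Omega v^p u$, while the source term $\int_\Omega uv^{p-1}$ is split (as in \eqref{201}) into $\frac{p-1}{p}\int_\Omega uv^p+\frac1p\int_\Omega u$; the two $\int uv^p$ contributions cancel exactly, leaving only the controllable constant $\frac1p\int u$. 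After this, the only remaining nonlinearity is $\frac{p-1}{p}\int_\Omega v^p w$, which is handled by Cauchy--Schwarz against $\|w\|_{L^2}$ and Gagliardo--Nirenberg, yielding a recursion $M_j\leq 1+\max\{\int v_0^{p_j}, C p_j^{1/(1-\theta)}M_{j-1}^2\}$ that closes by the standard Moser argument. You instead \emph{discard} $-\frac{p-1}{p}\int v^p u$ as merely nonpositive and then split $\int uv^{p-1}$ by Young into $\varepsilon\int v^{p+2/N}+C\int u^r$ with $r=\frac{Np+2}{N+2}$; since $r\to\infty$ as $p\to\infty$, this forces you to feed higher and higher $L^p$-bounds on $u$ into the ladder.

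That dependence is where the circularity lies. The $L^p$-bounds on $u$ beyond $L^\alpha_{t,x}$ that you invoke (testing the first equation with $u^{p-1}$, ``absorbing the transport terms'' using $\alpha\geq 2$ or large $\mu$) are, in the paper, Lemmas \ref{Le3.301121}, \ref{Le3.930112}, \ref{Le3.30112}, \ref{Le003.13}, \ref{Le003.1000300} and \ref{Le3.13} --- and every one of those proofs cites Lemma \ref{Le3.3010}, i.e.\ the uniform $\|v\|_{L^\infty}$ bound, to control quantities such as $\int|\nabla v|^6\leq(4+\sqrt N)^2\|v\|_{L^\infty}^2\int|\nabla v|^2|D^2v|^2$ and $\int v^2|\Delta w|^2\leq\|v\|_{L^\infty}^2\|\Delta w\|_{L^2}^2$. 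So you would be using the conclusion of the present lemma to establish the inputs of your own proof of it. The paper's ordering (Lemma 4.1 first, then the $u$-estimates) is not an accident; it is forced, and the cancellation trick is precisely what allows Lemma 4.1 to be proved without any higher regularity on $u$. If you want to salvage your semigroup endgame, you should first run the Moser iteration on $v$ as in the paper (keeping the cancelling convection term), obtain $\|v\|_{L^\infty}$, and only then proceed to the $u$-bounds that depend on it.
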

\begin{proof}
Let $T_{max}(u_0,v_0)$ and $(u,v,w)$  are the same as in Lemma \ref{Le3}, for integers $j\geq0$, denote $p_{j}:=2^{j}$ and
\begin{align}\label{203}
M_{j}(T):=1+\max_{t\in[0,T]}\int_{\Omega}v^{p_{j}}(\cdot,t),~~T\in(0,T_{max}(u_0,v_0)).
\end{align}
For convenience, let $p=p_j$ with $j\geq1$. Multiplying both sides of the second equation in \eqref{origin} by $v^{p-1}$, and integrating by parts, we have
\begin{align}
\int_{\Omega}v^{p-1}v_{t}=\int_{\Omega}v^{p-1}\Delta v+\int_{\Omega}v^{p-1}\nabla\cdot(v\nabla w)-\int_{\Omega}v^{p}+\int_{\Omega}v^{p-1}u
\end{align}
for all $t\in(0,T_{max})$. Using the identity $\Delta w=w-u$ to infer that
\begin{align}\label{200}
&\frac{1}{p}\frac{d}{dt}\int_{\Omega}v^{p}+(p-1)\int_{\Omega}v^{p-2}|\nabla v|^{2}\nonumber\\
=&\frac{p-1}{p}\int_{\Omega}v^{p}\Delta w+\int_{\Omega}v^{p-1}u-\int_{\Omega}v^{p}\nonumber\\
=&\frac{p-1}{p}\int_{\Omega}v^{p}(w-u)+\int_{\Omega}v^{p-1}u-\int_{\Omega}v^{p}\nonumber\\
=&\frac{p-1}{p}\int_{\Omega}v^{p}w-\frac{p-1}{p}\int_{\Omega}v^{p}u+\int_{\Omega}v^{p-1}u-\int_{\Omega}v^{p} \quad\mbox{for all}~~t\in(0,T_{max}),
\end{align}
where $p\geq2$, one can easily obtain that
\begin{align}\label{199}
(p-1)\int_{\Omega}v^{p-2}|\nabla v|^{2}\geq \frac{p}{2}\int_{\Omega}v^{p-2}|\nabla v|^{2}=\frac{2}{p}\int_{\Omega}|\nabla v^{\frac{p}{2}}|^{2}\quad\mbox{for all}~~t\in(0,T_{max}).
\end{align}
Using the Young's inequality and Lemma \ref{Le3.1}, we have
\begin{align}\label{201}
\int_{\Omega}uv^{p-1}\leq\frac{p-1}{p}\int_{\Omega}uv^{p}+\frac{1}{p}\int_{\Omega}u\leq\frac{p-1}{p}\int_{\Omega}uv^{p}+C_1
\quad\mbox{for all}~~t\in(0,T_{max})
\end{align}
with certain positive $C_1>0$. For the first term on the right hand side of \eqref{200}, by using the Cauchy-Schwarz inequality, the Gagliardo-Nirenberg inequality and Lemma \ref{Le030.2}, we can draw that
\begin{align}\label{202}
\frac{p-1}{p}\int_{\Omega}v^{p}w
\leq& \int_{\Omega}v^{p}w
\leq \left\{\int_{\Omega}v^{2p}\right\}^{\frac{1}{2}}\cdot\left\{ \int_{\Omega}w^{2} \right\}^{\frac{1}{2}}\nonumber\\
\leq&C_2\left\{ \int_{\Omega}v^{2p} \right\}^{\frac{1}{2}}
=C_2\parallel v^{\frac{p}{2}} \parallel^{2}_{L^{4}(\Omega)}\nonumber\\
\leq&C_3\parallel \nabla v^{\frac{p}{2}} \parallel^{2\theta}_{L^{2}(\Omega)}\parallel v^{\frac{p}{2}} \parallel^{2(1-\theta)}_{L^1(\Omega)}+C_3\parallel v^{\frac{p}{2}}\parallel^{2}_{L^1(\Omega)}\quad\mbox{for all}~~t\in(0,T_{max}),
\end{align}
where $\theta=\frac{3N}{2N+4}\in(0,1)$. As herein for any $T\in(0,T_{max})$, we have
\begin{align}\label{204}
\parallel v^{\frac{p}{2}} \parallel_{L^1(\Omega)}=\int_{\Omega}v^{p_{j-1}}\leq M_{j-1}(T) \quad\mbox{for all}~~t\in(0,T_{max}).
\end{align}
Hence, using the Young's inequality, it holds that
\begin{align}\label{205}
\frac{p-1}{p}\int_{\Omega}v^{p}w
\leq & C_3M^{2(1-\theta)}_{j-1}(T)\parallel \nabla v^{\frac{p}{2}} \parallel^{2\theta}_{L^{2}(\Omega)}+C_3M^{2}_{j-1}(T)\nonumber\\
=&\{ \frac{2}{p}\int_{\Omega}|\nabla v^{\frac{p}{2}}|^{2}\}^{\theta}\cdot\big\{C_3\cdot(\frac{p}{2})^{\theta}\cdot M^{2(1-\theta)}_{j-1}(T)\big\}+C_3M^{2}_{j-1}(T)\nonumber\\
\leq&\frac{2}{p}\int_{\Omega}|\nabla v^{\frac{p}{2}}|^{2}
+2^{-\frac{\theta}{1-\theta}}C_3^{\frac{1}{1-\theta}}p^{\frac{\theta}{1-\theta}}M^{2}_{j-1}(T)+C_3M^{2}_{j-1}(T)\nonumber\\
\leq&\frac{2}{p}\int_{\Omega}|\nabla v^{\frac{p}{2}}|^{2}+C_4p^{\frac{\theta}{1-\theta}}M^{2}_{j-1}(T)
\quad\mbox{for all}~~t\in(0,T_{max})
\end{align}
with $C_4=2^{-\frac{\theta}{1-\theta}}C_3^{\frac{1}{1-\theta}}+C_3$ and $p\geq2$. Then, combining \eqref{200}, \eqref{199}, \eqref{201} with \eqref{205} can derive that
\begin{align}\label{206}
\frac{1}{p}\frac{d}{dt}\int_{\Omega}v^{p}+\int_{\Omega}v^{p}
\leq&C_1+C_4p^{\frac{\theta}{1-\theta}}M^{2}_{j-1}(T)\nonumber\\
\leq&C_5p^{\frac{\theta}{1-\theta}}M^{2}_{j-1}(T)\quad\mbox{for all}~~t\in(0,T_{max}).
\end{align}
Therefore, applying an ODE comparison argument entails that
\begin{align}\label{207}
\int_{\Omega}v^{p}\leq \max\left\{ \int_{\Omega}v_0^{p}, C_5p^{\frac{1}{1-\theta}}M^{2}_{j-1}(T)\right\}\quad\mbox{for all}~~t\in(0,T_{max}).
\end{align}
For any $j\geq1$, we have
\begin{align}\label{208}
M_j(T)\leq1+\max\left\{\int_{\Omega}v_0^{p_j}, C_5p_j^{\frac{1}{1-\theta}}M^{2}_{j-1}(T)\right\}\quad\mbox{for all}~~t\in(0,T_{max}),
\end{align}
which we use the following estimate
\begin{align}\label{209}
\frac{d}{dt}\int_{\Omega}v+\int_{\Omega}v=\int_{\Omega}u \quad\mbox{for all}~~t\in(0,T_{max}(u_0,v_0)).
\end{align}
 By an ODE comparison again, $\int_{\Omega}v\leq C_6$, we easily get $M_0(T)\leq1+C_6$. Let
$$  \overline{M_j}=\sup\limits_{T\in(0,T_{max}(u_0,v_0))}M_j(T)  $$
represents a sequence $(\overline{M_j})_{j\geq0}$ of finite numbers $\overline{M_j}\geq1$ which satisfies
\begin{align}\label{210}
\overline{M_0}\leq1+C_6  ~~~\mbox{and}~~~~ \overline{M_j}\leq1+max\left\{\int_{\Omega}v_0^{p_j},C_5p_j^{\frac{1}{1-\theta}}M^{2}_{j-1}(T)\right\}\quad\mbox{for all}~~j\geq1.
\end{align}
Now the remainder is quite standard: if $\overline{M_j}\leq1+\int_{\Omega}v_0^{p_j}$ for infinitely many $j\geq0$, then from \eqref{203}, it directly follows that
\begin{align}\label{2110000}
\parallel v\parallel_{L^\infty(\Omega)}\leq\liminf_{j\rightarrow\infty}(\overline{M_j}-1)^{\frac{1}{p_j}}
\leq\liminf_{j\rightarrow\infty}\{\int_{\Omega}v_0^{p_j}\}^{\frac{1}{p_j}}=\parallel v_0\parallel_{L^\infty(\Omega)}
\end{align}
for all $t\in(0,T_{max}(u_0,v_0))$. Otherwise, \eqref{210} warrants the existence of $j_0 \in N$ such that
\begin{align}\label{212}
\overline{M_j}\leq C_5p_j^{\frac{1}{1-\theta}}\overline{M}^{2}_{j-1}(T)\quad\mbox{for all}~~j\geq j_0.
\end{align}
We can find a positive constant $C_7 > 1$, which independents of $\xi$ by construction of $(\overline{M_j})_{j\geq0}$ and satisfies
\begin{align*}
\overline{M_j}\leq C_7^{j}\overline{M}^{2}_{j-1}\quad\mbox{for all}~~j\geq 1.
\end{align*}
Hence, by a straightforward induction, we can infer that
\begin{align*}
\overline{M_j}\leq C_7^{2^{j+1}-j-2}\overline{M}^{2^j}_0\leq C_7^{2^{j+1}}\overline{M}^{2^j}_0\quad\mbox{for all}~~j\geq 1.
\end{align*}
According to the estimate of \eqref{210}, we have
\begin{equation*}
\parallel v(\cdot,t)\parallel_{L^\infty(\Omega)}\leq\liminf_{j\rightarrow\infty}(\overline{M_j})^{\frac{1}{2_j}}_j
\leq C_7^{2}\overline{M}_0\leq C_7^{2}(1+C_6) \quad\mbox{for all}~~t\in(0,T_{max}(u_0,v_0)).
\end{equation*}
Thus, we accomplished the proof of the Lemma.
\end{proof}

As a basic step of a priori estimates for the case $N=2$ and $N=3$, we establish the main inequality by applying standard testing procedures to the first equation in system \eqref{origin}.
\begin{lemma}\label{Le3.301121}
Under the conditions of Lemma \ref{Le3}. Let $N=3$, $\alpha>2$, for any $\mu>0$ or $\alpha=2$, $\mu>\max\{2|\xi|,2C_3A\}$, where $C_3$ and $A$ can be found in next lemma. Without any restrictions on the index $\xi$, one can find a constant $C>0$ such that
\begin{equation}
\frac{d}{dt}\int_{\Omega}u^{2}+\int_{\Omega}|\nabla u|^{2}+\frac{\mu}{2}\int_{\Omega}u^{\alpha+1}\leq C\int_{\Omega}|\nabla v|^{6}+C
 \quad\mbox{for all}~~t\in(0,T_{max}).
\end{equation}
\end{lemma}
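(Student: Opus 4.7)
The plan is to test the first equation of \eqref{origin} by $u$, integrate over $\Omega$, and convert every bad term on the right into either a small multiple of $\int_\Omega|\nabla u|^2$, a small multiple of $\int_\Omega u^{\alpha+1}$ (both of which can be absorbed into the good terms on the left), a multiple of $\int_\Omega |\nabla v|^6$, or a pure constant. After multiplication by $u$ and integration by parts, one obtains
\begin{align*}
\frac{1}{2}\frac{d}{dt}\int_{\Omega}u^{2}+\int_{\Omega}|\nabla u|^{2}
=\int_{\Omega}u\nabla u\cdot\nabla v-\xi\int_{\Omega}u\nabla u\cdot\nabla w+a\int_{\Omega}u^{2}-\mu\int_{\Omega}u^{\alpha+1}.
\end{align*}
The key idea is to use the elliptic identity $\Delta w=w-u$ from the third equation to replace the haptotactic cross-term by a sign-definite piece plus a lower-order piece involving $w$.

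For the chemotaxis contribution I would first apply Young's inequality in the form $\int_{\Omega}u\nabla u\cdot\nabla v\leq \frac{1}{4}\int_{\Omega}|\nabla u|^{2}+\int_{\Omega}u^{2}|\nabla v|^{2}$, absorbing the gradient piece into the left-hand side. The remaining term $\int_{\Omega}u^{2}|\nabla v|^{2}$ is then split by a second Young step with exponents $\tfrac{\alpha+1}{2}$ and $\tfrac{\alpha+1}{\alpha-1}$, yielding $\varepsilon\int_{\Omega}u^{\alpha+1}+C_{\varepsilon}\int_{\Omega}|\nabla v|^{2(\alpha+1)/(\alpha-1)}$; when $\alpha=2$ the exponent $2(\alpha+1)/(\alpha-1)$ is exactly $6$, which is precisely the target term on the right-hand side of the stated inequality.

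For the haptotactic term I would integrate by parts once more via $-\xi\int_{\Omega}u\nabla u\cdot\nabla w=-\tfrac{\xi}{2}\int_{\Omega}\nabla(u^{2})\cdot\nabla w=\tfrac{\xi}{2}\int_{\Omega}u^{2}\Delta w$ and then substitute $\Delta w=w-u$ to obtain $\tfrac{\xi}{2}\int_{\Omega}u^{2}w-\tfrac{\xi}{2}\int_{\Omega}u^{3}$. The good term $-\tfrac{\xi}{2}\int_{\Omega}u^{3}$ (when $\xi>0$) or its bound $\tfrac{|\xi|}{2}\int_{\Omega}u^{3}$ (when $\xi<0$) has to be absorbed by the logistic sink $-\mu\int_{\Omega}u^{\alpha+1}$: for $\alpha>2$ Young's inequality does this for any $\mu>0$, whereas for $\alpha=2$ the exponents match and the absorption requires $\mu>2|\xi|$, which explains the largeness condition imposed in the statement. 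The remaining piece $\tfrac{\xi}{2}\int_{\Omega}u^{2}w$ is handled by Young's inequality $\tfrac{|\xi|}{2}\int_{\Omega}u^{2}w\leq \varepsilon\int_{\Omega}u^{\alpha+1}+C_{\varepsilon}\int_{\Omega}w^{(\alpha+1)/(\alpha-1)}$ combined with the elliptic $L^{p}$ estimate $\|w\|_{L^{p}(\Omega)}\leq C\|u\|_{L^{p}(\Omega)}$ coming from the third equation and Lemma \ref{Le030.2}; at the critical value $\alpha=2$ this produces another $u^{3}$ contribution with coefficient proportional to $C_{3}A$, explaining the second condition $\mu>2C_{3}A$. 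The linear growth term $a\int_{\Omega}u^{2}$ is controlled by a further Young inequality against $u^{\alpha+1}$ at the cost of an additive constant.

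The main obstacle is the borderline case $\alpha=2$: every term we pick up from the chemotaxis, haptotaxis, and logistic contributions is of order $u^{3}$, so the constants in the successive Young inequalities must be tracked and the resulting coefficients balanced against $\mu$. In contrast, for $\alpha>2$ all such $u^{3}$-type terms are subcritical with respect to $u^{\alpha+1}$ and can be absorbed for any positive $\mu$ without any smallness of the remainders. Once all of these reductions are carried out, collecting the constants and choosing the Young parameters small enough so that the total coefficient of $\int_{\Omega}u^{\alpha+1}$ on the right does not exceed $\mu/2$, and multiplying the resulting identity by $2$, delivers exactly the inequality claimed in the lemma.
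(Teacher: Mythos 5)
Your overall approach follows the paper's: test the first equation by $u$, integrate by parts twice in the haptotactic cross-term, substitute the elliptic identity $\Delta w = w - u$, and close by a cascade of Young inequalities that absorb the gradient piece on the left and the various powers of $u$ into the logistic sink. Your handling of the chemotaxis term, of the pure cubic piece $\frac{|\xi|}{2}\int_\Omega u^3$ (which at $\alpha=2$ requires $\mu>2|\xi|$), and of $a\int_\Omega u^2$ all agree with the paper's Case~I/II. The gap is in your treatment of the mixed term $\frac{\xi}{2}\int_\Omega u^2 w$.

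If, as you propose, you split $\frac{|\xi|}{2}\int_\Omega u^2 w \le \varepsilon\int_\Omega u^{\alpha+1} + C_\varepsilon\int_\Omega w^{(\alpha+1)/(\alpha-1)}$ and then invoke the elliptic estimate $\|w\|_{L^p}\le C\|u\|_{L^p}$ directly, then at the borderline $\alpha=2$ you convert $\int w^3$ into $C_{\rm ell}^3\int u^3$ with a fixed elliptic constant. Both pieces from Young are then $O(\int u^3)$, and minimizing over $\varepsilon$ still leaves a $u^3$-coefficient bounded \emph{below} by a quantity depending on $C_{\rm ell}$. This is incompatible with the stated hypothesis $\mu>\max\{2|\xi|,2C_3A\}$, which says nothing about $C_{\rm ell}$; under that hypothesis you cannot guarantee absorption. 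The paper sidesteps this entirely: using Lemma~\ref{Le030.2} (a uniform bound for $\|w\|_{L^{l_0}}$, independent of $u$), Gagliardo--Nirenberg interpolation, and elliptic regularity, it obtains $\int_\Omega w^3 \le C\|u\|^{3\theta}_{L^3(\Omega)}$ with $\theta=\frac{N}{N+12}\in(0,1)$, a \emph{subcritical} power of $u$, so that the subsequent Young step yields $\frac{\mu}{16}\int_\Omega u^{\alpha+1}+C$ for any $\mu>0$, placing no additional largeness constraint on $\mu$ from this term. Without some version of this interpolation your argument does not prove the lemma under the stated threshold.

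Relatedly, your claim that the mixed term ``produces another $u^3$ contribution with coefficient proportional to $C_3A$, explaining the second condition $\mu>2C_3A$'' misreads the paper's architecture. The constants $C_3$ and $A$ are introduced in Lemma~\ref{Le3.930112} and Lemma~\ref{Le3.30112}; the requirement $\mu>2C_3A$ is used only in Lemma~\ref{Le3.30112}, where the differential inequalities of this lemma and Lemma~\ref{Le3.930112} are combined. It is listed in the hypothesis of the present lemma merely so the conditions can be carried forward; the proof here only consumes $\mu>2|\xi|$ at $\alpha=2$.
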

\begin{proof} We divide the proof of this lemma into two cases. \\
\textbf{Case \uppercase\expandafter{\romannumeral 1}: $\alpha>2$.} Multiplying the first equation in \eqref{origin} by $u$ and integrating by parts, then using that $\Delta w=w-u$ for any $\xi\in \mathbb{R}$, for all $t\in(0,T_{max})$, we can get
\begin{align}\label{40}
&\frac{1}{2}\frac{d}{dt}\int_{\Omega}u^{2}+\int_{\Omega}|\nabla u|^{2}\nonumber\\
=&\int_{\Omega}u\nabla u\cdot\nabla v-\xi\int_{\Omega}u\nabla u\cdot\nabla w+a\int_{\Omega}u^{2}-\mu\int_{\Omega}u^{\alpha+1}\nonumber\\
=&\int_{\Omega}u\nabla u\cdot\nabla v-\frac{\xi}{2}\int_{\Omega}\nabla u^{2}\cdot\nabla w+a\int_{\Omega}u^{2}-\mu\int_{\Omega}u^{\alpha+1}\nonumber\\
=&\int_{\Omega}u\nabla u\cdot\nabla v+\frac{\xi}{2}\int_{\Omega} u^{2}\Delta w+a\int_{\Omega}u^{2}-\mu\int_{\Omega}u^{\alpha+1}\nonumber\\
=&\int_{\Omega}u\nabla u\cdot\nabla v+\frac{\xi}{2}\int_{\Omega} u^{2}(w-u)+a\int_{\Omega}u^{2}-\mu\int_{\Omega}u^{\alpha+1}\nonumber\\
\leq&\int_{\Omega}u\nabla u\cdot\nabla v+\frac{|\xi|}{2}\int_{\Omega} u^{2}w+\frac{|\xi|}{2}\int_{\Omega} u^{3}+a\int_{\Omega}u^{2}-\mu\int_{\Omega}u^{\alpha+1}.
\end{align}
In coping with the first term on the right hand side of \eqref{40}, by the Young's inequality, we can find constants $C_i>0$ $(i=1,2,3)$ such that
\begin{align}\label{41}
\int_{\Omega}u\nabla u\cdot\nabla v\leq&\frac{1}{2}\int_{\Omega}|\nabla u|^{2}+\frac{1}{2}\int_{\Omega}u^{2}|\nabla v|^{2}\nonumber\\
\leq&\frac{1}{2}\int_{\Omega}|\nabla u|^{2}+C_1\int_{\Omega}u^{3}+C_2\int_{\Omega}|\nabla v|^{6}\nonumber\\
\leq&\frac{1}{2}\int_{\Omega}|\nabla u|^{2}+\frac{\mu}{2}\int_{\Omega}u^{\alpha+1}+C_2\int_{\Omega}|\nabla v|^{6}+C_3\quad\mbox{for all}~~t\in(0,T_{max}).
\end{align}
Using the Young's inequality again, there exist some constants $C_i>0$ $(i=4,5,6)$ such that
\begin{align}\label{42}
\frac{|\xi|}{2}\int_{\Omega} u^{2}w
\leq&C_4\int_{\Omega} u^{3}+C_5\int_{\Omega}w^{3}\nonumber\\
\leq&\frac{\mu}{16}\int_{\Omega} u^{\alpha+1}+C_5\int_{\Omega}w^{3}+C_6\quad\mbox{for all}~~t\in(0,T_{max}).
\end{align}
Recalling Lemma \ref{Le030.2}, there exists a positive $C_7>0$ such that
\begin{align}\label{43}
\parallel w\parallel_{L^{l_0}(\Omega)}\leq C_7\quad\mbox{for all}~~t\in(0,T_{max}).
\end{align}
Invoking the Gagliardo-Nirenberg inequality along with standard elliptic regularity theory (\cite{DN2001JMAA}) can pick constant $C_8>0$ such that
\begin{align}\label{44}
C_5\int_{\Omega}w^{3}\leq& C_5C_8C_7^{3(1-\theta)}\parallel-\Delta w+w\parallel^{3\theta}_{L^{3}(\Omega)}\nonumber\\
=& C_5C_8C_7^{3(1-\theta)}\parallel u\parallel^{3\theta}_{L^{3}(\Omega)}\quad\mbox{for all}~~t\in(0,T_{max})
\end{align}
with $\theta=\frac{N}{N+12}\in(0,1)$. Applying the Young's inequality, owing to $\alpha>2$ and $0<\theta<1$, there are constants $C_9>0$ and $C_{10}>0$ as well as $C_{11}>0$ such that
\begin{align}\label{45}
C_5\int_{\Omega}w^{3}\leq&C_9\parallel u\parallel^{3}_{L^{3}(\Omega)}+C_{10}
\leq\frac{\mu}{16}\int_{\Omega}u^{\alpha+1}+C_{11}\quad\mbox{for all}~~t\in(0,T_{max}).
\end{align}
Therefore, combining \eqref{42} with \eqref{45}, in view of $\alpha>2$, we can find a constant $C_{12}>0$ such that
\begin{align}\label{46}
\frac{|\xi|}{2}\int_{\Omega} u^{2}w\leq\frac{\mu}{8}\int_{\Omega} u^{\alpha+1}+C_{12}\quad\mbox{for all}~~t\in(0,T_{max}).
\end{align}
 Using the Young's inequality and $\alpha>2$, there is a constant $C_{13}>0$ such that
 \begin{align}\label{47}
a\int_{\Omega}u^{2}\leq\frac{\mu}{16}\int_{\Omega}u^{\alpha+1}+C_{13}\quad\mbox{for all}~~t\in(0,T_{max}).
\end{align}
 By the Young's inequality and $\alpha>2$ again, we obtain
\begin{align}\label{49}
\frac{|\xi|}{2}\int_{\Omega} u^{3}\leq \frac{\mu}{16}\int_{\Omega}u^{\alpha+1}+C_{14}\quad\mbox{for all}~~t\in(0,T_{max})
\end{align}
with $C_{14}>0$.
Putting \eqref{41},  \eqref{46} and \eqref{47} as well as \eqref{49} into \eqref{40}, then there exists a positive constant $C_{15}>0$ such that
\begin{align*}
\frac{d}{dt}\int_{\Omega}u^{2}+\int_{\Omega}|\nabla u|^{2}+\frac{\mu}{2}\int_{\Omega}u^{\alpha+1}
\leq2C_2\int_{\Omega}|\nabla v|^{6}+C_{15}\quad\mbox{for all}~~t\in(0,T_{max}).
\end{align*}
\textbf{Case \uppercase\expandafter{\romannumeral 2}: $\alpha=2$.} Recalling \eqref{40}, we have
\begin{align}\label{9940}
&\frac{1}{2}\frac{d}{dt}\int_{\Omega}u^{2}+\int_{\Omega}|\nabla u|^{2}\nonumber\\
=&\int_{\Omega}u\nabla u\cdot\nabla v-\xi\int_{\Omega}u\nabla u\cdot\nabla w+a\int_{\Omega}u^{2}-\mu\int_{\Omega}u^{3}\nonumber\\
=&\int_{\Omega}u\nabla u\cdot\nabla v-\frac{\xi}{2}\int_{\Omega}\nabla u^{2}\cdot\nabla w+a\int_{\Omega}u^{2}-\mu\int_{\Omega}u^{3}\nonumber\\
=&\int_{\Omega}u\nabla u\cdot\nabla v+\frac{\xi}{2}\int_{\Omega} u^{2}\Delta w+a\int_{\Omega}u^{2}-\mu\int_{\Omega}u^{3}\nonumber\\
=&\int_{\Omega}u\nabla u\cdot\nabla v+\frac{\xi}{2}\int_{\Omega} u^{2}(w-u)+a\int_{\Omega}u^{2}-\mu\int_{\Omega}u^{3}\nonumber\\
\leq&\int_{\Omega}u\nabla u\cdot\nabla v+\frac{|\xi|}{2}\int_{\Omega} u^{2}w+\frac{|\xi|}{2}\int_{\Omega} u^{3}+a\int_{\Omega}u^{2}-\mu\int_{\Omega}u^{3} \quad\mbox{for all}~~t\in(0,T_{max}).
\end{align}
For the first term on the right hand side of \eqref{9940}, we have
\begin{align}\label{9941}
\int_{\Omega}u\nabla u\cdot\nabla v\leq&\frac{1}{2}\int_{\Omega}|\nabla u|^{2}+\frac{1}{2}\int_{\Omega}u^{2}|\nabla v|^{2}\nonumber\\
\leq&\frac{1}{2}\int_{\Omega}|\nabla u|^{2}+\frac{\mu}{16}\int_{\Omega}u^{3}+C_1\int_{\Omega}|\nabla v|^{6}\quad\mbox{for all}~~t\in(0,T_{max})
\end{align}
with $C_1>0$.
Using the Young's inequality can derive that there is constant $C_2>0$ such that
\begin{align}\label{9947}
a\int_{\Omega}u^{2}\leq\frac{\mu}{16}\int_{\Omega}u^{3}+C_{2}\quad\mbox{for all}~~t\in(0,T_{max}).
\end{align}
In view of \eqref{42}-\eqref{46}, there exists a positive constant $C_3>0$ such that
\begin{align}\label{99474899}
\frac{|\xi|}{2}\int_{\Omega} u^{2}w\leq\frac{5\mu}{8}\int_{\Omega} u^{3}+C_{3}\quad\mbox{for all}~~t\in(0,T_{max}).
\end{align}
Combining the above inequalities, there is a positive constant $C_4>0$, which implies that
\begin{align}\label{99409999999}
&\frac{1}{2}\frac{d}{dt}\int_{\Omega}u^{2}+\int_{\Omega}|\nabla u|^{2}\nonumber\\
\leq&\int_{\Omega}u\nabla u\nabla v+\frac{|\xi|}{2}\int_{\Omega} u^{2}w+\frac{|\xi|}{2}\int_{\Omega} u^{3}+a\int_{\Omega}u^{2}-\mu\int_{\Omega}u^{3}\nonumber\\
\leq&\frac{1}{2}\int_{\Omega}|\nabla u|^{2}+\frac{\mu}{16}\int_{\Omega}u^{3}+C_1\int_{\Omega}|\nabla v|^{6}+\frac{\mu}{16}\int_{\Omega}u^{3}+\frac{5\mu}{8}\int_{\Omega} u^{3}+\frac{|\xi|}{2}\int_{\Omega}u^{3}-\mu\int_{\Omega}u^{3}+C_{4}\nonumber\\
=&\frac{1}{2}\int_{\Omega}|\nabla u|^{2}+C_1\int_{\Omega}|\nabla v|^{6}+\frac{|\xi|}{2}\int_{\Omega}u^{3}-\frac{\mu}{4}\int_{\Omega}u^{3}+C_{4} \quad\mbox{for all}~~t\in(0,T_{max}).
\end{align}
Let $\mu>2|\xi|$, we have
\begin{align}\label{994099999991}
\frac{d}{dt}\int_{\Omega}u^{2}+\int_{\Omega}|\nabla u|^{2}+\frac{\mu}{2}\int_{\Omega}u^{3}
\leq2C_1\int_{\Omega}|\nabla v|^{6}+C_{5}\quad\mbox{for all}~~t\in(0,T_{max})
\end{align}
with some positive constant $C_5>0$. Then the proof of Lemma \ref{Le3.301121} is completed.
\end{proof}

By using the standard elliptic regularity theory, we have the following estimates.
\begin{lemma}\label{Le3.930112}
Let $N=3$, there is a constant $C>0$ such that
\begin{equation}
\frac{d}{dt}\int_{\Omega}|\nabla v|^{4}+2\int_{\Omega}|\nabla v|^{2}|D^{2} v|^{2}+4\int_{\Omega}|\nabla v|^{4} \leq C \int_{\Omega}u^{3} \quad\mbox{for all}~~t\in(0,T_{max}).
\end{equation}
\end{lemma}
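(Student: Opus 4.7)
The plan is to differentiate $\int_\Omega |\nabla v|^4$ in time and use the chain rule together with the second equation in \eqref{origin}, yielding
\begin{equation*}
\frac{1}{4}\frac{d}{dt}\int_\Omega |\nabla v|^4 = \int_\Omega |\nabla v|^2 \nabla v\cdot\nabla\Delta v + \int_\Omega |\nabla v|^2 \nabla v\cdot\nabla[\nabla\cdot(v\nabla w)] - \int_\Omega |\nabla v|^4 + \int_\Omega |\nabla v|^2 \nabla v\cdot\nabla u.
\end{equation*}
The $-\int_\Omega |\nabla v|^4$ term is exactly what produces the $+4\int_\Omega |\nabla v|^4$ on the left after multiplication by $4$.

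For the diffusion contribution, I would invoke the pointwise identity $\nabla v\cdot\nabla\Delta v = \tfrac{1}{2}\Delta|\nabla v|^2 - |D^2 v|^2$, multiply by $|\nabla v|^2$, and integrate by parts to obtain $-\int_\Omega |\nabla v|^2 |D^2 v|^2 - \tfrac{1}{2}\int_\Omega |\nabla|\nabla v|^2|^2$ modulo a boundary integral $\int_{\partial\Omega}|\nabla v|^2 \partial_\nu|\nabla v|^2$. Under the Neumann condition $\partial_\nu v=0$, the latter is handled either by convexity of $\Omega$ (in which case $\partial_\nu|\nabla v|^2 \le 0$) or by the Mizoguchi--Souplet-type estimate $\partial_\nu|\nabla v|^2 \le C_\Omega |\nabla v|^2$ combined with a trace inequality and then absorbed into the interior $H^1$-norm of $|\nabla v|^2$. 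Multiplying by $4$ produces the initial coefficient $+4\int_\Omega|\nabla v|^2|D^2v|^2$ on the left, leaving a budget of $2\int_\Omega |\nabla v|^2|D^2 v|^2$ to be consumed by Young's inequality applied to the convection and source terms, which accounts for the final $+2$ appearing in the lemma.

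For the convection term I would expand $\nabla\cdot(v\nabla w) = \nabla v\cdot\nabla w + v\Delta w = \nabla v\cdot\nabla w + v(w-u)$ via the third equation, integrate by parts once to throw the outer derivative onto $|\nabla v|^2\nabla v$, and arrive at integrals bounded pointwise by $C|\nabla v|^2|D^2 v|\bigl(|\nabla v||\nabla w| + |v||w| + |v||u|\bigr)$. An analogous IBP on the source term $\int_\Omega |\nabla v|^2\nabla v\cdot\nabla u$ produces $\le C\int_\Omega u|\nabla v|^2|D^2 v|$. Each of these is then split by Young's inequality into a small multiple of $|\nabla v|^2|D^2 v|^2$ (absorbed on the left) plus a companion term. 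The companion terms involving $v, w, \nabla w$ are controlled using $v\in L^\infty(\Omega)$ from Lemma \ref{Le3.3010} and $W^{2,p}$ estimates for $w$ from $\Delta w = w-u$ (Lemma \ref{Le030.2} giving $L^{l_0}$ bounds on $w$), while the mixed $u$-$\nabla v$ contributions are folded into $\int_\Omega u^3$.

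The main obstacle will be closing the estimate so that only $\int_\Omega u^3$ appears on the right. After the Young applications the dangerous residue is $\int_\Omega u^2|\nabla v|^2$, and one must avoid reintroducing an $\int_\Omega|\nabla v|^6$ term that cannot be reabsorbed into the available $\int_\Omega|\nabla v|^2|D^2 v|^2$ with a linear coefficient. The way around this is to apply Hölder and Gagliardo--Nirenberg (Lemma \ref{GN}) to $\nabla v$ in three dimensions using $v\in L^\infty(\Omega)$ as the auxiliary low-norm bound, and to dispose of the $|\nabla w|$-factor through the elliptic $L^p$-estimate for $w$ in terms of $u$, so that every surviving piece is either an $\int_\Omega u^3$ contribution or an absorbable multiple of $\int_\Omega |\nabla v|^2 |D^2 v|^2$. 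Collecting the estimates and rearranging then yields the claimed differential inequality.
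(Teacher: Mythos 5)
Your overall architecture is right: differentiate, use the Bochner-type identity $\nabla v\cdot\nabla\Delta v=\tfrac12\Delta|\nabla v|^2-|D^2v|^2$, integrate by parts, and close via interpolation and elliptic regularity. Your attention to the boundary term $\int_{\partial\Omega}|\nabla v|^2\partial_\nu|\nabla v|^2$ is a genuine plus; the paper silently drops it, which is only correct on convex domains or after a Mizoguchi--Souplet-type estimate as you suggest. However, two points of substance are missing.

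First, for the convection term your plan — IBP once to throw the outer derivative onto $|\nabla v|^2\nabla v$ — produces a pointwise bound $\lesssim|\nabla v|^3|D^2v||\nabla w|$. If you then split this by Young into $\varepsilon|\nabla v|^2|D^2v|^2+C_\varepsilon|\nabla v|^4|\nabla w|^2$ and control the companion by H\"older, GN, and $\|\nabla w\|_{L^6}\le C\|u\|_{L^3}$, you arrive at a term $\{\int_\Omega|\nabla v|^2|D^2v|^2\}^{5/6}\|u\|_{L^3}$, and Young then gives $\|u\|^6_{L^3}=\{\int_\Omega u^3\}^2$ rather than $\int_\Omega u^3$. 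The paper avoids this by an extra manipulation before H\"older: using $\partial_iv\partial_{ij}v=\tfrac12\partial_j|\nabla v|^2$ to write $\int|\nabla v|^2\partial_iv\partial_{ij}v\partial_jw=\tfrac14\int\partial_j|\nabla v|^4\partial_jw=-\tfrac14\int|\nabla v|^4\Delta w$ (legal since $\partial_\nu w=0$), so that the differentiated factor becomes $D^2w$ rather than $D^2v$. This yields $\int|\nabla v|^4|D^2w|\le\|\nabla v\|_{L^6}^4\|D^2w\|_{L^3}$, which after the interpolation becomes $\{\int|\nabla v|^2|D^2v|^2\}^{2/3}\|u\|_{L^3}$ and closes via Young with exponents $3/2,3$ to give exactly $\int u^3$. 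The order of operations (H\"older and interpolation first, Young last, with the right exponents) is essential; Young applied pointwise on the integrand loses a power.

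Second, the key interpolation you need is the weighted estimate
\begin{equation*}
\int_\Omega|\nabla v|^6\le(4+\sqrt N)^2\,\|v\|_{L^\infty(\Omega)}^2\int_\Omega|\nabla v|^2|D^2v|^2,
\end{equation*}
which is Lemma 4.2 of Tao and Winkler [Tao Winkler 2021 NA] and is obtained by writing $\int|\nabla v|^6=\int|\nabla v|^4\nabla v\cdot\nabla v$, integrating by parts, and applying Cauchy--Schwarz. This is \emph{not} a consequence of the Gagliardo--Nirenberg inequality (Lemma \ref{GN}) — GN does not produce the weight $|\nabla v|^2$ in front of $|D^2v|^2$ — so your appeal to ``H\"older and GN using $v\in L^\infty$'' does not make the needed inequality precise. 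Moreover, your worry that an $\int|\nabla v|^6$ term ``cannot be reabsorbed into $\int|\nabla v|^2|D^2v|^2$ with a linear coefficient'' is the opposite of the truth: the displayed inequality says exactly that it \emph{can}, with the constant $(4+\sqrt N)^2\|v\|_{L^\infty}^2$, and this reabsorption (used in Lemma \ref{Le3.30112}) is in fact the mechanism that makes the whole $N=3$ energy coupling work. Without stating and using the Tao--Winkler inequality, and without the $\partial_iv\partial_{ij}v=\tfrac12\partial_j|\nabla v|^2$ trick, your estimate does not close with the claimed right-hand side $C\int_\Omega u^3$.
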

\begin{proof}
Recalling Lemma \ref{Le3.3010}, there is a constant $C_1>0$ such as
\begin{equation}\label{58}
\parallel v\parallel_{L^{\infty}(\Omega)}\leq C_1\quad\mbox{for all}~~t\in(0,T_{max}).
\end{equation}
In view of the standard elliptic regularity theory, we can find a constant $C_2>0$ satisfies
\begin{align}\label{59}
\parallel D^{2}v \parallel_{L^{3}(\Omega)}\leq C_2\parallel -\Delta v+v \parallel_{L^{3}(\Omega)}\quad\mbox{for all}~~v\in W^{2,3}(\Omega) \quad\mbox{such that}~~ \frac{\partial v}{\partial\nu}|_{\partial\Omega}=0.
\end{align}
Integrating by parts in the second equation from \eqref{origin}, we obtain that
\begin{align}\label{60}
\frac{1}{4}\frac{d}{dt}\int_{\Omega}|\nabla v|^{4}=&\int_{\Omega}|\nabla v|^{2}\nabla v\cdot\nabla\{\Delta v+\nabla\cdot(v\nabla w)-v+u\}\nonumber\\
=&\frac{1}{2}\int_{\Omega}|\nabla v|^{2}\Delta|\nabla v|^{2}-\int_{\Omega}|\nabla v|^{2}|D^{2}v|^{2}+\int_{\Omega}|\nabla v|^{2}\nabla v\cdot\nabla\cdot(\nabla v\cdot\nabla w)\nonumber\\
&+\int_{\Omega}|\nabla v|^{2}\nabla v\cdot\nabla\cdot(v\Delta w)-\int_{\Omega}|\nabla v|^{4}+\int_{\Omega}|\nabla v|^{2}\nabla v\cdot\nabla u\nonumber\\
=&-\frac{1}{2}\int_{\Omega}|\nabla|\nabla v|^{2}|^{2}+\frac{1}{2}\int_{\partial\Omega}|\nabla v|^{2}\frac{\partial |\nabla v|^{2}}{\partial\nu}-\int_{\Omega}|\nabla v|^{2}|D^{2}v|^{2}\nonumber\\
&+\int_{\Omega}|\nabla v|^{2}\nabla v\cdot\nabla\cdot(\nabla v\cdot\nabla w)-\int_{\Omega}v\Delta w\nabla\cdot(|\nabla v|^{2}\nabla v)\nonumber\\
&-\int_{\Omega}|\nabla v|^{4}-\int_{\Omega}u\cdot\{2\nabla v\cdot(|D^{2}v|^{2}\cdot\nabla v)+|\nabla v|^{2}\Delta v\}\nonumber\\
\leq&-\int_{\Omega}|\nabla v|^{2}|D^{2}v|^{2}+\int_{\Omega}|\nabla v|^{2}\nabla v\cdot\nabla\cdot(\nabla v\cdot\nabla w)-\int_{\Omega}v\Delta w\nabla\cdot(|\nabla v|^{2}\nabla v)\nonumber\\
&-\int_{\Omega}|\nabla v|^{4}+(2+\sqrt{N})\int_{\Omega}u|\nabla v|^{2}|D^{2}v|\quad\mbox{for all}~~t\in(0,T_{max}),
\end{align}
where we use the fact that
\begin{equation}\label{61}
|\Delta v|\leq\sqrt{N}|D^{2}v|~~\mbox{in} ~~\Omega~~\quad\mbox{for all}~~v\in C^{2}(\Omega).
\end{equation}
Combining the method which was given by Tao and Winkler \cite{Tao Winkler 2021 NA}, as well as the estimates \eqref{58} and \eqref{59}, we can draw that
\begin{align}\label{62}
&\int_{\Omega}|\nabla v|^{2}\nabla v\cdot\nabla\cdot(\nabla v\cdot\nabla w)\nonumber\\
\leq&\big(1+\frac{\sqrt{N}}{4}\big)\parallel\nabla v\parallel^{4}_{L^{6}(\Omega)}\parallel D^{2}w\parallel_{L^{3}(\Omega)}\nonumber\\
\leq&\big(1+\frac{\sqrt{N}}{4}\big)(4+\sqrt{N})^{\frac{4}{3}}\parallel v\parallel^{\frac{4}{3}}_{L^{\infty}(\Omega)}
\cdot\left\{\int_{\Omega}|\nabla v|^{2}|D^{2}v|^{2}\right\}^{\frac{2}{3}}\cdot\parallel D^{2}w\parallel_{L^{3}(\Omega)}\nonumber\\
\leq&\big(1+\frac{\sqrt{N}}{4}\big)(4+\sqrt{N})^{\frac{4}{3}}C_1^{\frac{4}{3}}C_2\cdot\left\{\int_{\Omega}|\nabla v|^{2}|D^{2}v|^{2}\right\}^{\frac{2}{3}}\cdot\parallel u\parallel_{L^{3}(\Omega)}
\end{align}
for all $t\in(0,T_{max})$. Similarly, we also can get that
\begin{align}\label{63}
&-\int_{\Omega}v\Delta w\nabla\cdot(|\nabla v|^{2}\nabla v)\nonumber\\
\leq&(2+\sqrt{N})\parallel v \parallel_{L^{\infty}(\Omega)}\parallel \nabla v\parallel_{L^{6}(\Omega)}\cdot\left\{\int_{\Omega}|\nabla v|^{2}|D^{2}v|^{2}\right\}^{\frac{1}{2}}\cdot\parallel\Delta w\parallel_{L^{3}(\Omega)}\nonumber\\
\leq&(2+\sqrt{N})(4+\sqrt{N})^{\frac{1}{3}}\parallel v\parallel^{\frac{4}{3}}_{L^{\infty}(\Omega)}\cdot\left\{\int_{\Omega}|\nabla v|^{2}|D^{2}v|^{2}\right\}^{\frac{2}{3}}\cdot\parallel\Delta w\parallel_{L^{3}(\Omega)}\nonumber\\
\leq&(2+\sqrt{N})(4+\sqrt{N})^{\frac{1}{3}}\sqrt{N}C_1^{\frac{4}{3}}C_2\cdot\left\{\int_{\Omega}|\nabla v|^{2}|D^{2}v|^{2}\right\}^{\frac{2}{3}}\cdot\parallel u\parallel_{L^{3}(\Omega)}
\end{align}
for all $t\in(0,T_{max})$. In view of the H\"{o}lder inequality, we have
\begin{align}\label{64}
&(2+\sqrt{N})\int_{\Omega}u|\nabla v|^{2}|D^{2}v|\nonumber\\
\leq&(2+\sqrt{N})\cdot\left\{\int_{\Omega} u^{2}|\nabla v|^{2}\right\}^{\frac{1}{2}}\cdot\left\{\int_{\Omega}|\nabla v|^{2}|D^{2}v|^{2}\right\}^{\frac{1}{2}}\nonumber\\
\leq&(2+\sqrt{N})\parallel u\parallel_{L^{3}(\Omega)}\parallel \nabla v\parallel_{L^{6}(\Omega)}\cdot\left\{\int_{\Omega}|\nabla v|^{2}|D^{2}v|^{2}\right\}^{\frac{1}{2}}\nonumber\\
\leq&(2+\sqrt{N})(4+\sqrt{N})^{\frac{1}{3}}\parallel u\parallel_{L^{3}(\Omega)}\parallel v\parallel^{\frac{1}{3}}_{L^{\infty}(\Omega)}\cdot\left\{\int_{\Omega}|\nabla v|^{2}|D^{2}v|^{2}\right\}^{\frac{2}{3}}\nonumber\\
\leq&(2+\sqrt{N})(4+\sqrt{N})^{\frac{1}{3}}C_1^{\frac{1}{3}}\parallel u\parallel_{L^{3}(\Omega)}\cdot\left\{\int_{\Omega}|\nabla v|^{2}|D^{2}v|^{2}\right\}^{\frac{2}{3}}
\end{align}
for all $t\in(0,T_{max})$.
Using Young's inequality, we obtain that
\begin{align}\label{640}
&\int_{\Omega}|\nabla v|^{2}\nabla v\cdot\nabla\cdot(\nabla v\cdot\nabla w)-\int_{\Omega}v\Delta w\nabla\cdot(|\nabla v|^{2}\nabla v)+(2+\sqrt{N})\int_{\Omega}u|\nabla v|^{2}|D^{2}v|\nonumber\\
\leq&\left\{\frac{1}{2}\int_{\Omega}|\nabla v|^{2}|D^{2}v|^{2} \right\}^{\frac{2}{3}}\cdot\left\{2^{\frac{2}{3}}C_3\parallel u\parallel_{L^{3}(\Omega)}\right\}\nonumber\\
\leq&\frac{1}{2}\int_{\Omega}|\nabla v|^{2}|D^{2}v|^{2}+\left\{2^{\frac{2}{3}}C_3\parallel u\parallel_{L^{3}(\Omega)}\right\}^{3}\nonumber\\
=&\frac{1}{2}\int_{\Omega}|\nabla v|^{2}|D^{2}v|^{2}+4C_3^{3}\parallel u\parallel^{3}_{L^{3}(\Omega)}\quad\mbox{for all}~~t\in(0,T_{max}),
\end{align}
where
\begin{align}\label{640987654}
C_3=&\big(1+\frac{\sqrt{N}}{4}\big)(4+\sqrt{N})^{\frac{4}{3}}C_1^{\frac{4}{3}}C_2+(2+\sqrt{N})(4+\sqrt{N})^{\frac{1}{3}}\sqrt{N}
C_1^{\frac{4}{3}}C_2\nonumber\\
&+(2+\sqrt{N})(4+\sqrt{N})^{\frac{1}{3}}C_1^{\frac{1}{3}}.
\end{align}
Substituting \eqref{640} into \eqref{60}, the result of Lemma \ref{Le3.930112} is obtained.
\end{proof}

In order to prove our main theorem, we also need the following estimates of $u$ and $v$.
\begin{lemma}\label{Le3.30112}
Assume that the initial data satisfies \eqref{origin2}. Let $N=3$, $\alpha>2$, for any $\mu>0$ or $\alpha=2$, $\mu>\max\{2|\xi|,2C_3A\}$, $C_3$ is the same as \eqref{640987654} and $A$ is described in the following proof, then there exists a constant $C>0$ such that
\begin{equation}\label{69}
\int_{\Omega}u^{2}(\cdot,t)\leq C  \quad\mbox{for all}~~t\in(0,T_{max})
\end{equation}
and
\begin{equation}\label{70}
\int_{\Omega}|\nabla v(\cdot,t)|^{4}\leq C  \quad\mbox{for all}~~t\in(0,T_{max}).
\end{equation}
\end{lemma}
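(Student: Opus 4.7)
The plan is to combine the two differential inequalities of Lemma \ref{Le3.301121} and Lemma \ref{Le3.930112} into a single ODI for the weighted quantity
\begin{equation*}
Y(t):=\int_\Omega u^2(\cdot,t)+A\int_\Omega |\nabla v(\cdot,t)|^4,
\end{equation*}
where $A>0$ is a constant to be chosen in the course of the proof (this choice is what fixes the threshold $2C_3A$ that appears in the statement).

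I would first add $A$ times the inequality of Lemma \ref{Le3.930112} to the inequality of Lemma \ref{Le3.301121}, obtaining
\begin{equation*}
Y'(t)+\int_\Omega|\nabla u|^2+\frac{\mu}{2}\int_\Omega u^{\alpha+1}+2A\int_\Omega|\nabla v|^2|D^2v|^2+4A\int_\Omega|\nabla v|^4\le C_a\int_\Omega|\nabla v|^6+AC_3\int_\Omega u^3+C_b
\end{equation*}
with some absolute constants $C_a,C_b>0$. The $\int u^3$-term on the right is then absorbed on the left: when $\alpha>2$, Young's inequality gives $AC_3\int u^3\le\frac{\mu}{4}\int u^{\alpha+1}+C$, whereas when $\alpha=2$ the identity $\int u^{\alpha+1}=\int u^3$ together with the hypothesis $\mu>2C_3A$ yields a strictly positive coefficient $\frac{\mu}{2}-AC_3$ in front of $\int u^3$, achieving the same absorption outright.

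The decisive step is the treatment of the $\int|\nabla v|^6$-term, and I expect this to be the main obstacle. I would apply the Gagliardo-Nirenberg inequality (Lemma \ref{GN}) to $\phi:=|\nabla v|^2$, exploiting the elementary pointwise bound $|\nabla\phi|^2\le 4|\nabla v|^2|D^2v|^2$ so that $\|\nabla\phi\|_{L^2}^2$ is controlled by the favourable dissipation term $\int|\nabla v|^2|D^2v|^2$. The $L^\infty$-bound $\|v\|_{L^\infty(\Omega)}\le C$ furnished by Lemma \ref{Le3.3010}, in conjunction with the Cauchy-Schwarz estimate $\int|\nabla v|^2\le|\Omega|^{1/2}\big(\int|\nabla v|^4\big)^{1/2}$, provides $L^q$-control of $\phi$ for a suitable $q\in[1,2)$. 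Interpolating $\|\phi\|_{L^3}$ between $\|\nabla\phi\|_{L^2}$ and this $\|\phi\|_{L^q}$ in the three-dimensional setting, and then invoking Young's inequality, the term $C_a\int|\nabla v|^6$ should split into a small multiple of $2A\int|\nabla v|^2|D^2v|^2$ plus lower-order quantities dominated either by $4A\int|\nabla v|^4$ or by the time-integrated bound $\int_t^{t+\tau}\int u^\alpha\le C$ from Lemma \ref{Le3.0}. Fine-tuning these exponents so that the leftover power of $\int|\nabla v|^4$ does not exceed the available linear dissipation is precisely what pins down the admissible value of $A$.

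After all these absorptions, the combined ODI should take the form $Y'(t)+cY(t)\le h(t)$ with some $c>0$ and a nonnegative function $h$ whose time-integral $\int_t^{t+\tau}h(s)\,ds$ is uniformly bounded (using Lemma \ref{Le3.0}). An application of Lemma \ref{Le4} then yields $Y(t)\le C$ for all $t\in(0,T_{max})$, which is exactly the conjunction of \eqref{69} and \eqref{70}.
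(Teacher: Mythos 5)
Your high-level architecture matches the paper's: form $y(t)=\int_\Omega u^2+A\int_\Omega|\nabla v|^4$, add $A$ times the $\nabla v$-inequality of Lemma \ref{Le3.930112} to the $u$-inequality of Lemma \ref{Le3.301121}, absorb the cross term $C_3A\int_\Omega u^3$ either by Young's inequality ($\alpha>2$) or by the hypothesis $\mu>2C_3A$ ($\alpha=2$), and close with an ODE comparison. That part is sound.

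The gap is in your treatment of $\int_\Omega|\nabla v|^6$, which you correctly identify as the decisive step. Your plan is to set $\phi:=|\nabla v|^2$, bound $\|\nabla\phi\|_{L^2}^2\le4\int_\Omega|\nabla v|^2|D^2v|^2$, and interpolate $\|\phi\|_{L^3}$ between $\|\nabla\phi\|_{L^2}$ and $\|\phi\|_{L^q}$ for some $q\in[1,2)$. In $N=3$, if you choose $q$ small enough to make the remainder manageable, the Gagliardo--Nirenberg exponent on $\|\nabla\phi\|_{L^2}$ exceeds $2/3$, so after cubing you get $(\int|\nabla v|^2|D^2v|^2)^{\beta}$ with $\beta>1$, which cannot be absorbed by the linear dissipation $2A\int|\nabla v|^2|D^2v|^2$. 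Conversely, taking $q=3/2$ (the borderline value making $\beta=1$) leaves a remainder $\|\phi\|_{L^{3/2}}^3=(\int_\Omega|\nabla v|^3)^2$, which by interpolation is $\gtrsim(\int_\Omega|\nabla v|^4)^{3/2}$: superlinear in $\int|\nabla v|^4$ and hence not absorbable by $4A\int_\Omega|\nabla v|^4$ without a smallness argument you do not have. Moreover, the way you propose to inject $\|v\|_{L^\infty}$ — via the Cauchy--Schwarz estimate $\int|\nabla v|^2\le|\Omega|^{1/2}(\int|\nabla v|^4)^{1/2}$ — does not actually couple $\|v\|_{L^\infty}$ to any $L^q$ norm of $\phi$; it just rewrites $\|\phi\|_{L^1}$ in terms of $\|\phi\|_{L^2}$, which is the unknown you are trying to bound.

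The paper avoids this circle entirely: it does \emph{not} run Gagliardo--Nirenberg on $|\nabla v|^2$, but instead invokes the integration-by-parts inequality of Tao and Winkler \cite{Tao Winkler 2021 NA} (their Lemma 4.2), namely
\begin{equation*}
\int_\Omega|\nabla v|^6\le(4+\sqrt N)^2\|v\|_{L^\infty(\Omega)}^2\int_\Omega|\nabla v|^2|D^2v|^2
\end{equation*}
for $v$ with homogeneous Neumann data. Here the $L^\infty$ bound on $v$ enters through integration by parts (the boundary term vanishes, $v$ is pulled out as a factor) followed by a Cauchy--Schwarz closure, rather than through interpolation. With this, $C_2\int|\nabla v|^6$ is bounded by $A\int|\nabla v|^2|D^2v|^2$ with $A=(4+\sqrt N)^2C_1^2C_2$ (where $C_1$ bounds $\|v\|_{L^\infty}$), and the coefficient $2A$ coming from $A\times$Lemma \ref{Le3.930112} leaves $A\int|\nabla v|^2|D^2v|^2$ of genuine dissipation. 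The resulting ODI is simply $y'+4y\le C$ with constant right-hand side, so a direct ODE comparison suffices and Lemma \ref{Le4} (and the spatio-temporal bound from Lemma \ref{Le3.0}) is not needed. You should replace the Gagliardo--Nirenberg step by this Tao--Winkler inequality; otherwise the $\int|\nabla v|^6$-term is not controlled.
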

\begin{proof}
According to Lemma \ref{Le3.3010}, Lemma \ref{Le3.301121} and Lemma \ref{Le3.930112}, if the initial data $(u_0, v_0)$ satisfies \eqref{origin2}, for $N=3$, there are some positive constants $C_i$ $(i=1,2,3)$ such that
\begin{equation}\label{71}
\parallel v\parallel_{L^{\infty}(\Omega)}\leq C_1\quad\mbox{for all}~~t\in(0,T_{max})
\end{equation}
and
\begin{equation}\label{972}
\frac{d}{dt}\int_{\Omega}u^{2}+\int_{\Omega}|\nabla u|^{2}+\frac{\mu}{2}\int_{\Omega}u^{\alpha+1}\leq C_2\int_{\Omega}|\nabla v|^{6}+C_2
 \quad\mbox{for all}~~t\in(0,T_{max})
\end{equation}
as well as
\begin{equation}\label{73}
\frac{d}{dt}\int_{\Omega}|\nabla v|^{4}+2\int_{\Omega}|\nabla v|^{2}|D^{2} v|^{2}+4\int_{\Omega}|\nabla v|^{4} \leq C_3 \int_{\Omega}u^{3} \quad\mbox{for all}~~t\in(0,T_{max}).
\end{equation}
Using Lemma 4.2 in Tao and Winkler \cite{Tao Winkler 2021 NA}, we have
\begin{align}\label{76}
C_2\int_{\Omega}|\nabla v|^{6}\leq&(4+\sqrt{N})^{2}C_2\parallel v\parallel^{2}_{L^{\infty}(\Omega)}\int_{\Omega}|\nabla v|^{2}|D^{2} v|^{2}\nonumber\\
\leq&(4+\sqrt{N})^{2}C_1^{2}C_2\int_{\Omega}|\nabla v|^{2}|D^{2} v|^{2}\quad\mbox{for all}~~t\in(0,T_{max}),
\end{align}
which implies that
\begin{align}\label{7611}
\int_{\Omega}|\nabla v|^{2}|D^{2} v|^{2}
\geq\frac{C_2}{A}\int_{\Omega}|\nabla v|^{6}\quad\mbox{for all}~~t\in(0,T_{max})
\end{align}
with $A:=(4+\sqrt{N})^{2}C_1^{2}C_2$.

Then, $A\times$\eqref{73}+ \eqref{972} implies that
\begin{align}\label{7312}
&A\frac{d}{dt}\int_{\Omega}|\nabla v|^{4}+\frac{d}{dt}\int_{\Omega}u^{2}+\int_{\Omega}|\nabla u|^{2}+\frac{\mu}{2}\int_{\Omega}u^{\alpha+1}-C_3A\int_{\Omega}u^{3}+C_2\int_{\Omega}|\nabla v|^{6}+4A\int_{\Omega}|\nabla v|^{4}\nonumber\\
\leq& C_2\quad\mbox{for all}~~t\in(0,T_{max}).
\end{align}
Specifically, if $\alpha=2$, one can choose $\mu>2C_3A$.
Let
\begin{align}\label{77}
y(t):=\int_{\Omega}u^{2}+A\int_{\Omega}|\nabla v|^{4} ~~t\in(0,T_{max}),
\end{align}
whence \eqref{7312} shows that
\begin{align}\label{78}
&y'(t)+4y(t)\nonumber\\
\leq&\left\{-\frac{\mu}{2}\int_{\Omega}u^{3}-\int_{\Omega}|\nabla u|^{2}+(4+\sqrt{N})^{2}C_1^{2}C_2\int_{\Omega}|\nabla v|^{2}|D^{2} v|^{2}+C_2\right\}\nonumber\\
&+\left\{-2A\int_{\Omega}|\nabla v|^{2}|D^{2} v|^{2}-4A\int_{\Omega}|\nabla v|^{4} +C_3A \int_{\Omega}u^{3}\right\}\nonumber\\
&+4\left\{\int_{\Omega}u^{2}+A\int_{\Omega}|\nabla v|^{4}\right\}\nonumber\\
\leq&\left\{-\frac{\mu}{2}+AC_3\right\}\cdot\int_{\Omega}u^{3}+4\int_{\Omega}u^{2}+C_2+\left\{(4+\sqrt{N})^{2}C_1^{2}C_2-2A\right\}
\cdot\int_{\Omega}|\nabla v|^{2}|D^{2} v|^{2}\nonumber\\
\leq&-\frac{\mu}{4}\int_{\Omega}u^{3}+4\int_{\Omega}u^{2}+C_2\quad\mbox{for all}~~t\in(0,T_{max}).
\end{align}
By the Young's inequality, we can obtain
\begin{align}\label{79}
4u^2=\left\{\frac{\mu}{4}u^3\right\}^{\frac{2}{3}}\cdot\left\{4\cdot(\frac{4}{\mu})^{\frac{2}{3}}\right\}\leq\frac{\mu}{4}u^3
+\left\{4\cdot(\frac{4}{\mu})^{\frac{2}{3}}\right\}^{3}=\frac{\mu}{4}u^3+\frac{1024}{\mu^{2}}
\end{align}
in $\Omega\times(0,T_{max})$. Inserting \eqref{79} into \eqref{78}, we can deduce that
\begin{align}\label{711}
y'(t)+4y(t)\leq C_4 \quad\mbox{for all}~~t\in(0,T_{max})
\end{align}
with $C_4=\frac{1024|\Omega|}{\mu^{2}}+C_2 $. As a consequence of an ODE comparison, we can infer that
\begin{align}\label{712}
y(t)\leq\max\left\{\int_{\Omega}u_0^{2}+A\int_{\Omega}|\nabla v_0|^{4},\frac{C_4}{4}\right\}\quad\mbox{for all}~~t\in(0,T_{max}).
\end{align}
Similarly, if $\alpha>2$, there are constants $C_4>0$ and $C_5>0$ such that
\begin{align}\label{713}
y'(t)+4y(t)\leq&\left\{-\frac{\mu}{2}\int_{\Omega}u^{\alpha+1}+(4+\sqrt{N})^{2}C_1^{2}C_2\int_{\Omega}|\nabla v|^{2}|D^{2} v|^{2}+C_2\right\}\nonumber\\
&+\left\{-2A\int_{\Omega}|\nabla v|^{2}|D^{2} v|^{2}-4A\int_{\Omega}|\nabla v|^{4} +C_3A \int_{\Omega}u^{3}\right\}\nonumber\\
&+4\left\{\int_{\Omega}u^{2}+A\int_{\Omega}|\nabla v|^{4}\right\}\nonumber\\
\leq&-\frac{\mu}{4}\cdot\int_{\Omega}u^{\alpha+1}+4\int_{\Omega}u^{2}+C_5+\left\{(4+\sqrt{N})^{2}C_1^{2}C_2-2A\right\}
\cdot\int_{\Omega}|\nabla v|^{2}|D^{2} v|^{2}\nonumber\\
\leq&-\frac{\mu}{8}\int_{\Omega}u^{\alpha+1}+C_6\nonumber\\
\leq& C_6\quad\mbox{for all}~~t\in(0,T_{max}).
\end{align}
Through the standard ODE comparison arguments, we have
\begin{align}\label{714}
y(t)\leq\max\left\{\int_{\Omega}u_0^{2}+A\int_{\Omega}|\nabla v_0|^{4},\frac{C_6}{4}\right\}\quad\mbox{for all}~~t\in(0,T_{max}).
\end{align}
Thus, the proof of this lemma is finished.
\end{proof}

Based on the above estimates and using the standard elliptic regularity theory again, we can get the estimates as below for $N=3$.
\begin{lemma}\label{Le3.113}
Let $N=3$, $\alpha>2$, for any $\mu>0$ or $\alpha=2$, $\mu>\max\{2|\xi|,2C_3A\}$, $C_3$ and $A$ is the same as in Lemma \ref{Le3.30112}. Then there exists a constant $C>0$ such that
\begin{equation}\label{800}
\parallel w(\cdot,t)\parallel_{W^{1,4}(\Omega)}\leq C \quad\mbox{for all}~~t\in(0,T_{max}).
\end{equation}
Moreover, we have
\begin{equation}\label{800991}
\parallel w(\cdot,t)\parallel_{L^{\infty}(\Omega)}\leq C \quad\mbox{for all}~~t\in(0,T_{max}).
\end{equation}
\end{lemma}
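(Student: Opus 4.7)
The plan is to exploit the uniform $L^{2}$-estimate of $u$ just established in Lemma \ref{Le3.30112}, combined with standard elliptic regularity for the Neumann problem $-\Delta w+w=u$ encoded in the third equation of \eqref{origin}, and then read off both conclusions from Sobolev embeddings tailored to $N=3$.

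More precisely, Lemma \ref{Le3.30112} furnishes a constant $C_{1}>0$ such that $\parallel u(\cdot,t)\parallel_{L^{2}(\Omega)}\leq C_{1}$ for all $t\in(0,T_{max})$. Since $w(\cdot,t)$ solves the elliptic Neumann problem $-\Delta w+w=u$ with $\frac{\partial w}{\partial\nu}=0$ on $\partial\Omega$, the classical $L^{p}$-theory of elliptic equations (of the kind already used in the proof of Lemma \ref{Le3.03}) yields a constant $C_{2}>0$ such that
\begin{equation*}
\parallel w(\cdot,t)\parallel_{W^{2,2}(\Omega)}\leq C_{2}\parallel u(\cdot,t)\parallel_{L^{2}(\Omega)}\leq C_{1}C_{2}\quad\mbox{for all}~~t\in(0,T_{max}).
\end{equation*}
From here the two statements follow by Sobolev embedding in dimension $N=3$: on the one hand, $W^{2,2}(\Omega)\hookrightarrow W^{1,6}(\Omega)\hookrightarrow W^{1,4}(\Omega)$, which gives \eqref{800}; on the other hand, since $2\cdot 2=4>3=N$, one has $W^{2,2}(\Omega)\hookrightarrow C^{0}(\overline{\Omega})$, which delivers \eqref{800991}.

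I do not anticipate a genuine obstacle in this step, as all the substantive analytic work lies upstream in Lemma \ref{Le3.30112}; the only point requiring minor care is to ensure that the elliptic estimate is applied in a form compatible with the homogeneous Neumann boundary condition, which is standard on smooth bounded domains. A cosmetic alternative would be to bypass $W^{2,2}$ altogether and argue through Br\'ezis--Strauss as in Lemma \ref{Le030.2}, but the $W^{2,2}$ route is the shortest because it produces the $L^{\infty}$ bound and the $W^{1,4}$ bound simultaneously from the same estimate.
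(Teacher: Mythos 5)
Your proposal is correct and follows essentially the same route as the paper: the $L^{2}$-bound on $u$ from Lemma \ref{Le3.30112}, elliptic $W^{2,2}$-regularity for $-\Delta w+w=u$ under Neumann conditions, and the Sobolev embeddings $W^{2,2}\hookrightarrow W^{1,4}$ and $W^{2,2}\hookrightarrow L^{\infty}$ in dimension $N=3$. The only cosmetic difference is that the paper obtains the $L^{\infty}$ bound by passing through $W^{1,4}\hookrightarrow L^{\infty}$, whereas you read it off directly from $W^{2,2}$; both are immediate.
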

\begin{proof}
By standard elliptic regularity theory, there is a positive constant $C_1$ such that
\begin{equation}\label{801}
\parallel w\parallel_{W^{2,2}(\Omega)}\leq C_1\parallel-\Delta w+w\parallel_{L^2(\Omega)}\quad\mbox{for all}~~w\in W^{2,2}(\Omega) \quad\mbox {such that} ~~\frac{\partial w}{\partial\nu}\big|_{\partial \Omega}=0.
\end{equation}
Since $W^{2,2}(\Omega)\hookrightarrow W^{1,4}(\Omega)$ with $N=3$, hence there is a constant $C_2>0$ such that
\begin{equation}\label{802}
\parallel w\parallel_{W^{1,4}(\Omega)}\leq C_2\parallel w\parallel_{W^{2,2}(\Omega)}\quad\mbox{for all}~~w\in W^{2,2}(\Omega) .
\end{equation}
Recalling the third equation in \eqref{origin} and Lemma \ref{Le3.30112}, we can pick a constant $C_3>0$ fulfilling
\begin{equation}\label{803}
\parallel w\parallel_{W^{1,4}(\Omega)}\leq C_1C_2\parallel-\Delta w+w\parallel_{L^2(\Omega)}\leq C_1C_2\parallel u\parallel _{L^2(\Omega)}\leq C_3 \quad\mbox{for all}~~t\in(0,T_{max}).
\end{equation}
Along with the Sobolev embedding can draw that
\begin{equation}\label{803999}
\parallel w\parallel_{L^{\infty}(\Omega)}\leq C_4\quad\mbox{for all}~~t\in(0,T_{max})
\end{equation}
with some positive constant $C_4>0$. Then we complete the proof of this lemma.
\end{proof}

Next, we consider the case $N=2$.
\begin{lemma}\label{Le003.13}
Let $N=2$, $\alpha=2$, for any $\mu>0$ and $t\in(0,T_{max})$, there exist positive constants $C>0$ and $M>0$ such that
\begin{align}\label{Le003.3111110}
&\frac{d}{dt}\int_{\Omega}(u+1)\ln(u+1)+\int_{\Omega}\frac{|\nabla u|^2}{u+1}+\mu\int_{\Omega}u^2\ln(u+1)\nonumber\\
\leq&M\int_{\Omega}u^2+\frac{1}{4M}\int_{\Omega}|\Delta v|^2+C\int_{\Omega}u^2+a\int_{\Omega}u\ln(u+1)+C.
\end{align}
\end{lemma}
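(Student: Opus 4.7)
The plan is to multiply the first equation of \eqref{origin} by the test function $\ln(u+1)+1$ and integrate over $\Omega$. Since $\partial_t\big[(u+1)\ln(u+1)\big]=[\ln(u+1)+1]u_t$, this directly yields $\frac{d}{dt}\int_\Omega(u+1)\ln(u+1)$ on the left. Together with the Neumann boundary conditions, the diffusion term produces $-\int_\Omega\frac{|\nabla u|^2}{u+1}$ after one integration by parts (using $\nabla\ln(u+1)=\nabla u/(u+1)$), while the logistic source $-\mu u^{\alpha}=-\mu u^{2}$ contributes $-\mu\int_\Omega u^{2}\ln(u+1)-\mu\int_\Omega u^{2}$; the first of these becomes the remaining left-hand term, the second is non-positive and may be dropped. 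The linear source $au$ yields $a\int_\Omega u\ln(u+1)+a\int_\Omega u$, and the latter is bounded via Lemma \ref{Le3.1} and absorbed into the constant $C$.

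The crucial algebraic observation for the two drift terms is the identity
$$\nabla\big(u-\ln(u+1)\big)=\frac{u}{u+1}\nabla u,$$
which enables a second integration by parts producing Laplacians of $v$ and $w$. Concretely,
$$-\int_\Omega[\ln(u+1)+1]\nabla\cdot(u\nabla v)=\int_\Omega\frac{u}{u+1}\nabla u\cdot\nabla v=-\int_\Omega\big(u-\ln(u+1)\big)\Delta v,$$
and Young's inequality combined with the elementary bound $0\le u-\ln(u+1)\le u$ immediately gives the contribution $M\int_\Omega u^{2}+\frac{1}{4M}\int_\Omega|\Delta v|^{2}$ appearing on the right-hand side. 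The analogous computation for the $w$-term yields $\xi\int_\Omega\big(u-\ln(u+1)\big)\Delta w$, into which I would substitute $\Delta w=w-u$ from the third equation of \eqref{origin}, splitting it as $\xi\int_\Omega(u-\ln(u+1))w-\xi\int_\Omega(u-\ln(u+1))u$.

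The main obstacle is to control the $w$-contribution uniformly in $\xi\in\mathbb{R}$. The second piece is straightforward: since $0\le u-\ln(u+1)\le u$, it is dominated by $|\xi|\int_\Omega u^{2}$. For the first piece, I invoke Cauchy--Schwarz and Young's inequality to get $|\xi|\int_\Omega(u-\ln(u+1))|w|\le \tfrac{|\xi|}{2}\int_\Omega u^{2}+\tfrac{|\xi|}{2}\int_\Omega w^{2}$, and here the assumption $N=2$ is essential: Lemma \ref{Le030.2} then allows $l_0=2$ (the admissible range being $[1,\infty)$ when $N=2$), providing a uniform bound on $\int_\Omega w^{2}$ which is absorbed into the constant $C$. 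Collecting all the estimates, the $\xi$-dependent terms combine into the single expression $C\int_\Omega u^{2}$, the bounded quantities combine into $C$, and one recovers exactly the claimed inequality \eqref{Le003.3111110}. The argument makes no smallness assumption on $\xi$, which is the content of the statement.
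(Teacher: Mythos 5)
Your proposal is correct and follows essentially the same route as the paper: testing with $\ln(u+1)+1$ is equivalent to the paper's choice of testing with $\ln(u+1)$ and adding the mass identity for $\tfrac{d}{dt}\int_\Omega u$, and your primitive $u-\ln(u+1)$ is exactly the paper's $\int_0^u\frac{\tau}{\tau+1}\,d\tau$, with the same second integration by parts, the same substitution $\Delta w=w-u$, the same Young's inequality, and the same appeal to Lemma \ref{Le030.2} with $l_0=2$ (admissible since $N=2$) to control $\int_\Omega w^2$.
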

\begin{proof}
Based on the equality \eqref{0L01} and the first equation in \eqref{origin}, we can obtain
\begin{align}\label{Le003.3111111}
&\frac{d}{dt}\int_{\Omega}(u+1)\ln(u+1)\nonumber\\
=&\int_{\Omega}u_{t}\ln(u+1)+\frac{d}{dt}\int_{\Omega}u\nonumber\\
=&\int_{\Omega}u_{t}\ln(u+1)+\int_{\Omega}au-\mu u^2\nonumber\\
\leq&\int_{\Omega}\ln(u+1)(\Delta u-\nabla\cdot(u\nabla v)+\xi\nabla\cdot(u\nabla w)+au-\mu u^{2})-\frac{\mu}{2}\int_{\Omega}u^2+C_1\nonumber\\
=&-\int_{\Omega}\frac{|\nabla u|^2}{u+1}+\int_{\Omega}\frac{u}{u+1}\nabla u\cdot \nabla v-\xi\int_{\Omega}\frac{u}{u+1}\nabla u\cdot\nabla w+a\int_{\Omega}u\ln(u+1)\nonumber\\
&-\mu\int_{\Omega}u^2\ln(u+1)-\frac{\mu}{2}\int_{\Omega}u^2+C_1\quad\mbox{for all}~~t\in(0,T_{max}).
\end{align}
For the  second term on the right hand side of \eqref{Le003.3111111}, with the help of Young' inequality, for any $M>0$, we have
\begin{align}\label{Le003.3111112}
\int_{\Omega}\frac{u}{u+1}\nabla u \cdot\nabla vdx&=\int_{\Omega}\nabla\cdot\int^u_0\frac{\tau}{\tau+1}d\tau \nabla vdx\nonumber\\
&=-\int_{\Omega}\int^u_0\frac{\tau}{\tau+1}d\tau \Delta v dx\nonumber\\
&\leq\int_{\Omega}|\int^u_0\frac{\tau}{\tau+1}d\tau|\cdot |\Delta v| dx\nonumber\\
&\leq\int_{\Omega}u|\Delta v| dx\nonumber\\
&\leq M\int_{\Omega}u^2dx+\frac{1}{4M}\int_{\Omega}|\Delta v|^2dx \quad\mbox{for all}~~t\in(0,T_{max}).
\end{align}
Concerning the third term presented on the right-hand side of \eqref{Le003.3111111} and using the third equation in \eqref{origin}. It follows from Lemma \ref{Le030.2}, we also have
\begin{align}\label{Le003.3111113}
-\xi\int_{\Omega}\frac{u}{u+1}\nabla u\cdot\nabla wdx
&=-\xi\int_{\Omega}\nabla\cdot\int^u_0\frac{\tau}{\tau+1}d\tau \nabla wdx\nonumber\\
&=\xi\int_{\Omega}\int^u_0\frac{\tau}{\tau+1}d\tau \Delta wdx\nonumber\\
&=\xi\int_{\Omega}\frac{\tau}{\tau+1}d\tau wdx-\xi\int_{\Omega}\frac{\tau}{\tau+1}d\tau udx\nonumber\\
&\leq|\xi|\int_{\Omega}uwdx+|\xi|\int_{\Omega}u^{2}dx\nonumber\\
&\leq\frac{|\xi|}{2}\int_{\Omega}u^{2}dx+\frac{|\xi|}{2}\int_{\Omega}w^{2}dx+|\xi|\int_{\Omega}u^{2}dx\nonumber\\
&\leq\frac{3|\xi|}{2}\int_{\Omega}u^{2}dx+C_{2}\quad\mbox{for all}~~t\in(0,T_{max})
\end{align}
with $C_2>0$.
Inserting \eqref{Le003.3111112} and \eqref{Le003.3111113} into \eqref{Le003.3111111}, we have
\begin{align}\label{Le003.3111117}
&\frac{d}{dt}\int_{\Omega}(u+1)\ln(u+1)+\int_{\Omega}\frac{|\nabla u|^2}{u+1}+\mu\int_{\Omega}u^2\ln(u+1)\nonumber\\
\leq& M\int_{\Omega}u^2dx+\frac{1}{4M}\int_{\Omega}|\Delta v|^2dx+C_3\int_{\Omega}u^2+a\int_{\Omega}u\ln(u+1)+C_4
\end{align}
for all $t\in(0,T_{max})$ and $C_i>0$ $(i=3,4)$.
This claim immediately the desired results in Lemma \ref{Le003.13}.
\end{proof}

Applying standard testing procedures to the second equation in system \eqref{origin} again, we can establish the main inequality for $N=2$.
\begin{lemma}\label{Le003.1000300}
Let $N=2$. Then there exists a positive constant $C$ such that
\begin{align}\label{Le003.3111118}
&\frac{d}{dt}\int_{\Omega}|\nabla v|^2+\frac{1}{4}\int_{\Omega}|\Delta v|^2+\int_{\Omega}|\nabla v|^2\leq C\int_{\Omega}u^{2}+C\quad\mbox{for all}~~t\in(0,T_{max}).
\end{align}
\end{lemma}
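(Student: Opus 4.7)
The plan is to test the second equation in \eqref{origin} against $-\Delta v$, which is the natural choice to produce the dissipation terms $\int_{\Omega}|\Delta v|^2$ and $\int_{\Omega}|\nabla v|^2$ appearing on the left-hand side of \eqref{Le003.3111118}. After integrating by parts (the LHS becomes $\tfrac12\tfrac{d}{dt}\int_{\Omega}|\nabla v|^2$, and $\int_{\Omega}v\,\Delta v = -\int_{\Omega}|\nabla v|^2$), and using the third equation of \eqref{origin} in the form $\nabla\cdot(v\nabla w)=\nabla v\cdot\nabla w+v(w-u)$, I obtain
\begin{equation*}
\tfrac12\tfrac{d}{dt}\int_{\Omega}|\nabla v|^{2}+\int_{\Omega}|\Delta v|^{2}+\int_{\Omega}|\nabla v|^{2}
=-\int_{\Omega}(\nabla v\cdot\nabla w)\Delta v-\int_{\Omega}vw\,\Delta v+\int_{\Omega}vu\,\Delta v-\int_{\Omega}u\,\Delta v.
\end{equation*}
All four terms on the right must be absorbed into $\varepsilon\int_{\Omega}|\Delta v|^{2}+C\int_{\Omega}u^{2}+C$.

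The three ``easy'' terms ($-\int vw\Delta v$, $\int vu\Delta v$, $-\int u\Delta v$) are handled by Young's inequality together with $\|v(\cdot,t)\|_{L^{\infty}(\Omega)}\le C$ from Lemma \ref{Le3.3010} and $\|w(\cdot,t)\|_{L^{2}(\Omega)}\le C$ from Lemma \ref{Le030.2} (which in the case $N=2$ gives $\|w\|_{L^{q}}\le C$ for every $q\in[1,\infty)$); each produces at most $\varepsilon\int_{\Omega}|\Delta v|^{2}+C\int_{\Omega}u^{2}+C$.

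The main obstacle is the convection-type term $-\int_{\Omega}(\nabla v\cdot\nabla w)\Delta v$. By Cauchy--Schwarz and H\"older it is bounded by
$\varepsilon\int_{\Omega}|\Delta v|^{2}+C_{\varepsilon}\,\|\nabla v\|_{L^{4}(\Omega)}^{2}\|\nabla w\|_{L^{4}(\Omega)}^{2}$.
To treat the $L^{4}$-norms I would use the two-dimensional Gagliardo--Nirenberg inequality from Lemma \ref{GN}, namely $\|\nabla v\|_{L^{4}}\le C\|D^{2}v\|_{L^{2}}^{1/2}\|v\|_{L^{\infty}}^{1/2}+C\|v\|_{L^{\infty}}$ and the analogous estimate for $w$, combined with the Neumann elliptic regularity bounds $\|D^{2}v\|_{L^{2}}\le C(\|\Delta v\|_{L^{2}}+\|v\|_{L^{2}})$ and $\|D^{2}w\|_{L^{2}}\le C(\|\Delta w\|_{L^{2}}+\|w\|_{L^{2}})\le C(\|u\|_{L^{2}}+1)$ (using $\Delta w=w-u$). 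This yields $\|\nabla v\|_{L^{4}}^{2}\le C(\|\Delta v\|_{L^{2}}+1)$ and $\|\nabla w\|_{L^{4}}^{2}\le C(\|u\|_{L^{2}}+1)$, and a final application of Young's inequality produces
\begin{equation*}
C_{\varepsilon}\,\|\nabla v\|_{L^{4}}^{2}\|\nabla w\|_{L^{4}}^{2}\le \varepsilon\int_{\Omega}|\Delta v|^{2}+C\int_{\Omega}u^{2}+C.
\end{equation*}

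Choosing $\varepsilon$ sufficiently small so that all four contributions of $\varepsilon\int_{\Omega}|\Delta v|^{2}$ together absorb into the $\int_{\Omega}|\Delta v|^{2}$ on the left and leave at least $\tfrac18\int_{\Omega}|\Delta v|^{2}$ (hence $\tfrac14$ after multiplying through by $2$), I arrive at the stated estimate. The harder step is the $\nabla v\cdot\nabla w$ term, where the crucial two-dimensional feature is that $\|v\|_{L^{\infty}}$ is already known to be bounded and $\|w\|_{L^{q}}$ is bounded for every $q<\infty$, so that the product $\|\nabla v\|_{L^{4}}^{2}\|\nabla w\|_{L^{4}}^{2}$ is controlled by $\|\Delta v\|_{L^{2}}\cdot\|u\|_{L^{2}}$ up to lower order terms, which is exactly what can be split by Young into the allowed right-hand side of \eqref{Le003.3111118}.
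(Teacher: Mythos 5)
Your overall strategy (test against $-\Delta v$, isolate the cross term $-\int_{\Omega}(\nabla v\cdot\nabla w)\Delta v$, then split by H\"older and Young) is genuinely different from the paper's. The paper integrates by parts once more in that term, rewriting it as $\int_{\Omega}\partial_k v\,\partial_{ki}w\,\partial_i v-\tfrac12\int_{\Omega}|\nabla v|^2\Delta w$, so that only $D^2w$ and $\Delta w$ (never $\nabla w$) appear; these are then controlled in $L^2$ by $\|u\|_{L^2}$ through $\|w\|_{W^{2,2}}\le C(\|u\|_{L^2}+1)$, while $\|\nabla v\|_{L^4}^4$ is traded for $\|\Delta v\|_{L^2}^2$ using the already established bound on $\|v\|_{L^\infty}$.

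There is a gap in your treatment of $\|\nabla w\|_{L^4}$. The Gagliardo--Nirenberg inequality you invoke for $w$, $\|\nabla w\|_{L^4}\le C\|D^2 w\|_{L^2}^{1/2}\|w\|_{L^\infty}^{1/2}+C\|w\|_{L^\infty}$, requires a uniform bound on $\|w\|_{L^\infty}$. At this stage that bound is not available: Lemma~\ref{Le030.2} gives $\|w\|_{L^q}\le C$ only for finite $q$ when $N=2$, and the uniform $L^\infty$-bound on $w$ is established only afterwards, in Lemma~\ref{Le003.1000301}, which itself relies on the present lemma. Replacing $L^\infty$ by $L^q$ with $q<\infty$ raises the Gagliardo--Nirenberg exponent on $\|D^2w\|_{L^2}$ strictly above $1/2$, so one only gets $\|\nabla w\|_{L^4}^2\le C(\|u\|_{L^2}^{1+\delta}+1)$ with some $\delta>0$; feeding this into $\|\nabla v\|_{L^4}^2\|\nabla w\|_{L^4}^2$ and applying Young against $\varepsilon\|\Delta v\|_{L^2}^2$ leaves a term of order $\|u\|_{L^2}^{2+2\delta}$, which is not controlled by $C\int_{\Omega}u^2+C$. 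The conclusion $\|\nabla w\|_{L^4}^2\le C(\|u\|_{L^2}+1)$ that you need is in fact true, but it requires a different argument: use the elliptic estimate $\|w\|_{W^{2,4/3}(\Omega)}\le C\|u\|_{L^{4/3}(\Omega)}$, the critical embedding $W^{2,4/3}(\Omega)\hookrightarrow W^{1,4}(\Omega)$ in dimension $N=2$, and the interpolation $\|u\|_{L^{4/3}(\Omega)}^2\le\|u\|_{L^1(\Omega)}\|u\|_{L^2(\Omega)}\le C\|u\|_{L^2(\Omega)}$ (using Lemma~\ref{Le3.1}); with this substitution your argument closes. Alternatively, one can simply follow the paper's integration-by-parts route and avoid $\|\nabla w\|_{L^4}$ altogether.
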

\begin{proof}
We multiply the second equation in \eqref{origin} by $-\Delta v$ and integrate by parts over $\Omega$
\begin{align}\label{Le003.3111119}
&\frac{d}{dt}\int_{\Omega}|\nabla v|^2+\int_{\Omega}|\Delta v|^2\nonumber\\
=&\int_{\Omega}(\nabla v\cdot\nabla w+v\Delta w)(-\Delta v)-\int_{\Omega}|\nabla v|^2+\int_{\Omega}u(-\Delta v)\\
=&\int_{\Omega}(-\Delta v)(\nabla v\cdot\nabla w)-\int_{\Omega}v\Delta w\cdot\Delta v-\int_{\Omega}|\nabla v|^2+\int_{\Omega}u(-\Delta v)\nonumber \quad\mbox{for all}~~t\in(0,T_{max}).
\end{align}
For the  first term on the right hand side of \eqref{Le003.3111119}, using the Young's inequality again, for any $0<\varepsilon_1<1$, there are positive constants $C_1$ and $C_2$ such that
\begin{align}\label{Le003.3111120}
&\int_{\Omega}(-\Delta v)(\nabla v\cdot\nabla w)dx\nonumber\\
=&-\int_{\Omega}\partial_iw\partial_iv\partial_{kk}vdx\nonumber\\
=&-\int_{\Omega}\partial_iw\partial_ivd\partial_kvdx\nonumber\\
=&\int_{\Omega}\partial_kv(\partial_{ki}w\partial_iv+\partial_iw\partial_{ki}v)dx\nonumber\\
=&\int_{\Omega}\partial_kv\partial_{ki}w\partial_ivdx+\int_{\Omega}\partial_kv\partial_iwd\partial_kv\nonumber\\
=&\int_{\Omega}\partial_kv\partial_{ki}w\partial_ivdx+\frac{1}{2}\int_{\Omega}\partial_iwd|\nabla v|^2\nonumber\\
=&\int_{\Omega}\partial_kv\partial_{ki}w\partial_ivdx-\frac{1}{2}\int_{\Omega}|\nabla v|^2\cdot\Delta wdx\nonumber\\
\leq&\varepsilon_1\int_{\Omega}|\nabla v|^4dx+C_{1}\int_{\Omega}|\Delta w|^2dx+\varepsilon_1\int_{\Omega}|\nabla v|^4dx+C_2\int_{\Omega}|\Delta w|^2dx
\end{align}
for all $t\in(0,T_{max})$. We can infer from the Gagliardo-Nirenberg interpolation inequality and Lemma \ref{Le3.3010}, there exist constants $C_3>0$ and $C_4>0$ such that
\begin{align}\label{Le003.3111121}
2\varepsilon_1\int_{\Omega}|\nabla v|^4\leq& 2\varepsilon_1C_3\big(\parallel\Delta v\parallel^2_{L^2(\Omega)}\parallel v\parallel^2_{L^\infty(\Omega)}+\parallel v\parallel^4_{L^\infty(\Omega)}\big)\nonumber\\
\leq&2\varepsilon_1C_4\big(\parallel\Delta v\parallel^2_{L^2(\Omega)}+1\big)\quad\mbox{for all}~~t\in(0,T_{max}).
\end{align}
Let $2\varepsilon_1C_4=\frac{1}{4}$, we get
\begin{align}\label{Le003.31111219999123}
2\varepsilon_1\int_{\Omega}|\nabla v|^4\leq& 2\varepsilon_1C_3\big(\parallel\Delta v\parallel^2_{L^2(\Omega)}\parallel v\parallel^2_{L^\infty(\Omega)}+\parallel v\parallel^4_{L^\infty(\Omega)}\big)\nonumber\\
\leq&\frac{1}{4}\big(\parallel\Delta v\parallel^2_{L^2(\Omega)}+1\big)\quad\mbox{for all}~~t\in(0,T_{max}).
\end{align}
 Recalling $-\Delta w+w=u$,  let us pick $C_5>0$ fulfilling
\begin{align}\label{Le003.3111122}
(C_{1}+C_2)\parallel w\parallel^2_{W^{2,2}(\Omega)}\leq C_5(\parallel u\parallel^2_{L^2(\Omega)}+1)\quad\mbox{for all}~~t\in(0,T_{max}).
\end{align}
Combining \eqref{Le003.3111120}, \eqref{Le003.31111219999123} with \eqref{Le003.3111122}, we can infer the existence of positive constant $C_6>0$ fulfilling
\begin{align}\label{Le003.3111123}
\int_{\Omega}(-\Delta v)(\nabla v\cdot\nabla w)dx\leq\frac{1}{4}\int_{\Omega}|\Delta v|^2+C_5\int_{\Omega}u^2+C_6\quad\mbox{for all}~~t\in(0,T_{max}).
\end{align}
Similarly, for the second term on the right hand side of \eqref{Le003.3111119}, by using Lemma \ref{Le3.3010}, the $L^{p}$ theory of elliptic equation and Young's inequality, we have
\begin{align}\label{Le003.3111124}
-\int_{\Omega}v\Delta w\cdot\Delta v&\leq \frac{1}{4}\int_{\Omega}|\Delta v|^2+\int_{\Omega}v^2|\Delta w|^2\nonumber\\
&\leq \frac{1}{4}\int_{\Omega}|\Delta v|^2+\parallel v\parallel^2_{L^{\infty}(\Omega)}\parallel \Delta w\parallel^2_{L^{2}(\Omega)}\nonumber\\
&\leq \frac{1}{4}\int_{\Omega}|\Delta v|^2+C_7\int_{\Omega}|\Delta w|^2\nonumber\\
&\leq \frac{1}{4}\int_{\Omega}|\Delta v|^2+C_8\int_{\Omega}u^2 \quad\mbox{for all}~~t\in(0,T_{max})
\end{align}
with $C_7>0,C_8>0$ are positive constants.

Next, regarding the final term presented on the right-hand side of \eqref{Le003.3111119}, due to the Young's inequality, then
\begin{align}\label{Le003.3111125}
\int_{\Omega}u(-\Delta v)&\leq \frac{1}{4}\int_{\Omega}|\Delta v|^2+\int_{\Omega}u^2dx\quad\mbox{for all}~~t\in(0,T_{max}).
\end{align}
Finally, inserting \eqref{Le003.3111123}-\eqref{Le003.3111125} into \eqref{Le003.3111119}, the proof of Lemma \ref{Le003.1000300} is completed.
\end{proof}

We also need to get the following estimates.
\begin{lemma}\label{Le003.1000301}
Let $N=2$, $\alpha=2$, for any $\mu>0$. There exists a positive constant $C$ such that
\begin{align}\label{Le003.3111127}
\int_{\Omega}(u(\cdot,t)+1)\ln(u(\cdot,t)+1)\leq C\quad\mbox{for all}~~t\in(0,T_{max})
\end{align}
and
\begin{align}\label{Le003.3111128}
\int_{\Omega}|\Delta v(\cdot,t)|^2\leq C\quad\mbox{for all}~~t\in(0,T_{max})
\end{align}
as well as
\begin{align}\label{Le003.311112008}
\parallel w(\cdot,t)\parallel_{L^{\infty}(\Omega)}\leq C\quad\mbox{for all}~~t\in(0,T_{max}).
\end{align}

\end{lemma}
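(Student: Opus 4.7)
My plan is to combine Lemma \ref{Le003.13} and Lemma \ref{Le003.1000300} into a single dissipative Lyapunov-type inequality. Concretely, I would multiply the estimate of Lemma \ref{Le003.1000300} by a constant $\kappa>0$ chosen large enough so that $\frac{\kappa}{4}\int_\Omega|\Delta v|^2$ on the left dominates the $\frac{1}{4M}\int_\Omega|\Delta v|^2$ appearing on the right of Lemma \ref{Le003.13} (for example $\kappa>1/M$), and add. Setting
\[
F(t):=\int_\Omega (u+1)\ln(u+1)+\kappa\int_\Omega|\nabla v|^2,
\]
the sum takes, schematically, the form
\[
F'(t)+\int_\Omega\frac{|\nabla u|^2}{u+1}+\mu\int_\Omega u^2\ln(u+1)+c_1\int_\Omega|\Delta v|^2+\kappa\int_\Omega|\nabla v|^2\leq C_1\int_\Omega u^2+a\int_\Omega u\ln(u+1)+C_2.
\]
Splitting the domain at $\{u<A\}\cup\{u\geq A\}$ for $A$ sufficiently large yields the elementary comparisons $u^2\leq\varepsilon u^2\ln(u+1)+C(\varepsilon)$ and $u\ln(u+1)\leq\varepsilon u^2\ln(u+1)+C(\varepsilon)$; with $\varepsilon$ small the two right-hand-side terms are absorbed into $\mu\int u^2\ln(u+1)$. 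Since $(u+1)\ln(u+1)\leq 2u^2\ln(u+1)+C$, the dissipation also controls $F$ up to a constant, so the inequality reduces to $F'(t)+\delta F(t)\leq C_3$ for some $\delta>0$, and a standard ODE comparison (Lemma \ref{Le2}) delivers $F(t)\leq C$ uniformly on $(0,T_{max})$. This is exactly \eqref{Le003.3111127}, and in passing it also gives the pointwise bound $\int_\Omega|\nabla v|^2\leq C$.

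For the pointwise estimate \eqref{Le003.3111128}, time-integrating the above dissipative inequality and using that $F$ is bounded produces the integrated controls $\int_t^{t+\tau}\int_\Omega u^2\ln(u+1)\,ds\leq C$ and $\int_t^{t+\tau}\int_\Omega|\Delta v|^2\,ds\leq C$ on $(\tau,T_{max})$. Writing Lemma \ref{Le003.1000300} with $y(t):=\int_\Omega|\nabla v|^2$ as $y'(t)+\frac14\int_\Omega|\Delta v|^2+y(t)\leq h(t)$ where $\int_t^{t+\tau}h\leq C$, and noting that $\int_\Omega|\nabla v|^2=-\int_\Omega v\Delta v\leq\|v\|_{L^\infty}|\Omega|^{1/2}\bigl(\int_\Omega|\Delta v|^2\bigr)^{1/2}$ together with Lemma \ref{Le3.3010} yields the reverse comparison $\int_\Omega|\Delta v|^2\geq c\,y^2$, the ODI becomes $y'+cy^2\leq h$, to which Lemma \ref{Le4} applies to give $y(t)\leq C$ pointwise. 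To upgrade to pointwise $\int_\Omega|\Delta v|^2\leq C$, I would apply the parabolic maximal regularity result Lemma \ref{Le7} with $\gamma=2$ on the shifted interval $[t-\tau,t]$: the source $g=\nabla\cdot(v\nabla w)+u$ of the $v$-equation lies in $L^2$ uniformly in time thanks to the already established pointwise bound on $\|\nabla v\|_{L^2}$, the integrated bound on $\|u\|_{L^2}$, and the $L^\infty$ bound on $w$ (which is obtained concurrently), and Lemma \ref{Le7} then transfers these into the required pointwise estimate on $\|\Delta v(\cdot,t)\|_{L^2}$. The main technical hurdle is precisely this pointwise upgrade: extracting a true $L^\infty$-in-$t$ bound from what the Lyapunov inequality alone provides as an integrated-in-$t$ bound.

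Finally, for \eqref{Le003.311112008}, once a pointwise $L^p$-bound on $u$ with some $p>1$ has been secured, the elliptic identity $-\Delta w+w=u$ yields $\|w\|_{W^{2,p}(\Omega)}\leq C\|u\|_{L^p(\Omega)}$, and the Sobolev embedding $W^{2,p}(\Omega)\hookrightarrow L^\infty(\Omega)$, valid in dimension $N=2$ for any $p>1$, completes the argument. The required pointwise $L^p$-bound on $u$ can be extracted from the $L\log L$-bound of \eqref{Le003.3111127} together with the dissipation $\int_\Omega|\nabla u|^2/(u+1)$: applying the two-dimensional Gagliardo--Nirenberg inequality to $f=\sqrt{u+1}$ gives $\int_\Omega(u+1)^2\leq C\int_\Omega|\nabla u|^2/(u+1)$, and a further Lemma \ref{Le4}-type argument converts the associated integrated control into the pointwise $L^2$-bound on $u$; alternatively, Br\'ezis--Merle-type reasoning for the Poisson equation in two dimensions directly produces an $L^\infty$-estimate for $w$ from the $L\log L$-control on $u$.
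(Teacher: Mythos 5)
Your combination of Lemma \ref{Le003.13} and Lemma \ref{Le003.1000300} into a Lyapunov inequality, absorption of the low-order terms into $\mu\int_\Omega u^2\ln(u+1)$, and linear ODE comparison matches the paper's proof of \eqref{Le003.3111127} exactly (the paper fixes $M=2$ and adds with unit weight rather than a scaling factor $\kappa$, a cosmetic difference). For \eqref{Le003.311112008}, your ``alternative'' Br\'{e}zis--Merle/Moser--Trudinger-type route from the $L\log L$-control of $u$ and the elliptic identity $-\Delta w+w=u$ is what the paper actually does, citing Tao--Winkler (2014). Your primary plan (secure a pointwise $L^p$-bound on $u$ first and then use elliptic regularity) is a detour the paper avoids and is not quite in hand: the Gagliardo--Nirenberg step you sketch bounds $\int_\Omega u^2$ by the \emph{instantaneous} dissipation $\int_\Omega|\nabla u|^2/(u+1)$, which the Lyapunov inequality only controls in time average, and the ``further Lemma \ref{Le4}-type argument'' would require a differential inequality for $\int_\Omega u^2$ that you never derive.

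For \eqref{Le003.3111128} your argument has a genuine gap (and, to be fair, the paper's own proof of this lemma does not establish that bound either: the ODE comparison there controls $\int_\Omega|\nabla v|^2$ pointwise and $\int_\Omega|\Delta v|^2$ only in time average, then moves straight to $w$). The specific problem is the use of Lemma \ref{Le7}: it is a maximal $L^\gamma$-regularity result bounding the \emph{weighted time-integral} $\int_{s_0}^T e^{\gamma s}\|\Delta v(\cdot,s)\|_{L^\gamma(\Omega)}^\gamma\,ds$, not $\sup_t\|\Delta v(\cdot,t)\|_{L^2(\Omega)}$. Applying it on $[t-\tau,t]$ only recovers the integrated control on $\|\Delta v\|_{L^2}^2$ that the Lyapunov dissipation already delivers; it does not upgrade to a pointwise-in-time bound. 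Obtaining a true pointwise bound on $\|\Delta v(\cdot,t)\|_{L^2(\Omega)}$ would require either $L^\infty$-in-time maximal regularity or an energy estimate for $\int_\Omega|\Delta v|^2$ obtained by testing the $v$-equation against $\Delta^2 v$, and either route needs $u$ bounded in $L^\infty((0,T_{max});L^2(\Omega))$ and $\nabla v$ bounded in $L^\infty((0,T_{max});L^q(\Omega))$ for suitable $q$; in this paper those are only established afterwards (Lemmas \ref{Le3.13} and \ref{Le3.007707}), so invoking them here would be circular.
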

\begin{proof}
According to Lemma \ref{Le003.1000300} and Lemma \ref{Le003.13}. Let $M=2$ and $C_i>0$ $ (i=1,2,3,4)$, then we have
\begin{align}\label{Le003.3111129}
&\frac{d}{dt}\int_{\Omega}(u+1)\ln(u+1)+\int_{\Omega}\frac{|\nabla u|^2}{u+1}+\mu\int_{\Omega}u^2\ln(u+1)\nonumber\\
\leq&M\int_{\Omega}u^2+\frac{1}{4M}\int_{\Omega}|\Delta v|^2+C_1\int_{\Omega}u^2+a\int_{\Omega}u\ln(u+1)+C_2\\
=&2\int_{\Omega}u^2+\frac{1}{8}\int_{\Omega}|\Delta v|^2+C_1\int_{\Omega}u^2+a\int_{\Omega}u\ln(u+1)+C_2\nonumber \quad\mbox{for all}~~t\in(0,T_{max})
\end{align}
and
\begin{align}\label{Le003.3111130}
&\frac{d}{dt}\int_{\Omega}|\nabla v|^2+\frac{1}{4}\int_{\Omega}|\Delta v|^2+\int_{\Omega}|\nabla v|^2\leq C_3\int_{\Omega}u^{2}+C_4\quad\mbox{for all}~~t\in(0,T_{max}).
\end{align}
Now \eqref{Le003.3111129}+\eqref{Le003.3111130}, one can find constants $C_5>0$ and $C_6>0$ such that
\begin{align}\label{Le003.3111131}
&\frac{d}{dt}\int_{\Omega}(u+1)\ln(u+1)+\int_{\Omega}\frac{|\nabla u|^2}{u+1}+\frac{d}{dt}\int_{\Omega}|\nabla v|^2\nonumber\\
&+\mu\int_{\Omega}u^2\ln(u+1)+\frac{1}{8}\int_{\Omega}|\Delta v|^2+\int_{\Omega}|\nabla v|^2\nonumber\\
\leq& C_5 \int_{\Omega}u^2+a\int_{\Omega}u\ln(u+1)+C_6\quad\mbox{for all}~~t\in(0,T_{max}).
\end{align}
 Then adding $\int_{\Omega}(u+1)\ln(u+1)$ at the both sides of \eqref{Le003.3111131}, we have
\begin{align}\label{Le003.3111132}
&\frac{d}{dt}\int_{\Omega}(u+1)\ln(u+1)+\int_{\Omega}\frac{|\nabla u|^2}{u+1}+\frac{d}{dt}\int_{\Omega}|\nabla v|^2\nonumber\\
&+\mu\int_{\Omega}u^2\ln(u+1)+\frac{1}{8}\int_{\Omega}|\Delta v|^2+\int_{\Omega}|\nabla v|^2+\int_{\Omega}(u+1)\ln(u+1)\nonumber\\
\leq& C_5 \int_{\Omega}u^2+a\int_{\Omega}u\ln(u+1)+\int_{\Omega}(u+1)\ln(u+1)+C_6\quad\mbox{for all}~~t\in(0,T_{max}).
\end{align}
Since
\begin{equation*}
\lim_{u\rightarrow\infty} \frac{-\mu u^2\ln(u+1)}{C_5 u^2+au\ln(u+1)+(u+1)\ln(u+1)} = -\infty,
\end{equation*}
which implies that there is a positive constant $C_7$ filling
\begin{equation*}
-\mu\int_{\Omega}u^2\ln(u+1)+ C_5 \int_{\Omega}u^2+a\int_{\Omega}u\ln(u+1)+\int_{\Omega}(u+1)\ln(u+1)\leq C_7
\end{equation*}
for all $t\in(0,T_{max})$. Therefore, \eqref{Le003.3111132} can be written as
\begin{align}\label{Le003.3111134}
\frac{d}{dt}\int_{\Omega}(u+1)\ln(u+1)+\frac{d}{dt}\int_{\Omega}|\nabla v|^2+\int_{\Omega}(u+1)\ln(u+1)+\int_{\Omega}|\nabla v|^2\leq C_7
\end{align}
for all $t\in(0,T_{max})$. Let
$$y(t)=\int_{\Omega}(u+1)\ln(u+1)+\int_{\Omega}|\nabla v|^2~~\quad\mbox{for all}~~t\in(0,T_{max}),$$
 which together with \eqref{Le003.3111134} can yield that
\begin{equation*}
y'(t)+y(t)\leq C_7\quad\mbox{for all}~~t\in(0,T_{max}).
\end{equation*}
As a consequence of an ODE comparison, we infer that
\begin{equation*}
y(t)\leq \max\left\{\int_{\Omega}(u_0+1)\ln(u_0+1)+\int_{\Omega}|\nabla v_0|^2, C_7\right\} \quad\mbox{for all}~~t\in(0,T_{max}).
\end{equation*}
Moreover, there exists a constant $C_8>0$ such that
\begin{equation*}
\int_{\Omega}(u+1)\ln(u+1)\leq C_8\quad\mbox{for all}~~t\in(0,T_{max}).
\end{equation*}
Thanks to the result in \cite{Tao Winkler 2014 JDE}, there exists a constant $C_9>0$ such that
\begin{align}\label{Le003.3111132909}
\parallel w\parallel_{L^{\infty}(\Omega)}\leq C_9\quad\mbox{for all}~~t\in(0,T_{max}).
\end{align}
So, the proof of Lemma \ref{Le003.1000301} is finished.
\end{proof}

In following subsection, when $N=2$, $\alpha=2$, for any $\mu>0$, we will provide an arbitrary $L^{p}$-$L^{q}$ estimates of $u$.
\begin{lemma}\label{Le3.13}
Let $N=2$, $\alpha=2$, for any $\mu>0$. Without any restriction on the index $\xi$, then there exists a positive constant $C>0$ such that
\begin{equation}\label{Le3.311}
\parallel u(\cdot,t)\parallel_{L^{p}(\Omega)}\leq C \quad\mbox{for all}~~t\in(0,T_{max}).
\end{equation}
\end{lemma}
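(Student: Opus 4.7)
\textbf{Proof proposal for Lemma \ref{Le3.13}.}

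My plan is to carry out a standard $L^p$ energy argument: test the first equation of \eqref{origin} with $u^{p-1}$ and absorb every right-hand side term into either the gradient dissipation or the logistic dissipation, exploiting the preparatory bounds already in force from Lemma \ref{Le003.1000301}. Concretely, after integration by parts I obtain
\begin{align*}
\frac{1}{p}\frac{d}{dt}\int_\Omega u^p+(p-1)\int_\Omega u^{p-2}|\nabla u|^2
=-\frac{p-1}{p}\int_\Omega u^p\Delta v+\frac{\xi(p-1)}{p}\int_\Omega u^p\Delta w+a\int_\Omega u^p-\mu\int_\Omega u^{p+1},
\end{align*}
and replace $\Delta w$ by $w-u$ using the third equation of \eqref{origin}. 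The input bounds I will lean on are: $\|v\|_{L^\infty}\le C$ (Lemma \ref{Le3.3010}), $\|\Delta v\|_{L^2}\le C$ and $\|w\|_{L^\infty}\le C$ (Lemma \ref{Le003.1000301}), and the mass bound $\|u\|_{L^1}\le C$ (from Lemma \ref{Le3.1}). The two bounds on $v$ yield $\|v\|_{W^{2,2}}\le C$, whence the 2D embedding $W^{2,2}\hookrightarrow W^{1,q}$ gives $\|\nabla v\|_{L^q}\le C(q)$ for every $q<\infty$.

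I then intend to control the three bad terms as follows. For the chemotaxis term, Hölder's inequality and a Gagliardo-Nirenberg inequality (Lemma \ref{GN}) in the two-dimensional setting give
\begin{align*}
\int_\Omega u^p|\Delta v|
\le\bigl\|u^{p/2}\bigr\|_{L^4}^2\,\|\Delta v\|_{L^2}
\le C\bigl\|\nabla u^{p/2}\bigr\|_{L^2}^{(2p-1)/p}\|u\|_{L^1}^{1/2}\|\Delta v\|_{L^2}+C,
\end{align*}
and because $(2p-1)/p<2$, Young's inequality absorbs this into the dissipation $\frac{4(p-1)}{p^2}\bigl\|\nabla u^{p/2}\bigr\|_{L^2}^2=(p-1)\int u^{p-2}|\nabla u|^2$, up to an additive constant depending on $\|\Delta v\|_{L^2}$ and $\|u\|_{L^1}$. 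The term $\frac{\xi(p-1)}{p}\int u^p w$ coming from $\Delta w=w-u$ is immediately dominated by $C\int u^p$ via $\|w\|_{L^\infty}\le C$, which in turn is controlled by $\varepsilon\int u^{p+1}+C_\varepsilon$ through Young's inequality. The term $a\int u^p$ is handled in the same way.

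The decisive step is the remaining contribution $-\bigl(\mu+\tfrac{\xi(p-1)}{p}\bigr)\int u^{p+1}$: when $\xi<0$ is so large in magnitude that $\mu+\tfrac{\xi(p-1)}{p}<0$, this becomes a positive obstruction instead of a useful dissipation. I will overcome this by appealing to the following two-dimensional Gagliardo-Nirenberg inequality (taking the seed exponent $s=2/p$ so that $\bigl\|u^{p/2}\bigr\|_{L^{2/p}}=\|u\|_{L^1}^{p/2}$),
\begin{align*}
\int_\Omega u^{p+1}=\bigl\|u^{p/2}\bigr\|_{L^{2(p+1)/p}}^{2(p+1)/p}
\le C_p\bigl\|\nabla u^{p/2}\bigr\|_{L^2}^2\|u\|_{L^1},
\end{align*}
which is sharp precisely because the exponent on $\bigl\|\nabla u^{p/2}\bigr\|_{L^2}$ is exactly $2$ in dimension two. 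Combining this with the already established $\|u\|_{L^1}\le C$ converts the obstruction into a multiple of $\bigl\|\nabla u^{p/2}\bigr\|_{L^2}^2$, which I absorb into the gradient dissipation $(p-1)\int u^{p-2}|\nabla u|^2$ after choosing the various Young parameters small enough.

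Putting everything together, I arrive at a differential inequality of the form $\frac{d}{dt}\int u^p+\int u^p\le C$, to which Lemma \ref{Le2} applies and yields $\|u(\cdot,t)\|_{L^p(\Omega)}\le C$. The main obstacle is clearly the arbitrary sign and size of $\xi$: the gradient-based Gagliardo-Nirenberg bound on $\int u^{p+1}$ together with the mass bound is the mechanism that decouples the convection term from any restriction on $\xi$, and it is specific to the two-dimensional setting where the exponent of $\bigl\|\nabla u^{p/2}\bigr\|_{L^2}$ in the inequality is $2$ rather than something larger.
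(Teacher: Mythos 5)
Your general strategy — testing with $u^{p-1}$, keeping the gradient dissipation, and using the $L^2$-bound on $\Delta v$ and $L^\infty$-bound on $w$ from Lemma \ref{Le003.1000301} — is a genuinely simpler and more elementary route than the paper's, which discards the dissipation, applies the variation-of-constants formula, and invokes maximal parabolic regularity (Lemma \ref{Le7}) together with the optimization in Lemma \ref{Le20}. Your treatment of $\int_\Omega u^p|\Delta v|$ via H\"older, two-dimensional Gagliardo–Nirenberg with exponent $\frac{2p-1}{p}<2$, and Young is correct. The gap is in what you call the decisive step. Since the Gagliardo–Nirenberg bound $\int_\Omega u^{p+1}\le C_p\|\nabla u^{p/2}\|_{L^2}^2\|u\|_{L^1}$ has exponent \emph{exactly} $2$ on the gradient, there is no Young parameter left to tune: after applying it, the bad term contributes $\bigl(\tfrac{|\xi|(p-1)}{p}-\mu\bigr)C_p\|u\|_{L^1}\,\|\nabla u^{p/2}\|_{L^2}^2$, a coefficient fixed by the data, while the available dissipation is $\tfrac{4(p-1)}{p^2}\|\nabla u^{p/2}\|_{L^2}^2$. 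For $|\xi|$ large (and $p$ bounded away from $1$) the former exceeds the latter, and the absorption fails; you would have to impose a smallness condition on $|\xi|\,\|u\|_{L^1}$, contradicting the hypothesis.

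The repair is easier than the trick you proposed, and it is the mechanism the paper relies on (through the parabolic-regularity route): the lemma is only used downstream (Lemmas \ref{Le3.007707}, \ref{Le3.9997}) to provide $L^{p_0}$-boundedness for \emph{some} $p_0>\tfrac{N}{2}=1$. Choose $p>1$ so close to $1$ that $\tfrac{|\xi|(p-1)}{p}<\mu$ (possible for any fixed $\xi$, $\mu$); then the combined coefficient of $\int_\Omega u^{p+1}$ is strictly negative and that term can simply be discarded — no Gagliardo–Nirenberg needed at all. With that substitution, your differential inequality closes: convert $\|\nabla u^{p/2}\|_{L^2}^2$ into $\int_\Omega u^p$ by another Gagliardo–Nirenberg with subcritical exponent (using $\|u\|_{L^1}\le C$) and apply the ODE comparison Lemma \ref{Le2}. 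In the paper the vanishing of $\tfrac{|\xi|(p-1)}{p}$ as $p\to1^+$ appears in the computation $\lim_{p\to1}h(p)<0$, which is exactly what makes the argument $\xi$-independent there as well.
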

\begin{proof}
Testing the first equation in \eqref{origin} by $u^{p-1}$ and noting that $\Delta w=w-u$, we get
\begin{align}\label{Le0040}
&\frac{1}{p}\frac{d}{dt}\int_{\Omega}u^{p}+(p-1)\int_{\Omega}u^{p-2}|\nabla u|^{2}\nonumber\\
=&-\int_{\Omega}\nabla \cdot (u\nabla v)u^{p-1} +\frac{\xi(p-1)}{p}\int_{\Omega}u^{p}\Delta w+ a\int_{\Omega}u^{p}-\mu\int_{\Omega}u^{p+1}\nonumber\\
=&-\frac{(p-1)}{p}\int_{\Omega}u^{p}\Delta v +\frac{\xi(p-1)}{p}\int_{\Omega}u^{p}(w-u)+ a\int_{\Omega}u^{p}-\mu\int_{\Omega}u^{p+1}\nonumber\\
\leq&\frac{(p-1)}{p}\int_{\Omega}u^{p}|\Delta v| +\frac{|\xi|(p-1)}{p}\int_{\Omega}u^{p}w+\frac{|\xi|(p-1)}{p}\int_{\Omega}u^{p+1} + a\int_{\Omega}u^{p}-\mu\int_{\Omega}u^{p+1}
\end{align}
for all $t\in(0,T_{max})$. That is,
\begin{align}\label{Le0041}
&\frac{1}{p}\frac{d}{dt}\int_{\Omega}u^{p}+\frac{p+1}{p}\int_{\Omega}u^{p}\nonumber\\
\leq&\frac{(p-1)}{p}\int_{\Omega}u^{p}|\Delta v| +\frac{|\xi|(p-1)}{p}\int_{\Omega}u^{p}w+\frac{|\xi|(p-1)}{p}\int_{\Omega}u^{p+1}\nonumber\\
&+\frac{p+1}{p}\int_{\Omega}u^{p} +a\int_{\Omega}u^{p}-\mu\int_{\Omega}u^{p+1}\quad\mbox{for all}~~t\in(0,T_{max}).
\end{align}
Then we can invoke the Young's inequality and choose $\varepsilon_1$ appropriately small, then it follows that
\begin{align}\label{Le0042}
\frac{p+1}{p}\int_{\Omega}u^{p}+ a\int_{\Omega}u^{p}-\mu\int_{\Omega}u^{p+1}
\leq(\varepsilon_1-\mu)\int_{\Omega}u^{p+1}+C_1(\varepsilon_1,p)
\end{align}
for all $t\in(0,T_{max})$ and where $$C_1(\varepsilon_1,p)=\frac{1}{p+1}\left(\varepsilon_1\frac{p+1}{p}\right)^{-p}\left(a+\frac{p-1}{p}\right)^{p+1}|\Omega|.$$
Let
\begin{equation*}
A_1=\frac{1}{p+1}\left[\frac{p+1}{p}\right]^{-p}\left(\frac{p-1}{p}\right)^{p+1}=\frac{(p-1)^{p+1}}{p(p+1)^{p+1}}
\end{equation*}
and
\begin{align}\label{Le00042}
\lambda_0:=(A_1C_{13}p)^{\frac{1}{p+1}}.
\end{align}
In view of the first term on the right hand in \eqref{Le0041}, we have
\begin{align}\label{Le0043}
&\frac{(p-1)}{p}\int_{\Omega}u^{p}|\Delta v|\nonumber\\
\leq&\lambda_0\int_{\Omega}u^{p+1}+\frac{1}{p+1}\left[\lambda_0\frac{p+1}{p}\right]^{-p}\left(\frac{p-1}{p}\right)^{p+1}\int_{\Omega}
|\Delta v|^{p+1}\nonumber\\
=&\lambda_0\int_{\Omega}u^{p+1}+A_1\lambda_{0}^{-p}\int_{\Omega}
|\Delta v|^{p+1}\quad\mbox{for all}~~t\in(0,T_{max}).
\end{align}
According the estimate \eqref{Le003.3111132909}, there is a constant $\widetilde{C}>0$ such that
\begin{align}\label{Le010243}
\parallel w\parallel_{L^{\infty}(\Omega)}\leq \widetilde{C}\quad\mbox{for all}~~t\in(0,T_{max}).
\end{align}
An application of the Young's inequality and estimate \eqref{Le010243}. By taking $\varepsilon_2$ appropriately small, one may derive the existence of constants $C_2>0$ and $C_3>0$ such that
\begin{align}\label{Le0044}
&\frac{|\xi|(p-1)}{p}\int_{\Omega}u^{p}w\nonumber\\
\leq& \varepsilon_2\int_{\Omega}u^{p+1}+C_{2}\int_{\Omega}w^{p+1}\nonumber\\
\leq&  \varepsilon_2\int_{\Omega}u^{p+1}+C_{3} \quad\mbox{for all}~~t\in(0,T_{max}).
\end{align}
Inserting \eqref{Le0042}, \eqref{Le0043} and \eqref{Le0044} into \eqref{Le0041} can derive that
\begin{align}\label{Le0045}
\frac{1}{p}\frac{d}{dt}\int_{\Omega}u^{p}\leq&(\varepsilon_1+\varepsilon_2+\lambda_0+\frac{|\xi|(p-1)}{p}-\mu)\int_{\Omega}u^{p+1}
+A_1\lambda_{0}^{-p}
\int_{\Omega}|\Delta v|^{p+1}\nonumber\\
&+C_1(\varepsilon_1,p)+C_{3}\quad\mbox{for all}~~t\in(0,T_{max}).
\end{align}
For any $t\in(s_0,T_{max})$ with $s_0$ is the same as in Lemma \ref{Le71}. Employing the variation-of-constants formula to the above inequality, we obtain
\begin{align}\label{Le0046}
& \frac{1}{p}\parallel u(t)\parallel^{p}_{L^p(\Omega)}\nonumber\\
\leq&\frac{1}{p}e^{-(t-s)(p+1)}\parallel u(s_0)\parallel^{p}_{L^p(\Omega)}+
(\varepsilon_1+\varepsilon_2+\lambda_0+\frac{|\xi|(p-1)}{p}-\mu){\int^{t}_{s_0}e^{-(t-s)(p+1)}\int_\Omega} u^{p+1}\nonumber\\
&+A_1\lambda_{0}^{-p}\int^{t}_{s_0}e^{-(t-s)(p+1)}\int_{\Omega}|\Delta v|^{p+1}+(C_1(\varepsilon_1,p)+C_3)\int^{t}_{s_0}e^{-(t-s)(p+1)}\nonumber\\
\leq&(\varepsilon_1+\varepsilon_2+\lambda_0+\frac{|\xi|(p-1)}{p}-\mu){\int^{t}_{s_0}e^{-(t-s)(p+1)}\int_\Omega} u^{p+1}\\
&+A_1\lambda_{0}^{-p}\int^{t}_{s_0}e^{-(t-s)(p+1)}\int_{\Omega}|\Delta v|^{p+1}+C_{4}\nonumber\quad\mbox{for all}~~t\in(0,T_{max}),
\end{align}
where
$$C_4:=\frac{1}{p}e^{-(t-s)(p+1)}\parallel u(s_0)\parallel^{p}_{L^p(\Omega)}+(C_1(\varepsilon_1,p)+C_3)\int^{t}_{s_0}e^{-(t-s)(p+1)}.$$
 Applying Lemma \ref{Le7} can obtain that
\begin{align}\label{Le0047}
&A_1\lambda_{0}^{-p}\int^{t}_{s_0}e^{-(t-s)(p+1)}\int_{\Omega}|\Delta v|^{p+1}\nonumber\\
=&A_1\lambda_{0}^{-p}e^{-(p+1)t}\int^{t}_{s_0}e^{(p+1)s}\int_{\Omega}|\Delta v|^{p+1}\nonumber\\
\leq&A_1\lambda_{0}^{-p}e^{-(p+1)t}C_{p+1}[\int^{t}_{s_0}e^{(p+1)s}\parallel\nabla\cdot(v\nabla w)+u\parallel^{p+1}_{L^{p+1}(\Omega)}\nonumber\\
&+e^{(p+1)s_0}(\parallel v(\cdot,s_0)\parallel^{p+1}_{L^{p+1}(\Omega)}+\parallel \Delta v(\cdot,s_0)\parallel^{p+1}_{L^{p+1}(\Omega)})]\quad\mbox{for all}~~t\in(s_0,T_{max})
\end{align}
with $C_{p+1}>0$. Since
\begin{align}\label{Le0048}
&\parallel\nabla\cdot(v\nabla w)+u\parallel^{p+1}_{L^{p+1}(\Omega)}\nonumber\\
&=\parallel \nabla v\cdot\nabla w+v \Delta w+u\parallel^{p+1}_{L^{p+1}(\Omega)}\nonumber\\
&\leq\parallel v \Delta w\parallel^{p+1}_{L^{p+1}(\Omega)}+\parallel\nabla v\cdot\nabla w\parallel^{p+1}_{L^{p+1}(\Omega)}+\parallel u\parallel^{p+1}_{L^{p+1}(\Omega)}\quad\mbox{for all}~~t\in(0,T_{max}).
\end{align}
For the first term on the right hand side of \eqref{Le0048}, in light of the $L^{p}$ theory of elliptic equation and Lemma \ref{Le3.3010}, we derive that there is $C_5>0$ such that
\begin{align}\label{Le0049}
\parallel v \Delta w\parallel^{p+1}_{L^{p+1}(\Omega)}&\leq\parallel v\parallel^{p+1}_{L^{\infty}(\Omega)}\parallel \Delta w\parallel^{p+1}_{L^{p+1}(\Omega)}\nonumber\\
&\leq C_5\parallel  u\parallel^{p+1}_{L^{p+1}(\Omega)}\quad\mbox{for all}~~t\in(0,T_{max}).
\end{align}
Moreover, for the second term on the right hand side of \eqref{Le0048}, for any $0<\varepsilon<1$, we have
\begin{align}\label{Le00491}
\parallel\nabla v\cdot\nabla w\parallel^{p+1}_{L^{p+1}(\Omega)}&\leq \frac{1}{4\varepsilon}\int_{\Omega} |\nabla w|^{2p+2}+ \varepsilon\int_{\Omega} |\nabla v|^{2p+2}\quad\mbox{for all}~~t\in(0,T_{max}).
\end{align}
The Gagliardo-Nirenberg inequality and Lemma \ref{Le3.3010} implies that
\begin{align}\label{Le00492}
\varepsilon\int_{\Omega} |\nabla v|^{2p+2}&\leq C_{6}\varepsilon \big(\parallel \Delta v\parallel^{p+1}_{L^{p+1}(\Omega)}\parallel v\parallel^{p+1}_{L^{\infty}(\Omega)}+\parallel v\parallel^{2p+2}_{L^{\infty}(\Omega)}\big)\nonumber\\
&\leq C_{6}\varepsilon \int_{\Omega}|\Delta v|^{p+1}+C_7\quad\mbox{for all}~~t\in(0,T_{max})
\end{align}
with $C_7>0$. Let $C_{p+1}C_{6}\varepsilon=\frac{1}{2}$, which implies that $\varepsilon=\frac{1}{2C_{p+1}C_{6}}$. Using the Gagliardo-Nirenberg inequality and $L^{p}$ theory of elliptic equation  as well as \eqref{Le003.311112008}, there are some constants $C_i>0$ $(i=8,9,10,11)$ such that
\begin{align}\label{Le00493}
\frac{1}{4\varepsilon}\parallel \nabla w\parallel^{2p+2}_{L^{2p+2}(\Omega)}
\leq& C_8\left(\parallel \Delta w\parallel^{p+1}_{L^{p+1}(\Omega)}\parallel  w\parallel^{p+1}_{L^{\infty}(\Omega)}+\parallel w\parallel^{2p+2}_{L^{\infty}(\Omega)}\right)\nonumber\\
\leq&C_9\left(\parallel \Delta w\parallel^{p+1}_{L^{p+1}(\Omega)}+1\right)\nonumber\\
\leq&C_{10}\int_{\Omega}u^{p+1}+C_{11}\quad\mbox{for all}~~t\in(0,T_{max}),
\end{align}
which implies that
\begin{align}\label{Le00494}
&A_1\lambda_{0}^{-p}\int^{t}_{s_0}e^{-(t-s)(p+1)}\int_{\Omega}|\Delta v|^{p+1}\nonumber\\
\leq&A_1\lambda_{0}^{-p}e^{-(p+1)t}C_{p+1}\int^{t}_{s_0}e^{(p+1)s}\parallel\nabla\cdot(v\nabla w)+u\parallel^{p+1}_{L^{p+1}(\Omega)}+C_{12}\nonumber\\
\leq&A_1\lambda_{0}^{-p}e^{-(p+1)t}C_{p+1}\int^{t}_{s_0}e^{(p+1)s}\parallel\nabla v\cdot\nabla w\parallel^{p+1}_{L^{p+1}(\Omega)}\nonumber\\
&+A_1\lambda_{0}^{-p}e^{-(p+1)t}C_{p+1}\int^{t}_{s_0}e^{(p+1)s}\parallel v\Delta w\parallel^{p+1}_{L^{p+1}(\Omega)}\nonumber\\
&+A_1\lambda_{0}^{-p}e^{-(p+1)t}C_{p+1}\int^{t}_{s_0}e^{(p+1)s}\int_{\Omega} u^{p+1}+C_{12}\nonumber\\
\leq&\frac{1}{2}A_1\lambda_{0}^{-p}e^{-(p+1)t}\int^{t}_{s_0}e^{(p+1)s}\int_{\Omega}|\Delta v|^{p+1}\nonumber\\
&+A_1\lambda_{0}^{-p}e^{-(p+1)t}C_{p+1}\int^{t}_{s_0}e^{(p+1)s}C_{10}\int_{\Omega} u^{p+1}\nonumber\\
&+A_1\lambda_{0}^{-p}e^{-(p+1)t}C_{p+1}\int^{t}_{s_0}e^{(p+1)s}C_5 \int_{\Omega}u^{p+1}\nonumber\\
&+A_1\lambda_{0}^{-p}e^{-(p+1)t}C_{p+1}\int^{t}_{s_0}e^{(p+1)s}\int_{\Omega} u^{p+1}+C_{12}\nonumber\\
&+A_1\lambda_{0}^{-p}e^{-(p+1)t}C_{p+1}\int^{t}_{s_0}e^{(p+1)s}(C_7+C_{11})\quad\mbox{for all}~~t\in(s_0,T_{max})
\end{align}
with $C_{12}>0$. That is, there are some constants $C_{13}>0$ and $C_{14}>0$ such that
\begin{align}\label{Le00495}
&A_1\lambda_{0}^{-p}\int^{t}_{s_0}e^{-(t-s)(p+1)}\int_{\Omega}|\Delta v|^{p+1}\nonumber\\
\leq&A_1\lambda_{0}^{-p}e^{-(p+1)t}C_{13}\int^{t}_{s_0}e^{(p+1)s}\int_{\Omega} u^{p+1}+C_{14}\quad\mbox{for all}~~t\in(s_0,T_{max}).
\end{align}
Combining \eqref{Le0046} with \eqref{Le00495}, we can obtain that there is a constant $C_{15}>0$ such that
\begin{align}\label{Le00496}
\frac{1}{p}\parallel u(t)\parallel^{p}_{L^p(\Omega)}
\leq&(\varepsilon_1+\varepsilon_2+\lambda_0+\frac{|\xi|(p-1)}{p}-\mu)\int^{t}_{s_0}e^{-(t-s)(p+1)}\int_{\Omega} u^{p+1}\nonumber\\
&+A_1\lambda_{0}^{-p}e^{-(p+1)t}C_{13}\int^{t}_{s_0}e^{(p+1)s}\int_{\Omega} u^{p+1}+C_{15}\\
=&(\varepsilon_1+\varepsilon_2+\lambda_0+\frac{|\xi|(p-1)}{p}-\mu+A_1\lambda_{0}^{-p}C_{13})\int^{t}_{s_0}e^{-(t-s)(p+1)}\int_{\Omega} u^{p+1}+C_{15}.\nonumber
\end{align}
In view of \eqref{Le00042}, we have $$\lambda_{0}=(pA_1C_{13})^{\frac{1}{p+1}}.$$
Now, in light of Lemma \ref{Le20}, let
\begin{align*}
g(p)=&\varepsilon_1+\varepsilon_2+\lambda_0+\frac{|\xi|(p-1)}{p}-\mu+A_1\lambda_{0}^{-p}C_{13}\nonumber\\
=&\varepsilon_1+\varepsilon_2+\frac{|\xi|(p-1)}{p}+\frac{p-1}{p}C^{\frac{1}{p+1}}_{13}-\mu\nonumber\\
\end{align*}
and
\begin{equation*}
h(p)=\frac{|\xi|(p-1)}{p}+\frac{p-1}{p}C^{\frac{1}{p+1}}_{13}-\mu,
\end{equation*}
then
\begin{equation*}
\lim_{p\rightarrow1}h(p)<0.
\end{equation*}
So, there exists $p_0>\frac{N}{2}=1$ and  $1<p<p_0$ such that $h(p_0)<0$. Choosing $\varepsilon_1+\varepsilon_2$ arbitrarily small such that
$$\varepsilon_1+\varepsilon_2+\frac{|\xi|(p-1)}{p}+\frac{p_0-1}{p_0}C^{\frac{1}{p+1}}_{13}-\mu<0.$$
Then there exists a positive constant $C_{16}$ such that
\begin{align}\label{Le00498}
\int_{\Omega} u^{p_0}\leq C_{16} \quad\mbox{for all}~~t\in(0,T_{max}),
\end{align}
which means the result of Lemma \ref{Le3.13} is hold.
\end{proof}

Now, we can use the standard elliptic regularity theory and the standard semigroup arguments to obtain the uniform bound of $v$.
\begin{lemma}\label{Le3.007707}
If $N=2$, $\alpha=2$, for any $\mu>0$ and $N=3$, $\alpha>2$, for any $\mu>0$ or $\alpha=2$,  $\mu>\max\{2|\xi|,2C_3A\}$, where $C_3$ and $A$ is the same as Lemma \ref{Le3.30112}. Let $N<q<\frac{Np}{N-p}$, there exists a positive constant $C$ such that
\begin{align}\label{99999999999}
\parallel \nabla v(\cdot,t) \parallel_{L^{q}(\Omega)} \leq C \quad\mbox{for all}~~t\in(0,T_{max}).
\end{align}
\end{lemma}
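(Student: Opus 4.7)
The plan is to apply the variation-of-constants representation for the second equation of \eqref{origin} together with the smoothing estimates of the Neumann heat semigroup from Lemma \ref{semigroup}, inserting the previously obtained $L^{\infty}$-bound of $v$ (Lemma \ref{Le3.3010}), the $L^{p}$-bound of $u$ (Lemma \ref{Le3.30112} for $N=3$ and Lemma \ref{Le3.13} for $N=2$) and the control of $w$ and $\nabla w$ furnished by Lemma \ref{Le3.113} (for $N=3$) and Lemma \ref{Le003.1000301} combined with elliptic regularity (for $N=2$).

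Writing $\tilde v:=v$ and using $e^{t(\Delta-1)}$ for the Neumann heat semigroup with linear damping, the representation
\begin{equation*}
v(\cdot,t)=e^{t(\Delta-1)}v_{0}+\int_{0}^{t}e^{(t-s)(\Delta-1)}\bigl[\nabla\cdot\bigl(v(\cdot,s)\nabla w(\cdot,s)\bigr)+u(\cdot,s)\bigr]ds
\end{equation*}
holds on $(0,T_{max})$. Applying the gradient and taking $L^{q}(\Omega)$ norms, I would split
\begin{equation*}
\|\nabla v(\cdot,t)\|_{L^{q}(\Omega)}\le \|\nabla e^{t(\Delta-1)}v_{0}\|_{L^{q}(\Omega)}+I_{1}(t)+I_{2}(t),
\end{equation*}
where $I_{1}$ collects the contribution of the divergence term $\nabla\cdot(v\nabla w)$ and $I_{2}$ the one of $u$. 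For $I_{2}$ a direct application of Lemma \ref{semigroup} with exponents $p$ and $q$ yields a bound of the form $C\int_{0}^{t}\bigl(1+(t-s)^{-\frac12-\frac{N}{2}(\frac{1}{p}-\frac{1}{q})}\bigr)e^{-\lambda(t-s)}\|u(\cdot,s)\|_{L^{p}(\Omega)}ds$, which is finite exactly because $q<\frac{Np}{N-p}$ forces $\frac12+\frac{N}{2}(\frac{1}{p}-\frac{1}{q})<1$, and then the already established uniform $L^{p}$ bound on $u$ closes this term.

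For $I_{1}$ I plan to use the product rule $\nabla\cdot(v\nabla w)=\nabla v\cdot\nabla w+v\Delta w=\nabla v\cdot\nabla w+v(w-u)$ combined with the divergence form smoothing estimate of the semigroup, namely $\|\nabla e^{t\Delta}\nabla\cdot\varphi\|_{L^{q}(\Omega)}\le C(1+t^{-1-\frac{N}{2}(\frac{1}{r}-\frac{1}{q})})e^{-\lambda t}\|\varphi\|_{L^{r}(\Omega)}$ applied to $\varphi=v\nabla w$, which avoids introducing an extra derivative on $v$. Selecting $r$ just above $N$ (strictly less than the exponent $4$ available for $\|\nabla w\|_{L^{r}}$ in the three-dimensional case from Lemma \ref{Le3.113}, and available from elliptic regularity $\|w\|_{W^{2,p_{0}}}\hookrightarrow W^{1,r}$ in the two-dimensional case once $p_{0}$ from Lemma \ref{Le3.13} is fixed), the boundedness of $v$ in $L^{\infty}$ allows me to estimate $\|v\nabla w\|_{L^{r}}\le\|v\|_{L^{\infty}}\|\nabla w\|_{L^{r}}\le C$. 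The remaining time exponent $1+\frac{N}{2}(\frac{1}{r}-\frac{1}{q})$ is $<1$ whenever $r$ is chosen slightly larger than $\frac{Nq}{N+q}$, which is compatible with $r>N$ precisely in the range $q<\frac{Np}{N-p}$ imposed in the statement (since $p>\frac{N}{2}$ from the above lemmas on $u$).

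The delicate step I expect to wrestle with is matching the two constraints on the exponents simultaneously: the summability of the kernel in $I_{2}$ requires $q<\frac{Np}{N-p}$ with $p$ coming from the $L^{p}$-bound on $u$, while the summability of the kernel in $I_{1}$ requires an auxiliary exponent $r$ with $\frac{Nq}{N+q}<r$ and $\|\nabla w\|_{L^{r}}$ bounded by what Lemma \ref{Le3.113} or elliptic regularity from $\|u\|_{L^{p_{0}}}$ really provides. Once both time integrals are shown to be finite, combining the three estimates with the fact that $\|\nabla e^{t(\Delta-1)}v_{0}\|_{L^{q}}\le C\|v_{0}\|_{W^{1,\infty}}e^{-\lambda t}$ yields a uniform bound on $\|\nabla v(\cdot,t)\|_{L^{q}(\Omega)}$ on $(0,T_{max})$, completing the proof.
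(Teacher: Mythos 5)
Your plan goes wrong at the point where you try to keep the divergence $\nabla\cdot(v\nabla w)$ together and invoke a ``double--derivative'' smoothing estimate of the form $\|\nabla e^{t\Delta}\nabla\cdot\varphi\|_{L^{q}(\Omega)}\le C(1+t^{-1-\frac{N}{2}(\frac{1}{r}-\frac{1}{q})})e^{-\lambda t}\|\varphi\|_{L^{r}(\Omega)}$. Such an estimate is only available for $1<r\le q\le\infty$, in which case the time exponent $1+\frac{N}{2}(\frac{1}{r}-\frac{1}{q})$ is $\ge 1$ and the kernel $(t-s)^{-1-\frac{N}{2}(\frac{1}{r}-\frac{1}{q})}$ is never integrable near $s=t$, so $I_{1}$ cannot be closed this way. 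Choosing $r>q$ does not rescue the argument: the semigroup does not gain decay when the input space is $L^{r}$ with $r>q$ (on a bounded domain one only has the trivial $L^{r}\hookrightarrow L^{q}$, and the singularity remains $t^{-1}$). In addition, your stated sufficient condition $r>\frac{Nq}{N+q}$ is incorrect even on its own terms; for the time exponent to drop below $1$ you would need $r>q$, which, as just explained, is precisely the regime where the estimate is not available.

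The paper avoids this obstruction by expanding $\nabla\cdot(v\nabla w)=\nabla v\cdot\nabla w+v\Delta w$ before applying the semigroup, so that only the \emph{first}-derivative smoothing estimate of Lemma \ref{semigroup} is needed, with exponent $\frac{1}{2}+\frac{N}{2}(\frac{1}{\widetilde{p}_{0}}-\frac{1}{q})<1$ whenever $\widetilde{p}_{0}>\frac{N}{2}$ and $q<\frac{N\widetilde{p}_{0}}{N-\widetilde{p}_{0}}$. The price for this is that the term $\nabla v\cdot\nabla w$ reintroduces the quantity one wants to bound; the paper pays this price by the H\"older split $\|\nabla v\cdot\nabla w\|_{L^{\widetilde{p}_0}}\le\|\nabla w\|_{L^{s}}\|\nabla v\|_{L^{s\widetilde{p}_0/(s-\widetilde{p}_0)}}$, a Gagliardo--Nirenberg interpolation of $\|\nabla v\|_{L^{s\widetilde{p}_0/(s-\widetilde{p}_0)}}$ between $L^{q}$ and $L^{1}$, and a closing inequality $M(T)\le C_{11}+C_{12}M^{\xi_{6}}(T)$ with $\xi_{6}\in(0,1)$ for $M(T)=\sup_{t\le T}\|\nabla v(\cdot,t)\|_{L^{q}(\Omega)}$. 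That sublinear fixed-point step is the key idea absent from your proposal and cannot be dispensed with in this semigroup framework; without it, the self-referential term is not controlled, and with the divergence kept unsplit the kernel is not integrable.
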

\begin{proof}
According to Lemma \ref{Le3.13}, we choose $\widetilde{p_0}$ satisfies $\widetilde{p_0}=p_0$ for $N=2$ and $\widetilde{p_0}=2$ for $N=3$, which guarantees that $\widetilde{p_0}>\frac{N}{2}$.
Then there exists a $C_1>0$ such that
\begin{align}\label{Le00499}
\int_{\Omega} u^{\widetilde{p}_0}\leq C_{1}\quad\mbox{for all}~~t\in(0,T_{max}).
\end{align}
In view of the second equation in \eqref{origin},
\begin{equation}
v=e^{t(\Delta-1)}v_0+\int^{t}_{0}e^{(t-s)(\Delta-1)}\left(\nabla v\cdot\nabla w+v \Delta w+u\right).
\end{equation}
That is,
\begin{align}\label{Le00500001}
\parallel v\parallel_{W^{1,q}}
\leq& C_2+\int^{t}_{t_0}(1+(t-s)^{-\frac{1}{2}-\frac{N}{2}(\frac{1}{\widetilde{p}_0}-\frac{1}{q})})(\parallel \nabla v\cdot\nabla w\parallel_{L^{\widetilde{p}_0}(\Omega)}\nonumber\\
&+\parallel v \Delta w\parallel_{L^{\widetilde{p}_0}(\Omega)}+\parallel u \parallel_{L^{\widetilde{p}_0}(\Omega)})\quad\mbox{for all}~~t\in(0,T_{max})
\end{align}
with some certain positive constant $C_2$. Due to {\eqref{Le00499}}, the H\"{o}lder inequality, Lemma \ref{Le3.3010} and $L^{p}$ theory of elliptic equation, there exist  positive constants $C_i>0$ $(i=3,4)$, we have
\begin{align}\label{Le00500002}
\parallel v\Delta w\parallel_{L^{\widetilde{p}_0}(\Omega)}
\leq& \parallel v\parallel_{L^{\infty}(\Omega)}\parallel \Delta w\parallel_{L^{\widetilde{p}_0}(\Omega)}\nonumber\\
\leq&C_{3}\parallel \Delta w\parallel_{L^{\widetilde{p}_0}(\Omega)}\nonumber\\
\leq&C_{3}(\parallel u\parallel_{L^{\widetilde{p}_0}(\Omega)}+1)\nonumber\\
\leq&C_{4}\quad\mbox{for all}~~t\in(0,T_{max}).
\end{align}
 Using the $L^{p}$ theory of elliptic equation and $\widetilde{p}_0>\frac{N}{2}$, we can get $C_5>0$ and $C_6>0$ such that
\begin{align}\label{Le00500004}
\parallel w\parallel_{W^{1,s}}\leq\parallel w\parallel_{W^{2,\widetilde{p}_0}}\leq C_5\parallel u\parallel_{L^{\widetilde{p}_0}}\leq C_6\quad\mbox{for all}~~t\in(0,T_{max}),
\end{align}
where $N<s<\frac{\widetilde{p}_0N}{N-\widetilde{p}_0}$.
Using the H\"{o}lder inequality again to obtain that
\begin{align}\label{Le00500003}
&\left(\int_{\Omega}|\nabla v\cdot\nabla w|^{\widetilde{p}_0} dx\right)^\frac{1}{\widetilde{p}_0}\nonumber\\
\leq& \left(\int_{\Omega} |\nabla v|^{\widetilde{p}_0\theta'}dx\right)^{\frac{1}{\widetilde{p}_0\theta'}}\left(\int_{\Omega} |\nabla w|^{\widetilde{p}_0\theta}dx\right)^{\frac{1}{\widetilde{p}_0\theta}}\nonumber\\
\leq&C_6\left(\int_{\Omega} |\nabla v|^{\widetilde{p}_0\theta'}dx\right)^{\frac{1}{\widetilde{p}_0\theta'}}\quad\mbox{for all}~~t\in(0,T_{max}),
\end{align}
where $\theta=\frac{s}{\widetilde{p}_0}$ and $\theta'=\frac{s}{s-\widetilde{p}_0}$. Then we can get
\begin{align}\label{Le00500005}
&\left(\int_{\Omega}|\nabla v\cdot\nabla w|^{\widetilde{p}_0}dx\right)^\frac{1}{\widetilde{p}_0}\nonumber\\
\leq&C_6\left(\int_{\Omega} |\nabla v|^{\frac{s\widetilde{p}_0}{s-\widetilde{p}_0}}dx\right)^{\frac{s-\widetilde{p}_0}{s\widetilde{p}_0}}\nonumber\\
=&C_6\parallel \nabla v\parallel_{L^{\frac{s\widetilde{p}_0}{s-\widetilde{p}_0}}(\Omega)}\quad\mbox{for all}~~t\in(0,T_{max}).
\end{align}
Let $M(T):=\max\limits_{0\leq t\leq T_{max}}\parallel \nabla v\parallel_{L^{q}(\Omega)}$, for some $C_7>0$, which implies that
\begin{align}\label{Le00500006}
M(T) \leq C_2+\int^{t}_{t_0}(1+(t-s)^{-\frac{1}{2}-\frac{N}{2}(\frac{1}{\widetilde{p}_0}-\frac{1}{q})})(C_6\parallel \nabla v\parallel_{L^{\frac{s\widetilde{p}_0}{s-\widetilde{p}_0}}(\Omega)}+C_7)ds.
\end{align}
In view of $\widetilde{p}_0>\frac{N}{2}$ and $N<s<\frac{\widetilde{p}_0N}{N-\widetilde{p}_0}$, we have $\frac{\widetilde{p}_0s}{s-\widetilde{p}_0}<q$. Then by the Gagliardo-Nirenberg interpolation inequality, there exist constants $C_8>0$ and $C_9>0$ such that
\begin{align}\label{Le00500007}
C_6\parallel \nabla v\parallel_{L^{\frac{s\widetilde{p}_0}{s-\widetilde{p}_0}}(\Omega)}\leq& C_8(\parallel \nabla v\parallel^{\xi_6}_{L^{q}(\Omega)}\parallel \nabla v\parallel^{1-\xi_6}_{L^{1}(\Omega)}+\parallel \nabla v\parallel_{L^{1}(\Omega)})\nonumber\\
\leq& C_9(\parallel \nabla v\parallel^{\xi_6}_{L^{q}(\Omega)}+1)\quad\mbox{for all}~~t\in(0,T_{max}),
\end{align}
where $\xi_6=\frac{1+\frac{1}{N}-\frac{s-\widetilde{p}_0}{s\widetilde{p}_0}}{1+\frac{1}{N}-\frac{1}{q}}\in(0,1)$. Then, combining \eqref{Le00500007} with \eqref{Le00500006} can obtain that, there are positive constants $C_i>0$ $(i=10,11,12)$ such that
\begin{align*}
M(T) \leq C_2+\int^{t}_{t_0}(1+(t-s)^{-\frac{1}{2}-\frac{N}{2}(\frac{1}{\widetilde{p}_0}-\frac{1}{q})})(\parallel \nabla v\parallel^{\xi_6}_{L^{q}(\Omega)}+C_{10})ds
\leq C_{11}+C_{12}M^{\xi_6}(T).
\end{align*}
Since $\xi_6\in(0,1)$, we further have
\begin{align}\label{Le00500009}
M(T) \leq \max\left\{2C_{11}, (2C_{12})^{\frac{1}{1-\xi_6}}\right\}.
\end{align}
This completes the proof of this Lemma.
\end{proof}

In view of Lemma \ref{Le3.30112}, we can achieve the boundedness of $u$ in $L^{\infty}$ uniformly in $(0, T_{max})$.
\begin{lemma}\label{Le3.9997}
If $N=2$, $\alpha=2$, for any $\mu>0$ and $N=3$, $\alpha>2$, for any $\mu>0$ or $\alpha=2$, $\mu>\max\{2|\xi|,2C_3A\}$, where $C_3$ and $A$ is the same as Lemma \ref{Le3.30112}. Then there exists a positive constant $C$ such that
\begin{equation}
\parallel u(\cdot,t) \parallel_{L^{\infty}(\Omega)} \leq C \quad\mbox{for all}~~t\in(0,T_{max}).
\end{equation}
\end{lemma}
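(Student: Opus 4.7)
The plan is to mirror Lemma~\ref{Le3.7} with the $L^\infty$ bounds on $\nabla v$ and $\nabla w$ replaced by the weaker $L^q$ bounds that are available in the present lower-dimensional setting. Setting $t_0:=(t-1)_+$, $h:=-u\nabla v+\xi u\nabla w$ and $f(u):=au-\mu u^{\alpha}$, the variation-of-constants formula
\begin{equation*}
u(\cdot,t)=e^{(t-t_0)\Delta}u(\cdot,t_0)-\int_{t_0}^{t}e^{(t-s)\Delta}\nabla\!\cdot\!h(\cdot,s)\,ds+\int_{t_0}^{t}e^{(t-s)\Delta}f(u(\cdot,s))\,ds
\end{equation*}
reduces the problem to bounding each summand in $L^\infty(\Omega)$. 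The first summand is handled exactly as in \eqref{Le3.7005}: the maximum principle covers $t\in(0,1]$, while the $L^1\!\to\!L^\infty$ smoothing of the Neumann semigroup combined with Lemma~\ref{Le3.1} covers $t>1$. The third summand is controlled pointwise through $f(s)\leq \tfrac{a^{\alpha/(\alpha-1)}}{\mu^{1/(\alpha-1)}}\alpha^{-1/(\alpha-1)}(1-\tfrac{1}{\alpha})$ exactly as in \eqref{Le3.7007}.

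The real work lies in the drift term. The standard Neumann-semigroup estimate
\begin{equation*}
\|e^{(t-s)\Delta}\nabla\!\cdot\!h\|_{L^\infty(\Omega)}\leq C\bigl(1+(t-s)^{-\tfrac12-\tfrac{N}{2q}}\bigr)e^{-\lambda_1(t-s)}\|h\|_{L^q(\Omega)}
\end{equation*}
furnishes an integrable time-singularity provided $q>N$. I would choose Hölder exponents $q_1,q_2$ with $\tfrac{1}{q}=\tfrac{1}{q_1}+\tfrac{1}{q_2}$ and estimate
\begin{equation*}
\|h\|_{L^q(\Omega)}\leq \|u\|_{L^{q_1}(\Omega)}\|\nabla v\|_{L^{q_2}(\Omega)}+|\xi|\,\|u\|_{L^{q_1}(\Omega)}\|\nabla w\|_{L^{q_2}(\Omega)}.
\end{equation*}
In dimension two, Lemma~\ref{Le3.13} provides $\|u\|_{L^{q_1}(\Omega)}$ for any finite $q_1$; elliptic regularity on the third equation of \eqref{origin} then yields $\|\nabla w\|_{L^{q_2}(\Omega)}$ for any finite $q_2$; and Lemma~\ref{Le3.007707} supplies $\|\nabla v\|_{L^{q_2}(\Omega)}$ in the matching range, so any $q>2=N$ is attainable. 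In dimension three, Lemma~\ref{Le3.30112} yields $\|u\|_{L^2(\Omega)}$, Lemma~\ref{Le3.113} provides $\|\nabla w\|_{L^4(\Omega)}$, and Lemma~\ref{Le3.007707} delivers $\|\nabla v\|_{L^{q_2}(\Omega)}$ for some $q_2>3$; a short $L^p$-bootstrap on the first equation of \eqref{origin} along the lines of \eqref{Le0040}--\eqref{Le0041}, where the dissipative term $-\mu\int u^{p+\alpha-1}$ absorbs the cross terms via Young's inequality, then upgrades $\|u\|_{L^{q_1}(\Omega)}$ to a sufficiently large $q_1$ to secure $q>3$.

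The principal technical obstacle is the three-dimensional bootstrap: one must verify, using only the already-established gradient bounds on $v$ and $w$, that the absorbing term really dominates in the $L^p$-testing for some $p$ strictly larger than the base exponent~$2$, and that this argument is compatible with the borderline case $\alpha=2$ in which the assumption $\mu>\max\{2|\xi|,2C_3A\}$ is tight. Once the uniform $L^{q_1}$-bound on $u$ is in hand with $q_1$ large, assembling the three semigroup contributions exactly as in \eqref{Le3.7005}--\eqref{Le3.7007} produces $\|u(\cdot,t)\|_{L^\infty(\Omega)}\leq C$ uniformly in $t\in(0,T_{\max})$, completing the proof.
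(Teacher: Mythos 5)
Your decomposition of the drift term departs from the paper's and, as written, leaves a genuine gap. You set $h:=-u\nabla v+\xi u\nabla w$ and then try to bound $\|h\|_{L^q(\Omega)}$ for some $q>N$ by splitting $\|h\|_{L^q}\leq\|u\|_{L^{q_1}}\|\nabla v\|_{L^{q_2}}+|\xi|\,\|u\|_{L^{q_1}}\|\nabla w\|_{L^{q_2}}$, which requires a uniform $L^{q_1}$ bound on $u$ with $q_1>q>N$. That bound is not available from what precedes: in dimension two, Lemma~\ref{Le3.13} only establishes an $L^{p_0}$ bound for a single exponent $p_0>1$ (see the choice of $p_0>\tfrac{N}{2}=1$ with $h(p_0)<0$ culminating in \eqref{Le00498}), not for arbitrary finite exponents as your sketch assumes; and in dimension three, Lemma~\ref{Le3.30112} stops at $L^2$, so the ``short $L^p$-bootstrap'' you invoke would have to be built from scratch — you flag it yourself as the unresolved obstacle, and it is exactly where your argument stalls at the borderline $\alpha=2$, where the dissipation $-\mu\int u^{p+1}$ competing against the $+\frac{|\xi|(p-1)}{p}\int u^{p+1}$ coming from $\xi\nabla\cdot(u\nabla w)$ is already delicate at $p=2$.

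The paper avoids any such bootstrap. It sets $h:=-\nabla v+\xi\nabla w$ (no factor $u$), obtains $\|h\|_{L^q(\Omega)}\leq C$ for a single exponent $q>N$ directly from Lemma~\ref{Le3.007707} together with elliptic $W^{2,p_0}\hookrightarrow W^{1,q}$ regularity for $w$, and then applies H\"older in the form $\|u\,h\|_{L^p}\leq\|u\|_{L^{qp/(q-p)}}\|h\|_{L^q}$ with $N<p<q$. The decisive step is the interpolation $\|u\|_{L^{qp/(q-p)}}\leq\|u\|_{L^\infty}^{\theta}\|u\|_{L^1}^{1-\theta}$ with $\theta=\frac{pq-q+p}{pq}\in(0,1)$, which together with $\|u\|_{L^1}\leq C$ (Lemma~\ref{Le3.1}) turns the semigroup estimate into $M(T)\leq C_4+C_5+C\,M^{\theta}(T)$ for $M(T):=\sup_{t\leq T}\|u(\cdot,t)\|_{L^\infty(\Omega)}$, which closes precisely because $\theta<1$. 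No high-order $L^{q_1}$ control on $u$ is ever needed. You should replace your $L^p$-bootstrap and H\"older split by this Moser–Alikakos-type interpolation; the remainder of your proposal (maximum principle and $L^1$–$L^\infty$ smoothing for the initial segment, the pointwise bound $\sup_{s\geq0}(as-\mu s^\alpha)_+$ for the reaction term) agrees with the paper.
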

\begin{proof}
Recalling the first equation in \eqref{origin}, we have
$$u_t=\Delta u-\nabla\cdot(u\nabla v)+\xi\nabla\cdot(u\nabla w)+au-\mu u^{\alpha}.$$
 The variation-of-constants formula associated with the above equation can represent the solution $u$ as follows
 \begin{align}\label{Le3.76100}
u(\cdot,t)=&e^{(t-t_{0})\Delta}u(\cdot,t_{0})+\int^{t}_{t_{0}}e^{(t-s)\Delta}\left\{\xi\nabla\cdot(u\nabla w)-\nabla\cdot(u\nabla v)\right\}\nonumber\\
&+\int^{t}_{t_{0}}e^{(t-s)\Delta}(au-\mu u^{\alpha})\quad\mbox{for all}~~t\in(0,T_{max}).
\end{align}
For convenience, let
$$ h:=-\nabla v+\xi\nabla w, ~~f(u):=au-\mu u^{\alpha}.$$
Then, \eqref{Le3.76100} can be rewritten as
\begin{align}\label{Le3.76200}
u(\cdot,t)=e^{(t-t_{0})\Delta}u(\cdot,t_{0})+\int^{t}_{t_{0}}e^{(t-s)\Delta}\nabla\cdot\left[u(\cdot,s)h(\cdot,s)\right]ds
+\int^{t}_{t_{0}}e^{(t-s)\Delta}f(u(\cdot,s))ds
\end{align}
for all $t\in(t_0,T_{max})$, where $t_{0}: =(t-1)_{+}$.
Hence
\begin{align}\label{Le3.75}
\parallel u(\cdot,t)\parallel_{L^{\infty}(\Omega)}
\leq& \parallel e^{(t-t_{0})\Delta}u(\cdot,t_{0})\parallel_{L^{\infty}(\Omega)}\nonumber\\
&+\int^{t}_{t_{0}}\parallel e^{(t-s)\Delta}\nabla\cdot\left[u(\cdot,s)h(\cdot,s)\right]\parallel_{L^{\infty}(\Omega)}ds \nonumber\\ &+\int^{t}_{t_{0}}\parallel e^{(t-s)\Delta}f(u(\cdot,s))\parallel_{L^{\infty}(\Omega)}ds  \quad\mbox{for all}~~t\in(0,T_{max}).
\end{align}
Accordingly, if $t\in(0,1]$, by using the maximum principle, there is a constant $C_1>0$ such that
\begin{equation}
\parallel e^{(t-t_{0})\Delta}u_{0}\parallel_{L^{\infty}(\Omega)}\leq\parallel u_0\parallel_{L^{\infty}(\Omega)}\leq C_1\quad\mbox{for all}~~t\in(0,T_{max}).
\end{equation}
However, if $t>1$, by using an application of the Neumann heat semigroup $(e^{t\Delta})_{t>0}$ on $\Omega$ together with Lemma \ref{Le3.1}, we can yield that there are constants $C_2>0$ and $C_3>0$ such that
\begin{equation}
\parallel e^{(t-t_{0})\Delta}u(\cdot,t_{0})\parallel_{L^{\infty}(\Omega)}\leq C_2(t-t_0)^{-\frac{N}{2}}\parallel u(\cdot,t_0)\parallel_{L^{1}(\Omega)}
\leq C_3\quad\mbox{for all}~~t\in(0,T_{max}).
\end{equation}
In summary, we obtain $C_4>0$ fulfilling
\begin{equation}\label{3.755}
\parallel e^{(t-t_{0})\Delta}u(\cdot,t_{0})\parallel_{L^{\infty}(\Omega)}
\leq C_4\quad\mbox{for all}~~t\in(0,T_{max}).
\end{equation}
Similarly, we can estimate the third integral on the right of \eqref{Le3.75} as follows:
\begin{align}\label{Le3.77}
&\int^{t}_{t_{0}}\parallel e^{(t-s)\Delta}f(u(\cdot,s))\parallel_{L^{\infty}(\Omega)}ds\nonumber\\
\leq& \int^{t}_{t_{0}}sup\left(as-\mu s^{\alpha}\right)_{+}ds \nonumber\\
\leq&\int^{t}_{t_{0}}\frac{a^{\frac{\alpha}{\alpha-1}}_{+}}{\mu^{\frac{1}{\alpha-1}}}\alpha^{\frac{-1}{\alpha-1}}(1-\frac{1}{\alpha})ds
\nonumber\\
\leq& \frac{a^{\frac{\alpha}{\alpha-1}}_{+}}{\mu^{\frac{1}{\alpha-1}}}\alpha^{\frac{-1}{\alpha-1}}(1-\frac{1}{\alpha})\nonumber\\
\leq& C_5\quad\mbox{for all}~~t\in(0,T_{max}),
\end{align}
where we used the fact $0\leq t-t_0\leq1$ and $C_5>0$.
In view of \eqref{Le00499}, we get
\begin{align}\label{Le0050000010}
\int_{\Omega} u^{p_0}\leq C_{6}\quad\mbox{for all}~~t\in(0,T_{max})
\end{align}
with $C_{6}>0$. Now using the the $L^{p}$ theory of elliptic equation can find some constants $C_{7}>0$ and $C_{8}>0$ such that
\begin{align}\label{Le0050000011}
\parallel w\parallel_{W^{2,p_0}(\Omega)}\leq C_7\parallel u\parallel_{L^{p_0}(\Omega)}\leq C_8\quad\mbox{for all}~~t\in(0,T_{max}).
\end{align}
Since $W^{2,p_0}(\Omega)\hookrightarrow W^{1,q}(\Omega)$ for the case of $q>N$, then there exist constants $C_9>0$ and $C_{10}>0$ such that
\begin{align}\label{Le0050000012}
\parallel w\parallel_{W^{1,q}(\Omega)}\leq C_9\parallel w\parallel_{W^{2,p_0}(\Omega)}\leq C_{10}\quad\mbox{for all}~~t\in(0,T_{max}).
\end{align}
Subsequently, from Lemma \ref{Le3.007707} and \eqref{Le0050000012}, we can find a constant $C_{11}>0$ such that
\begin{align}\label{Le3.750}
\parallel h(\cdot,t) \parallel_{L^{q}(\Omega)}
=&\parallel-\nabla v+\xi\nabla w\parallel_{L^{q}(\Omega)}\nonumber\\
\leq&\parallel \nabla v \parallel_{L^{q}(\Omega)}+\parallel \xi\nabla w \parallel_{L^{q}(\Omega)}\nonumber\\
\leq &C_{11}  \quad\mbox{for all}~~t\in(0,T_{max}).
\end{align}
Now recalling Lemma \ref{Le3.1}, we can find $C_{12}>0$ satisfying
\begin{align}\label{Le3.751}
\parallel u(\cdot,t) \parallel_{L^{1}(\Omega)}\leq C_{12}  \quad\mbox{for all}~~t\in(0,T_{max}).
\end{align}
Using the smoothing properties of the Stokes semigroup {and the H\"{o}lder inequality}, if $N<p<q$, then there exist some constants $\lambda_1>0$ as well as $C_{13}>0$ such that
\begin{align}\label{3.752}
&\int^{t}_{t_{0}}\parallel e^{(t-s)\Delta}\nabla\cdot[u(\cdot,s)h(\cdot,s)]\parallel_{L^{\infty}(\Omega)}ds\nonumber\\
\leq& C_{13}\int^{t}_{t_{0}}\left[1+(t-s)^{-\frac{1}{2}-\frac{N}{2}\times\frac{1}{p}}\right]e^{-\lambda_1(t-s)}
\parallel u(\cdot,s)h(\cdot,s)\parallel_{L^{p}(\Omega)} ds\nonumber\\
\leq&C_{13}\int^{t}_{t_{0}}\left[1+(t-s)^{-\frac{1}{2}-\frac{N}{2}\times\frac{1}{p}}\right]e^{-\lambda_1(t-s)}\parallel  u(\cdot,s)\parallel_{L^{\frac{qp}{q-p}}(\Omega)}\parallel h(\cdot,s) \parallel_{L^{q}(\Omega)}ds
\end{align}
for all $t\in(0,T_{max})$. Writing
\begin{equation*}
M(T):=\max_{t\in[0,T]}\parallel u(\cdot,t)\parallel_{L^{\infty}(\Omega)} \quad\mbox{for all}~~t\in(0,T_{max})\ \ \mbox{and}\ \ T\in(0,T_{max}).
\end{equation*}
Relying on \eqref{Le3.751} to see that for any $T>0$,
\begin{align}\label{3.753}
\parallel  u(\cdot,s)\parallel_{L^{\frac{qp}{q-p}}(\Omega)}
\leq \parallel u(\cdot,s)\parallel^{\theta}_{L^{\infty}(\Omega)}\parallel u(\cdot,s)\parallel^{{1-\theta}}_{L^1(\Omega)}
\leq C_{12}^{1-\theta}M^{\theta}(T) \quad\mbox{for all}~~s\in(0,T)
\end{align}
with {$\theta=\frac{pq-q+p}{pq}\in(0,1)$}. Combining {\eqref{Le3.750}}, \eqref{3.752} with \eqref{3.753}, we obtain
 \begin{align}\label{3.754}
&\int^{t}_{t_{0}}\parallel e^{(t-s)\Delta}\nabla\cdot[u(\cdot,s)h(\cdot,s)]\parallel_{L^{\infty}(\Omega)}ds\nonumber\\
\leq&C_{11}C_{13}C_{12}^{1-\theta}M^{\theta}(T)\int^{t}_{t_{0}}\left[1+(t-s)^{-\frac{1}{2}-\frac{N}{2}\times\frac{1}{p}}\right]
e^{-\lambda_1(t-s)}ds\nonumber\\
\leq&C_{11}C_{13}C_{12}^{1-\theta}C_{14}M^{\theta}(T)\quad\mbox{for all}~~t\in(0,T_{max})
\end{align}
 with
 $$C_{14}=\int^{\infty}_{0}[1+(t-s)^{-\frac{1}{2}-\frac{N}{2}\times\frac{1}{p}}]e^{-\lambda_1(t-s)}.$$
Collecting \eqref{Le3.75}, \eqref{3.755}, \eqref{Le3.77} and \eqref{3.754}.
Thus, we conclude that
\begin{equation}
M(T)\leq C_4+C_5+C_{11}C_{13}C_{12}^{1-\theta}C_{14}M^{\theta}(T) \quad\mbox{for all}~~t\in(0,T_{max}).
\end{equation}
Since $\theta<1$, we have
\begin{equation}
M(T)\leq \max \left\{\big(\frac{C_4+C_5}{C_{11}C_{13}C_{12}^{1-\theta}C_{14}}\big)^{\frac{1}{\theta}},(2C_{11}C_{13}C_{12}^{1-\theta}C_{14})
^{\frac{1}{1-\theta}}\right\} \quad\mbox{for all}~~t\in(0,T_{max}).
\end{equation}
Summing up, the proof is completed.
\end{proof}

\section{The proof of Theorem \ref{theorem}}

Now, we are in a position to prove Theorem \ref{theorem}, based on the estimates of solutions for system \eqref{origin}, which were obtained in the above sections.

\textbf{The proof of Theorem 1.1.} In view of the extensibility criterion in Lemma \ref{Le3}, we can assert $T_{max}=\infty$ according to the estimates of Lemma \ref{Le3.7}, Lemma \ref{Le3.3010} and Lemma \ref{Le3.9997}, hence the global boundedness properties in Theorem \ref{theorem} hold.

\section*{Acknowledgment}
 The authors would like to thank the anonymous referees whose comments help to improve the contain of this paper. The authors J. Zheng and K. Li are supported by the Natural Science Foundation of Shandong Province (No.ZR2022JQ06) and the third author K. Li are supported by Natural Science Foundation of China (No.12101534) and the Natural Science Foundation of Shandong Province (No.ZR2021QA052).

\end{document}